  \def\setupbib{\catcode`@=\active}
\def\gatherkey#1#2{\gatherkeyaux{#1}#2\gatherkeyaux}
\def\gatherkeyaux#1#2,#3\gatherkeyaux{\bib{#2}{#1}{#3}}  
 \DeclareMathOperator{\N}{N}
\newcommand{\R}{{\mathbb{R}}}
\newcommand{\Q}{{\mathbb{Q}}}
\renewcommand{\Re}{\operatorname{Re}}
\renewcommand{\Im}{\operatorname{Im}}
\renewcommand{\b}{\beta}
\newcommand{\g}{\gamma}
\newtheorem{theorem}{Theorem}[section]
 \newtheorem{corollary}[theorem]{Corollary}
 \newtheorem{lemma}[theorem]{Lemma}
 \newtheorem{proposition}[theorem]{Proposition}
\theoremstyle{remark}
\newtheorem*{remark}{Remark}
\numberwithin{equation}{section}
\begin{document}

\title[Zero-free regions for Dedekind zeta-functions]{New zero-free regions for Dedekind zeta-functions\\at small and large ordinates}

\author[S.~Das]{Sourabhashis Das}
\address[Sourabhashis Das]{Department of Pure Mathematics\\
University of Waterloo\\
Ontario, Canada}
\email{s57das@uwaterloo.ca}
\urladdr{\url{https://souravasis.wixsite.com/sourabhdas}}

\author[S.~Gaba]{Swati Gaba}
\address[Swati Gaba]{Department of Mathematics\\
University of Connecticut, US}
\email{swati.gaba@uconn.edu}
\urladdr{\url{https://math.uconn.edu/person/swati-gaba/}}

\author[E.~S.~Lee]{Ethan Simpson Lee}
\address[Ethan Simpson Lee]{University of Bristol, School of Mathematics, Fry Building, Woodland Road, Bristol, BS8 1UG, UK} 
\email{ethan.lee@bristol.ac.uk}
\urladdr{\url{https://sites.google.com/view/ethansleemath/home}}

\author[A.~Savalia]{Aditi Savalia}
\address[Aditi Savalia]{Department of Mathematics\\
Indian Institute of Technology Bombay\\
Mumbai, India}
\email{aditis@math.iitb.ac.in, aditisav1995@gmail.com}
\urladdr{\url{https://sites.google.com/view/aditisavalia/home}}

\author[P.-J.~Wong]{Peng-Jie Wong}
\address[Peng-Jie Wong]{Department of Applied Mathematics\\
National Sun Yat-Sen University\\
Kaohsiung City, Taiwan}
\email{pjwong@math.nsysu.edu.tw}
\urladdr{\url{https://www-math.nsysu.edu.tw/~pjwong/}}

\thanks{This research was partially supported by the NSTC grant 111-2115-M-110-005-MY3 of PJW. ESL thanks the Heilbronn Institute for Mathematical Research for their support. SD was supported by a University of Waterloo graduate fellowship. SG thanks the Department of Mathematics at the University of Connecticut for partial funding. All of the authors thank the organisers of the IPENT summer school, who made this project possible.}

\begin{abstract}
Given a number field $L\neq \mathbb{Q}$, 
we obtain new and explicit zero-free regions for Dedekind zeta-functions of $L$, which refine the previous works of Ahn--Kwon, Kadiri, and Lee. In particular, for low-lying zeros, we extend Kadiri's result to all number fields while improving the main constant.
\end{abstract}

\subjclass[2010]{11R42}

\keywords{Dedekind zeta-functions, zero-free regions, explicit formulae}

\maketitle

\section{Introduction}

Suppose throughout that $L$ is a number field with degree $n_L\ge 2$ and discriminant $\Delta_L$; the absolute discriminant of $L$ will be denoted by $d_L = |\Delta_L|$. Let $\zeta_L(s)$ be the Dedekind zeta-function associated to $L$. 

The error term in the Chebotar\"{e}v Density Theorem (CDT), a vast generalisation of the prime number theorem, the prime number theorem for arithmetic progressions, and the prime ideal theorem, is closely connected to a certain sum over the {non-trivial} zeros $\varrho = \beta + i\gamma$, with $0 < \beta < 1$, of $\zeta_L(s)$ via an explicit formula. Recall that a \textit{zero-free region} for $\zeta_L(s)$ is a region in the critical strip $\{s \in\Bbb{C} : 0 < \Re{s} < 1\}$ that contains no non-trivial zeros. In fact, zero-free regions for $\zeta_L(s)$ are central ingredients that enable one to prove effective bounds for the error in the CDT; see \cites{DasThesis, lagarias1977effective} for further details. In fact, any refinements one can make to the zero-free regions for $\zeta_L(s)$ will automatically improve the error in the CDT, among other consequences. For example, \cites{ahn2019explicit, lagarias1979bound, KadiriWong} gave other applications of zero-free regions for $\zeta_L(s)$. In light of these, the purpose of this article is to update the latest explicit zero-free regions for $\zeta_L(s)$. 

There are infinitely many {non-trivial} zeros $\varrho = \beta + i\gamma$ of $\zeta_L(s)$ and the Generalised Riemann Hypothesis (GRH) postulates that every $\varrho$ satisfies $\beta = \frac{1}{2}$. Platt and Trudgian \cite{PlattTrudgianRH} have verified that if $L = \Q$, then the GRH in this setting, namely the {Riemann Hypothesis} (RH), is valid for the region $|\gamma|\leq 3\cdot 10^{12}$. Furthermore, Tollis \cite{Tollis} has verified that the GRH is true for $\zeta_L(\sigma + it)$ in the region $|t|\leq 92$ for number fields $L$ with $n_L=3$ and $d_L\le 239$, and in the region $|t|\leq 40$ for number fields $L$ with $n_L=4$ and $d_L\leq 320$. In general, computations verifying the GRH are limited and only available in some special cases. 

To obtain a deeper understanding about the non-trivial zeros of $\zeta_L(s)$ that fall outside of these regions where the GRH has been verified, we introduce several new zero-free regions for $\zeta_L(s)$ when $L\neq\Q$ (equivalently $n_L\geq 2$). In particular, we establish separate zero-free regions for the non-trivial zeros $\varrho = \beta + i\gamma$ of $\zeta_L(s)$ when $|\gamma| > 1$, $0 < |\gamma | \leq 1$, and $\gamma = 0$.

\begin{remark}
We restrict our attention to number fields $L$ such that $n_L \geq 2$, since $n_L = 1$ implies $L = \Q$, and $\zeta_L(s) = \zeta(s)$ is the Riemann zeta-function, whose zeros have been well-studied. For example, the lowest-lying zero of $\zeta(s)$ is $\frac{1}{2} + 14.13472\ldots i$ and de la Vall{\'e}e Poussin \cite{ValeePoussin} famously proved that $\zeta(\sigma + it)\neq 0$ in the region $t\geq T$ and
\begin{equation}\label{eqn:dlvp}
    \sigma \geq 1-\frac{1}{R\log t},
\end{equation}
where $T$ and $R$ are positive constants. This is commonly referred to as the classical zero-free region for $\zeta(s)$. Over the years, many authors (including Westphal \cite{Westphal}, Ste\v{c}kin \cite{Stechkin1970}, Rosser--Schoenfeld \cite{RosSch}, Kondrat'ev \cite{Kondratev}, Kadiri \cite{kadiri2005region}, and Mossinghoff--Trudgian \cite{MossinghoffTrudgian2015}) have refined these values of $R$. Mossinghoff, Trudgian, and Yang \cite{MossinghoffTrudgianYang} established the latest admissible value for $R$ in \eqref{eqn:dlvp}; these values being $R = 5.558691$ and $T=2$. 
\end{remark}



\subsection*{Zeros with large ordinate}

For $n_L \geq 2$, we can compute absolute constants $(C_1, C_2, C_3, C_4, T)$ such that $\zeta_L(\sigma + it)\neq 0$ in the region
    \begin{equation}\label{eqn:dlvp_nf}
        \sigma \geq 1 - \frac{1}{C_1\log d_L + C_2 n_L\log |t|+ C_3 n_L + C_4} \quad\text{and}\quad
        |t|\geq T .
    \end{equation}
Historically, Lagarias and Odlyzko generalised de la Vall\'{e}e Poussin's approach into the number field setting in \cite{lagarias1977effective}, which requires a non-negative, even, trigonometric polynomial. However, their proof did not use Ste\v{c}kin's key idea from \cite{Stechkin1970} or attempt to find a more favourable trigonometric polynomial. Kadiri generalised Ste\v{c}kin's approach into the number field setting in \cite{kadiri2012explicit} to establish \eqref{eqn:dlvp_nf} with the values $$(C_1, C_2, C_3, C_4, T) = (12.55, 9.69, 3.03, 58.63,1).$$ Further, by choosing a more favourable trigonometric polynomial, inserting new parameters into Kadiri's method, and refining important bounds for certain gamma factors, Lee (the third-named author) proved in \cite{Lee21} that \eqref{eqn:dlvp_nf} is true with the refined constants 
\begin{equation*}
    (C_1,C_2,C_3,C_4,T) = (12.2411, 9.5347, 0.05017, 2.2692,1) .
\end{equation*}
Our first result is presented in Theorem \ref{thm:large_ordinates}, which refines these computations for number fields $L$ with degree $n_L \geq 3$. Most of the refinement in Theorem \ref{thm:large_ordinates} comes from two sources: 
\begin{enumerate}
    \item[(a)] A new bound, Lemma \ref{lem:McCurley_L2_refined}, for  certain gamma factors that refines \cite[Lem.~2]{Mcc}.
    \item[(b)] New choices of non-negative, even, trigonometric polynomials in the method. 
\end{enumerate}
 
\begin{theorem}\label{thm:large_ordinates}
If $L$ is a number field of degree $n_L \geq 3$, then $\zeta_L(\sigma + it)$ is non-zero in the region \eqref{eqn:dlvp_nf} with $$(C_1, C_2, C_3, C_4, T) = (12.21124, 9.54177, -11.59548, 4.57803, 1).$$ 
If $n_L\geq n_0\geq 3$, then further computations (for any $n_0 \leq 21$) are presented in Tables \ref{tab:case1_results_16}, \ref{tab:case1_results_40}, and \ref{tab:case1_results_46}.
\end{theorem}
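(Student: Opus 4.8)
The plan is to run the explicit‑formula argument of Kadiri \cite{kadiri2012explicit} and Lee \cite{Lee21}, feeding in Lemma~\ref{lem:McCurley_L2_refined} for the archimedean factors and a more efficient auxiliary trigonometric polynomial. Write the completed Dedekind zeta‑function as $\xi_L(s)=s(s-1)\,d_L^{s/2}\gamma_L(s)\zeta_L(s)$, where $\gamma_L(s)$ is the product of $r_1$ copies of $\Gamma_\R(s)=\pi^{-s/2}\Gamma(s/2)$ and $r_2$ copies of $\Gamma_\C(s)=2(2\pi)^{-s}\Gamma(s)$ with $r_1+2r_2=n_L$. Comparing the logarithmic derivative of the Hadamard product of $\xi_L$ with the identity $\tfrac{\xi_L'}{\xi_L}(s)=\tfrac1s+\tfrac1{s-1}+\tfrac12\log d_L+\tfrac{\gamma_L'}{\gamma_L}(s)+\tfrac{\zeta_L'}{\zeta_L}(s)$, and using $\Re B_L=-\sum_\varrho\Re(1/\varrho)$, yields
\[
-\Re\frac{\zeta_L'}{\zeta_L}(s)=\tfrac12\log d_L+\Re\frac1s+\Re\frac1{s-1}+\Re\frac{\gamma_L'}{\gamma_L}(s)-\sum_\varrho\Re\frac1{s-\varrho}
\]
away from the zeros and the pole. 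Every summand $\Re\frac1{s-\varrho}$ is positive when $\Re s>1>\beta$, so I may discard all zeros except a putative offending zero $\varrho_0=\beta+i\gamma$ with $|\gamma|\ge T=1$ and its images $\bar\varrho_0=\beta-i\gamma$, $1-\varrho_0$ and $1-\bar\varrho_0$ under conjugation and the functional equation.

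Next I fix a non‑negative even trigonometric polynomial $P(\theta)=a_0+\sum_{k=1}^{K}a_k\cos(k\theta)\ge0$ with $a_k\ge0$ and $a_1>a_0$, and exploit the non‑negativity of
\[
\sum_{k=0}^{K}a_k\Bigl(-\Re\frac{\zeta_L'}{\zeta_L}(\sigma+ik\gamma)\Bigr)=\sum_{\mathfrak{a}}\frac{\Lambda_L(\mathfrak{a})}{\N\mathfrak{a}^{\sigma}}\,P(\gamma\log\N\mathfrak{a})\ge0,\qquad\sigma>1 .
\]
Substituting the previous display into each summand: the pole gives $\tfrac{a_0}{\sigma-1}$ at $k=0$; at $k=1$ the zero $\varrho_0$ gives $-\tfrac{a_1}{\sigma-\beta}$ and its image $1-\bar\varrho_0=1-\beta+i\gamma$ gives the non‑negligible $-\tfrac{a_1}{\sigma-1+\beta}$ (the $i\gamma$ cancels, which is Kadiri's Ste\v{c}kin‑type saving), while the remaining images contribute only negative terms which I drop; the discriminant contributes $\tfrac12(\log d_L)\sum_k a_k$; and what must be bounded from above is $\sum_k a_k\Re\tfrac{\gamma_L'}{\gamma_L}(\sigma+ik\gamma)$ together with the innocuous $\sum_k a_k\Re\tfrac1{\sigma+ik\gamma}=O(1)$ and $\sum_{k\ge1}\tfrac{a_k(\sigma-1)}{(\sigma-1)^2+k^2\gamma^2}=O(1/|\gamma|)$. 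Here I invoke Lemma~\ref{lem:McCurley_L2_refined} to bound $\Re\tfrac{\gamma_L'}{\gamma_L}(\sigma+it)$ by an expression of the shape $\tfrac{n_L}{2}\log|t|+c\,n_L+c'$, uniformly over signatures $(r_1,r_2)$ with $r_1+2r_2=n_L$, and use $n_L\ge3$ to absorb the surviving $O(1/n_L)$ terms; collecting everything produces a single quantity $M=M(\sigma)$ of the form $C_1\log d_L+C_2 n_L\log|\gamma|+C_3 n_L+C_4$.

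Writing $\sigma=1+\delta$ and $\beta=1-\lambda$, the accumulated inequality takes the form $\tfrac{a_1}{\delta+\lambda}+\tfrac{a_1}{\delta+1-\lambda}\le\tfrac{a_0}{\delta}+M$; clearing denominators gives $\lambda(1-\lambda)\ge\tfrac{a_1\delta(2\delta+1)}{a_0+M\delta}-\delta(\delta+1)$, and optimising over $\delta$ (which ends up of size $\asymp 1/M$) gives $\lambda\ge c_0/M$ with $c_0=c_0(a_0,a_1,\dots)$ --- equal to $(\sqrt{a_1}-\sqrt{a_0})^2$ at leading order for the classical polynomial, but a higher‑degree choice of $P$ enlarges the ratio $c_0/\sum_k a_k$ that ultimately governs $C_1$. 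This is exactly $\beta\le1-1/M$ with $M$ as above; reading off the admissible $(C_1,C_2,C_3,C_4)$ as explicit functions of the $a_k$, of $\delta$, and of the constants in Lemma~\ref{lem:McCurley_L2_refined}, and then numerically minimising $C_1$ over even non‑negative trigonometric polynomials of moderate degree (keeping $T=1$ and $C_2,C_3,C_4$ acceptable) produces the stated tuple. Strengthening the hypothesis to $n_L\ge n_0$ permits absorbing terms of relative size $O(1/n_0)$ into sharper constants, which is the source of Tables~\ref{tab:case1_results_16}, \ref{tab:case1_results_40} and~\ref{tab:case1_results_46}.

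The main obstacle is the archimedean bookkeeping. One needs the bound for $\Re\tfrac{\gamma_L'}{\gamma_L}(\sigma+ik\gamma)$ to be sharp simultaneously in the $\log|\gamma|$, $n_L$ and absolute scales and valid across all admissible signatures, and the signature that optimises the $\log|\gamma|$‑coefficient need not optimise the additive constants; the large negative value $C_3=-11.59548$ reflects the need to retain cancellations --- between the $-\tfrac{n_L}{2}\log\pi$ and $-r_2\log2$ contributions and the digamma values --- that the cruder estimate of \cite[Lem.~2]{Mcc} discards, which is precisely what Lemma~\ref{lem:McCurley_L2_refined} is built to supply. With that refinement in hand, the remaining work is the constrained numerical optimisation over $P$, which is routine but must be carried out with verified arithmetic to certify the printed digits.
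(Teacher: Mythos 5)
Your outline reproduces the de la Vall\'ee Poussin--Lagarias--Odlyzko skeleton, but it omits the one device on which the numerical claim actually rests: the Ste\v{c}kin--Kadiri smoothing. In the paper the non-negative quantity is not $\sum_k a_k\bigl(-\Re\frac{\zeta_L'}{\zeta_L}(\sigma+ik\gamma)\bigr)$ but
\[
\sum_{k=0}^{n} a_k\,\Bigl(-\Re\Bigl(\tfrac{\zeta_L'}{\zeta_L}(\sigma+ik\gamma)-\kappa\,\tfrac{\zeta_L'}{\zeta_L}(\sigma_1+ik\gamma)\Bigr)\Bigr)\ge 0,
\qquad \kappa=\tfrac{1}{\sqrt5},\quad \sigma_1=\tfrac{1+\sqrt{1+4\sigma^2}}{2},
\]
which is still non-negative because $N\mathfrak{p}^{-\sigma}-\kappa N\mathfrak{p}^{-\sigma_1}\ge 0$, and because $F(s,\varrho)-\kappa F(s',\varrho)\ge 0$ one may still discard all but the offending zero. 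The payoff is that the discriminant term enters as $\frac{1-\kappa}{2}(\log d_L)\sum_k a_k$ and the archimedean term as $\frac{1-\kappa}{2}n_L\log|t|\sum_{k\ge1}a_k$, i.e.\ the \emph{leading coefficients} are multiplied by $1-\kappa\approx 0.553$. In your version the discriminant contributes $\frac12(\log d_L)\sum_k a_k$, and with the Mossinghoff--Trudgian--Yang polynomial $p_{46}$ (for which $\sum_k a_k\approx 4.57$ and $(\sqrt{a_1}-\sqrt{a_0})^2\approx 0.1035$) your scheme yields $C_1\approx \frac{\frac12\sum a_k}{(\sqrt{a_1}-\sqrt{a_0})^2}\approx 22$ and $C_2\approx 17$, not $12.21124$ and $9.54177$; no choice of non-negative even trigonometric polynomial can close that factor, since the ratio $(\sqrt{a_1}-\sqrt{a_0})^2/\sum_k a_k$ is already essentially optimised by these polynomials. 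The term you label ``Kadiri's Ste\v{c}kin-type saving'' --- keeping $-\frac{a_1}{\sigma-1+\beta}$ from the reflected zero --- is only an $O(1)$ additive gain and cannot reduce the coefficient of $\log d_L$; the paper in fact uses only $S_1\le -\frac{a_1}{\sigma-\beta_0}$ in this case.

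A consequential secondary point: Lemma~\ref{lem:McCurley_L2_refined} is a statement about the difference $\Re\bigl(\frac{\Gamma'}{\Gamma}(\frac{s_k+\delta}{2})-\kappa\frac{\Gamma'}{\Gamma}(\frac{s_k'+\delta}{2})\bigr)$, so it cannot be ``invoked to bound $\Re\frac{\gamma_L'}{\gamma_L}(\sigma+it)$'' on its own as you propose; it only plugs into the smoothed quantity $S_4=\sum_k a_k\Re\bigl(\frac{\gamma_L'}{\gamma_L}(s_k)-\kappa\frac{\gamma_L'}{\gamma_L}(s_k')\bigr)$. Finally, the paper must also couple the choice of $\sigma-1=r(1-\beta_0)$, $r=\sqrt{a_0}/(\sqrt{a_1}-\sqrt{a_0})$, with an $\varepsilon$ satisfying $\varepsilon>\bigl(\frac{c_1}{M}\log d_{\min}(n_0)+\frac{c_3}{M}n_0+\frac{c_4}{M}\bigr)^{-1}$ so that the assumption $\beta_0>1-\varepsilon$ is self-consistent with $T=1$; this is where the minimum-discriminant table and the hypothesis $n_L\ge 3$ actually enter, rather than through ``absorbing $O(1/n_L)$ terms'' as you suggest. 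Reinstating the $\kappa$-smoothing (and then rerunning your optimisation) is not a cosmetic fix but the core of the proof; without it the stated tuple is unreachable.
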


\begin{remark}
It is not unreasonable for $C_3$ to be negative. To see this, observe that if $n_L = 3$, then $\log{d_L} \geq \log{23}$ and $12.21124 \log{23} - 11.59548\cdot 3 > 3.50183$. Indeed, our refinements are designed to ``waste'' as little as possible, and this negative constant demonstrates that there is very little waste in our method (especially by comparison to previous results). 
\end{remark}

\begin{remark}
If $L$ is a number field of degree $n_L = 2$, then there exists a Dirichlet character $\chi_{L}$ modulo $d_L$ such that $\zeta_L(s)=\zeta(s)L(s,\chi_{L})$, where $L(s,\chi_{L})$ is the Dirichlet $L$-function attached to $\chi_{L}$. By extending the above-mentioned works of Ste\v{c}kin and McCurley, Kadiri \cite{KadiriPhD, KadiriLFunctions} further introduced an extra ``smoothing'' to establish that if $\chi$ is a Dirichlet character modulo $q$, then $L(s,\chi)$ has at most one zero in the region
\begin{equation*}
    \Re{s}\geq 1 - \frac{1}{\mathcal{R}\log(q\max\{1,|\Im(s)|\})} ,
\end{equation*}
with $\mathcal{R}=6.44$ (for all $q\geq 3$) and $\mathcal{R}=5.60$ (for any $3\leq q\leq 400\,000$). Applying this with $q=d_L$, we see that for $n_L=2$, $\zeta_L(\sigma + it)$ has at most one zero in the region
\begin{equation*}
    \sigma \ge 1- \frac{1}{\mathcal{R}\log(d_L\max\{1,|t|\})}.
\end{equation*}
\end{remark}

\subsection*{Zeros with small ordinate} 

If $n_L \geq 2$, then we can compute absolute constants $A>0$ and $A'>0$ such that $\zeta_L(\sigma + it)\neq 0$ in the region
\begin{equation}\label{eqn:dlvp_nf_llz}
    \sigma \geq 1 - \frac{1}{A\log d_L + A' n_L\log (|t|+2)} \quad\text{and}\quad
    |t|\leq 1,
\end{equation}
with the exception of at most one real zero $\beta_1$. This zero, if it exists, is called an \textit{exceptional} zero. 
If $d_L$ is sufficiently large, then Kadiri proved in \cite[Thm.~1.1]{kadiri2012explicit} that $A=12.74$ and $A'=0$ are admissible for the region \eqref{eqn:dlvp_nf_llz}; Lee (the third-named author) refined these computations to $A=12.44$ and $A'=0$ in \cite[Thm.~2]{Lee21}. For all number fields $L\neq \Bbb{Q}$, Ahn and Kwon proved that $\zeta_L(\sigma + it)\neq 0$ in the region \eqref{eqn:dlvp_nf_llz} with $A=A'=29.57$ in \cite[Prop.~6.1]{ahn2019explicit}.

Our next result (Theorem \ref{thm:for|t|<1}) refines the computations in \cite{kadiri2012explicit, Lee21} and removes the ``sufficiently large'' condition that was previously enforced when $A' = 0$. These improvements originated from the above-mentioned sources (a) and (b), as well as refinements to the optimisation process, which is necessary to balance certain parameters that naturally appear in the calculations.  

\begin{theorem}\label{thm:for|t|<1}
If $L$ is a number field of degree $n_L \geq 2$, then $\zeta_L(\sigma + it)$ has at most one zero, namely the exceptional zero (if it exists), in the region \eqref{eqn:dlvp_nf_llz}, with $A' = 0$, and $A$ is given by
\begin{center}
\begin{tabular}{|c|c|c|c|c|c|c|}
\hline
    $n_L$ & $2$ & $3$ & $4$ & $5$ & $6$ & $\geq  7$ \\
    \hline
    $A$ & $16.01983$ & $19.55293$ & $16.72207$ & $13.71235$ & $11.78180$ & $11.51910$ \\
    \hline
\end{tabular} 
\end{center}
\end{theorem}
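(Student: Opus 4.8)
The plan is to follow the Stečkin--Kadiri--Lee framework specialized to small ordinates $|t|\le 1$, but with the refined gamma-factor estimate of Lemma~\ref{lem:McCurley_L2_refined} and an optimized trigonometric polynomial. First I would set up the usual logarithmic-derivative identity: for a non-negative, even trigonometric polynomial $P(\theta) = \sum_{k=0}^m a_k\cos(k\theta)$ with $a_k\ge 0$, $a_1 > a_0 \ge 0$, and $P(\theta)\ge 0$, one forms the weighted sum $\sum_{k=0}^m a_k\, \Re\!\left(-\frac{\zeta_L'}{\zeta_L}(\sigma + ik t)\right)$ (here with $t$ chosen so that $kt$ stays near the ordinate $\gamma$ of a putative zero $\beta + i\gamma$). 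Using the Hadamard product / partial fraction expansion of $\zeta_L'/\zeta_L$, the contribution of each non-trivial zero is non-negative term-by-term because $\Re\frac{1}{s-\varrho} \ge 0$ when $\Re s > \beta$; isolating the single zero closest to the line $\Re s = 1$ (and, since $|\gamma|\le 1$, also its conjugate and possibly a nearby real zero, which is where the ``at most one exceptional zero'' clause enters) gives the key inequality after dropping the remaining non-negative zero-terms.

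Next I would bound the ``background'' terms: the pole of $\zeta_L(s)$ at $s=1$ contributes a term of the shape $\frac{a_0 + a_1}{\sigma - 1}$ (roughly), and the archimedean and discriminant data enter through $\log d_L$ and the gamma factors $\frac{\Gamma'}{\Gamma}$ evaluated at the relevant points. This is exactly where sources (a) and (b) do their work: Lemma~\ref{lem:McCurley_L2_refined} replaces the cruder bound from \cite[Lem.~2]{Mcc} for the $\Gamma$-factor sums, shaving the constant multiplying $n_L$, and a carefully chosen $P(\theta)$ (different for each small degree $n_L \in \{2,3,4,5,6\}$ and a uniform choice for $n_L \ge 7$) optimizes the ratio of coefficients that ultimately determines $A$. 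Assembling these estimates yields an inequality of the form
\begin{equation*}
    0 \le -\,\frac{c_1(P)}{1-\beta} + c_2(P)\,\bigl(A_0 \log d_L + A_0' n_L \log(|t|+2)\bigr) + (\text{lower order}),
\end{equation*}
valid away from the exceptional zero; setting $\beta = 1 - \tfrac{1}{A\log d_L}$ and solving for the threshold $A$ forces the stated constants, after the parameters ($\sigma$, the evaluation offsets, and any smoothing parameter) are tuned numerically.

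The main obstacle I anticipate is the joint optimization: one must simultaneously choose the trigonometric polynomial $P$, the real parameter $\sigma > 1$ at which the logarithmic derivative is evaluated, and the auxiliary offsets controlling how $kt$ interacts with the ordinate $\gamma$, all while keeping the error terms (particularly those depending on $n_L$ through the gamma factors and on $\log d_L$ through Minkowski-type lower bounds for $d_L$ in terms of $n_L$) genuinely under control uniformly. Because $d_L$ is no longer assumed ``sufficiently large,'' one cannot absorb constants freely into lower-order terms, so the bookkeeping must be done honestly for each small $n_L$ using the sharp discriminant bounds (e.g. $d_L \ge 3$ for $n_L = 2$, $d_L \ge 23$ for $n_L = 3$, etc.), which is precisely what produces the non-monotone table of values of $A$. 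I would organize the proof so that the degree-dependent discriminant bounds and the choice of $P$ are the only inputs that vary, isolating the numerical optimization into a single lemma whose output is the table.
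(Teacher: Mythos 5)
Your sketch captures the right general framework (explicit formula, non-negative trigonometric polynomial, refined gamma-factor bounds, per-degree discriminant data and numerical optimisation), but it has two genuine gaps. First, the single inequality you display, with the pole contributing roughly $(a_0+a_1)/(\sigma-1)$ and everything else ``lower order'', is not valid uniformly over $|t|\le 1$. When $\gamma_0 \asymp 1/\mathscr{L}$ (with $\mathscr{L}=\log d_L$ and $\sigma-1=r/\mathscr{L}$), every pole term $\frac{a_k(\sigma-1)}{(\sigma-1)^2+k^2\gamma_0^2}$ is of the same order as $\frac{1}{\sigma-1}$, so the pole contributes essentially $(\sum_k a_k)/(\sigma-1)$, and simultaneously the conjugate zero $\bar\varrho_0$ interferes with every $k$-term at offsets $(k\pm1)\gamma_0$; these are main terms, not errors. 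The paper copes by splitting $0<|\gamma_0|\le 1$ into three ranges governed by two free parameters $d_1,d_2$ (Cases 2--4), using the shift-sensitive bounds of Lemma \ref{Lemma 2.3}, and --- crucially --- abandoning the trigonometric polynomial altogether in the bottom range $0<\gamma_0\le d_1/\mathscr{L}$, where a Stark-type argument using only $f_L(\sigma,0)\ge 0$ and the factor $2$ from the zero plus its conjugate (Section \ref{case4}) beats any polynomial; the table then comes from jointly balancing $\tau_1,\tau_2,\tau_3$ over $r,d_1,d_2$ and the polynomial, not from solving one inequality for $A$. Relatedly, the ``at most one zero'' clause is not obtained by loosely allowing ``a nearby real zero'': it requires the separate two-real-zero argument of Section \ref{ssec:ez_upper_bound}, where one assumes real zeros $\beta_0\le\beta_1$ and uses $\frac{1}{\sigma-\beta_0}+\frac{1}{\sigma-\beta_1}\ge\frac{2}{\sigma-\beta_0}$ to push the smaller one out of the region.

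Second, the sum you actually write, $\sum_k a_k\,\Re\bigl(-\frac{\zeta_L'}{\zeta_L}(\sigma+ikt)\bigr)$, omits the Ste\v{c}kin smoothing that the paper's $f_L$ carries: the difference $-\Re\bigl(\frac{\zeta_L'}{\zeta_L}(s_k)-\kappa\frac{\zeta_L'}{\zeta_L}(s_k')\bigr)$ with $\kappa=1/\sqrt{5}$ and $\sigma_1=\frac{1+\sqrt{1+4\sigma^2}}{2}$. That device is what lowers the coefficient of $\log d_L$ from $\tfrac12\sum_k a_k$ to $\tfrac{1-\kappa}{2}\sum_k a_k$ while keeping each zero's contribution non-negative; without it the stated values of $A$ (e.g.\ $16.01983$ for $n_L=2$) are unreachable. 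You invoke the Ste\v{c}kin--Kadiri--Lee framework by name, but as written the mechanism that actually delivers both the case structure and the numerical constants is missing.
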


Recall that the Deuring--Heilbronn phenomenon roughly asserts that if an exceptional zero $\beta_1$ exists, then the zero-free region for $\zeta_L(s)$ can be enlarged. We refer the interested reader to \cite{KadiriWong} for an explicit statement of the phenomenon as well as the references therein. Inspired by this, the following result describes the strongest available zero-free region that would be true, if an exceptional zero $\beta_1$ exists.

\begin{theorem}\label{thm:exp-zero-ZFR}
Let $L$ be a number field with $n_L \geq 2$ and $\mathscr{L} = \log{d_L}$. If a real exceptional zero $\beta_1$ presents in the region $[1 - \frac{\nu}{\mathscr{L}}, 1 )$, then $\zeta_L(\sigma + it)$ is non-zero in the region \eqref{eqn:dlvp_nf_llz}, except for $\sigma + it=\beta_1$, with $A' = 0$, and with admissible values of $A$ for $\nu = 0.05$ given in 
\begin{center}
\begin{tabular}{|c|c|c|c|c|c|c|c|c|c|}
\hline
 $n_L$ & $2$ & $3$ & $4$ & $5$ & $6$ & $7$ & $ 8$ & $ \ge 9$  \\
    \hline
    $A$ & $2.576574$ & $3.316475$ & $2.851720$ & $2.312646$ & $1.960456$  & $1.855168$ & $1.830414$ & $1.806471$ \\
    \hline
\end{tabular} 
\end{center}
and admissible values of $A$ for $\nu = 0.5$ given in
\begin{center}
\begin{tabular}{|c|c|c|c|c|c|c|c|}
\hline
    $n_L$ & $2$ & $3$ & $4$ & $5$ & $6$ & $7$ & $\geq 8$\\
    \hline
    $A$ & $6.036555$ & $8.253321$ & $6.422941$ & $4.668822$ & $3.659933$ & $3.644519$ & $3.576987$ \\
    \hline
\end{tabular} 
\end{center}
\end{theorem}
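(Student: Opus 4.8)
The plan is to run the smoothed explicit-formula argument of Ste\v{c}kin, McCurley, and Kadiri that underlies Theorem~\ref{thm:for|t|<1}, but to carry the contribution of the exceptional zero $\beta_1$ through the computation instead of discarding it. Since $\beta_1$ is real and, by hypothesis, lies in $[1-\tfrac{\nu}{\mathscr{L}},1)$, its term in the sum over non-trivial zeros is large and positive, and after re-optimising the free parameters this term pushes every other zero $\varrho_0\neq\beta_1$ further to the left; the smaller $\nu$ is, the stronger the gain.

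Concretely, one starts from the Hadamard-type partial fraction formula for $-\Re\frac{\zeta_L'}{\zeta_L}(s)$, which isolates the pole at $s=1$, the conductor term $\tfrac12\mathscr{L}$, the archimedean gamma-factor terms, and $-\sum_{\varrho}\Re\frac{1}{s-\varrho}$, together with the positivity $-\frac{\zeta_L'}{\zeta_L}(\sigma)=\sum_{\mathfrak{n}}\Lambda_L(\mathfrak{n})\N(\mathfrak{n})^{-\sigma}\ge 0$ valid for $\sigma>1$. One inserts Ste\v{c}kin's smoothing kernel so that the relevant series converge, and bounds the archimedean terms uniformly in $d_L$ via Lemma~\ref{lem:McCurley_L2_refined}; this uniformity is exactly what removes the ``sufficiently large $d_L$'' restriction that appeared in earlier work. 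Now suppose a non-trivial zero $\varrho_0=\beta_0+i\gamma_0\neq\beta_1$ with $|\gamma_0|\le 1$ violates \eqref{eqn:dlvp_nf_llz}, i.e.\ $\beta_0\ge 1-\tfrac{1}{A\mathscr{L}}$. Fixing a non-negative even trigonometric polynomial $P(\theta)=\sum_{k=0}^{K}b_k\cos(k\theta)$ with $b_k\ge 0$, one forms the $P$-weighted combination of the formula at $\sigma,\ \sigma\pm i\gamma_0,\dots,\sigma\pm iK\gamma_0$, with $\sigma=1+\tfrac{\delta}{\mathscr{L}}$. The Dirichlet-series side is $\ge 0$; on the arithmetic side one keeps the pole term, the conductor and archimedean cost, the $k=1$ contribution $\tfrac{b_1}{\sigma-\beta_0}$ of $\varrho_0$ (together with the nonnegative contributions of $\overline{\varrho_0}$), and---the new ingredient---the $k=0$ contribution $\tfrac{b_0}{\sigma-\beta_1}\ge\tfrac{b_0\mathscr{L}}{\delta+\nu}$ of $\beta_1$; every other zero is dropped, contributing with the favourable sign. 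When $\gamma_0=0$ the trigonometric combination degenerates, and one instead works at the single point $s=\sigma$ with the two real zeros $\beta_0\le\beta_1$.

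Rearranging produces an inequality of the shape
\[
\frac{b_1}{\sigma-\beta_0}+\frac{b_0}{\sigma-\beta_1}+(\text{further nonnegative terms})\ \le\ \frac{b_0}{\sigma-1}+\mathcal{C}(n_L,\delta,\dots)\,\mathscr{L},
\]
where the left side collects the retained zeros and the right side the pole together with the (conductor plus archimedean) cost, $\mathcal{C}$ being explicit and controlled by Lemma~\ref{lem:McCurley_L2_refined} and by Minkowski's lower bound $\mathscr{L}\gtrsim n_L$ (sharpened for small degrees by the minimal discriminants). Because $\beta_1\ge 1-\tfrac{\nu}{\mathscr{L}}$, the term $\tfrac{b_0}{\sigma-\beta_1}\ge\tfrac{b_0\mathscr{L}}{\delta+\nu}$ nearly cancels the pole term $\tfrac{b_0}{\sigma-1}=\tfrac{b_0\mathscr{L}}{\delta}$ when $\nu$ is small, so after dividing by $\mathscr{L}$ essentially only $\mathcal{C}$ and the small difference $\tfrac{b_0}{\delta}-\tfrac{b_0}{\delta+\nu}$ remain to bound $\tfrac{b_1}{\delta+\lambda}$ from above, where $\beta_0=1-\tfrac{\lambda}{\mathscr{L}}$. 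This gives a lower bound for $\lambda$ in terms of $\delta$, $\nu$, $n_L$, and the $b_k$; the admissible constant is $A=1/\lambda$, and the tabulated values follow by optimising numerically over $\delta$, the smoothing parameters, and the choice of $P$, separately for $\nu=0.05$ and $\nu=0.5$ and for each degree in the tables, with the tabulated bound at the threshold remaining admissible for larger $n_L$ since the optimal constant is decreasing there.

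The main obstacle is this multi-parameter optimisation. Because $A'=0$, the whole dependence on $n_L$ must be absorbed into the single uniform constant $A$, which forces one to control the archimedean gamma-factor contributions very tightly and uniformly in $d_L$---hence the reliance on Lemma~\ref{lem:McCurley_L2_refined}---and to retune the trigonometric polynomial and the kernel parameters for each small degree, the cubic case $n_L=3$ (where $d_L$ can be as small as $23$) being the most delicate. A secondary point is to handle the degenerate real-zero case consistently with the convention that $\beta_1$ denotes the unique exceptional (largest real) zero, and to observe that retaining both $\varrho_0$ and $\beta_1$ in the sum over zeros is legitimate precisely because $\varrho_0\neq\beta_1$.
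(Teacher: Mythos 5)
Your proposal is correct and follows essentially the same route as the paper: the paper's proof likewise reruns the Cases 2--4 machinery behind Theorem \ref{thm:for|t|<1} while retaining the exceptional zero's $k=0$ contribution $F(\sigma,\beta_1)-\kappa F(\sigma_1,\beta_1)\geq \frac{1}{\sigma-\beta_1}\geq \frac{\mathscr{L}}{r+\nu}$ so that it offsets the pole term, yielding the modified quantities $\widehat{\tau}_1,\widehat{\tau}_2,\widehat{\tau}_3$ which are then optimised jointly in $r,d_1,d_2$ (with the polynomial $p_4$), and it treats the real-zero case exactly as you do, via two real zeros $\beta_0\leq\beta_1$ at $s=\sigma$ giving $\widehat{\eta}(n_L,r,\nu)$. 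The only point your sketch compresses is the explicit three-range split in $\gamma_0$ (at $d_1/\mathscr{L}$ and $d_2/\mathscr{L}$) needed to reach the tabulated constants, but you invoke that machinery from Theorem \ref{thm:for|t|<1} at the outset, so this is a presentational rather than a substantive gap.
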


\begin{remark} (i) In fact, when $\nu =0.05$, we show that the slightly better value $A= 1.806301$ is admissible for $n_L\ge 10$ to establish the first table of Theorem \ref{thm:exp-zero-ZFR}.\\
\noindent (ii) Our argument to establish Theorem \ref{thm:exp-zero-ZFR} in Section \ref{section::exceptional} works for the general situation that an exceptional zero $\beta_1$ presents in the region $[1 - \frac{\nu}{\mathscr{L}},1)$ with $\nu\in(0,\frac{1}{2}]$. The choice $\nu= \frac{1}{20}$ is particularly interesting, because it was assumed in \cite[Sec. 3.4]{KadiriWong} to study the least prime problem in the CDT.
\end{remark}

\subsection*{Zeros on the real line}

For $d_L$ being sufficiently large, Kadiri \cite[p.~146]{kadiri2012explicit} established that $\zeta_L(\sigma)$ admits at most one zero in the region 
\begin{equation}\label{eqn:onrealline}
    \sigma \geq  1 -  \frac{1}{A''\log d_L} ,
\end{equation} 
with $A'' = 1.6110$. The next theorem extends her work to all number fields and removes the ``sufficiently large'' condition.

\begin{theorem}\label{thm:onrealline}
If $L$ is a number field with $n_L \geq 2$, then $\zeta_L(\sigma)$ admits at most one real zero in the region \eqref{eqn:onrealline} with $A''$ given as
\begin{center}
\begin{tabular}{|c|c|c|c|c|c|c|}
\hline
    $n_L$ & $2$ & $3$ & $4$ & $5$ & $6$ & $\geq  7$ \\
    \hline
    $A''$ & $1.61094$ & $1.93173$ & $1.88178$ & $1.69958$ & $1.61857$ & $1.61094$ \\
    \hline
\end{tabular} 
\end{center}
\end{theorem}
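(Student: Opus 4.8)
The plan is to run a Stechkin-type argument on the real line, reusing the machinery behind Theorems~\ref{thm:for|t|<1} and~\ref{thm:exp-zero-ZFR} with $t=0$ and with the aim of forbidding a \emph{second} real zero. Suppose, towards a contradiction, that $\zeta_L$ has two real zeros $\beta_1\ge\beta_2$ (counted with multiplicity) in $[1-\tfrac{1}{A''\mathscr{L}},1)$, where $\mathscr{L}=\log d_L$. Fix a non-negative even trigonometric polynomial $P(\theta)=a_0+2\sum_{k=1}^{m}a_k\cos(k\theta)$ with each $a_k\ge0$ and, after normalising, $P(0)=1$; also fix a real abscissa $\sigma=1+\tfrac{u}{\mathscr{L}}>1$ and a shift $\delta=\tfrac{w}{\mathscr{L}}>0$, all to be chosen later. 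Since $-\tfrac{\zeta_L'}{\zeta_L}(s)=\sum_{\mathfrak{n}}\Lambda_L(\mathfrak{n})\,N\mathfrak{n}^{-s}$ with $\Lambda_L\ge0$, the weighted combination
\[
a_0\Bigl(-\tfrac{\zeta_L'}{\zeta_L}(\sigma)\Bigr)+\sum_{k=1}^{m}a_k\Bigl(-\tfrac{\zeta_L'}{\zeta_L}(\sigma+ik\delta)-\tfrac{\zeta_L'}{\zeta_L}(\sigma-ik\delta)\Bigr)=\sum_{\mathfrak{n}}\frac{\Lambda_L(\mathfrak{n})}{N\mathfrak{n}^{\sigma}}\,P\!\left(\delta\log N\mathfrak{n}\right)
\]
is nonnegative.

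Next I would substitute the logarithmic derivative of the Hadamard product for $-\tfrac{\zeta_L'}{\zeta_L}$ into the left-hand side and take real parts. The pole at $s=1$ yields the ``bad'' term $\tfrac{a_0}{\sigma-1}+\sum_{k}\tfrac{2a_k(\sigma-1)}{(\sigma-1)^2+k^2\delta^2}$; the conductor contributes the dominant cost $P(0)\cdot\tfrac12\log d_L$; the archimedean factors (together with the $\tfrac1s$ term) contribute a bounded quantity which, for $n_L\ge2$, is negative of size $O(n_L)$, estimated via the refined gamma-factor bound in Lemma~\ref{lem:McCurley_L2_refined} (source (a)); and the zeros contribute $-\sum_{\rho}(\cdots)$. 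As $a_k\ge0$ and $\sigma>1$, every summand from a complex zero, or from a real zero other than $\beta_1,\beta_2$, is $\le0$ and may be discarded. For the two retained zeros I would bound $\sigma-\beta_i\le\sigma-1+\tfrac{1}{A''\mathscr{L}}$ and use the monotonicity of $\mu\mapsto1/\mu$ and the unimodality of $\mu\mapsto\mu/(\mu^2+c)$ on the interval $(\sigma-1,\sigma-\beta_i]$ (a short case split on the position of $k\delta$) to extract a clean lower bound for their total contribution.

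Dividing by $\mathscr{L}$ and writing $\ell=1/A''$, the hypothesis of two real zeros forces, for every admissible $(u,w,P)$,
\[
2\!\left[\frac{a_0}{u+\ell}+\sum_{k=1}^{m}\frac{2a_k(u+\ell)}{(u+\ell)^2+k^2w^2}\right]\le\frac{a_0}{u}+\sum_{k=1}^{m}\frac{2a_k u}{u^2+k^2w^2}+\frac{P(0)}{2}+\Phi(n_L;u,w),
\]
where $\Phi$ collects the rescaled archimedean and $\tfrac1s$ contributions; bounding $d_L$ below by the Minkowski--Odlyzko estimates in terms of $n_L$ turns $\Phi$ into an explicit function, and this is precisely where the dependence on $n_L$ --- and the non-monotone shape of the final table --- enters. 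It then remains to display, for each $n_L\in\{2,\dots,6\}$ and a single choice valid for all $n_L\ge7$, a polynomial $P$ and parameters $u,w$ that \emph{violate} this inequality at the claimed value $A''=1/\ell$; this is a finite numerical verification, run alongside the optimisations for Theorems~\ref{thm:for|t|<1} and~\ref{thm:exp-zero-ZFR}. (For the few $d_L$ so small that the inequality is not automatically violated, the claim already follows from the numerical verifications of GRH for number fields of small discriminant recalled in the introduction.) Having contradicted the hypothesis, $\zeta_L(\sigma)$ has at most one real zero in the region.

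The crux is the joint optimisation: the non-negative polynomial $P$ (with non-negative coefficients), the abscissa $u$, and the shift $w$ must be balanced against $\Phi$, and $\Phi$ must be controlled \emph{uniformly} over all $(r_1,r_2)$ with $r_1+2r_2=n_L$ and over all $d_L\ge d_L^{\min}(n_L)$. Sharpening the gamma-factor estimate (Lemma~\ref{lem:McCurley_L2_refined}) is what pushes $A''$ down to the stated constants, while the weakness of the discriminant bound for $3\le n_L\le6$ is what keeps the constant there from matching the value obtained for $n_L=2$ and $n_L\ge7$.
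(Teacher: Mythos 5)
Your proposal is structurally a valid Stechkin-type scheme, but it is a genuinely different mechanism from the paper's, and the difference is exactly where the gap lies. The paper's Case 5 argument (Subsection \ref{ssec:ez_upper_bound}) uses \emph{no} trigonometric polynomial: it evaluates the $\kappa$-smoothed quantity $-\Re\big(\frac{\zeta_L'}{\zeta_L}(\sigma)-\kappa\frac{\zeta_L'}{\zeta_L}(\sigma_1)\big)\ge 0$ at the single real point $\sigma$, expands it by \eqref{fL-expansion}, retains the two real zeros via \eqref{eqn:v_important} and $\frac{1}{\sigma-\beta_0}+\frac{1}{\sigma-\beta_1}\ge\frac{2}{\sigma-\beta_0}$, and lands on \eqref{eqn:ub_important1}. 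The whole point of the smoothing is that the conductor coefficient is $\frac{1-\kappa}{2}\approx 0.2764$ rather than $\frac12$; the table values are precisely $\big[\max_{r<0.15\mathscr{L}_0}\frac{r-\frac{1-\kappa}{2}r^2}{1+\frac{1-\kappa}{2}r}\big]^{-1}$ (e.g.\ $A''=1.61094$ at $r\approx 1.4986$). Your version drops the smoothing and pays conductor cost $\frac{P(0)}{2}\mathscr{L}$, hoping the vertical shifts $\sigma\pm ik\delta$ recover the loss; with $P\equiv a_0$ your inequality only yields $A''\approx 2.91$, and the entire burden of closing the gap to $1.61$--$1.93$ is deferred to an unperformed ``finite numerical verification''. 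There is good reason to believe it cannot be closed: writing $\frac{x}{x^2+t^2}=\int_0^\infty e^{-x\xi}\cos(t\xi)\,d\xi$, your requirement for a contradiction becomes $\int_0^\infty\big(2e^{-(u+\ell)\xi}-e^{-u\xi}\big)P(w\xi)\,d\xi>\tfrac{P(0)}{2}$ with $P\ge 0$ having non-negative coefficients, and testing this extremal problem (e.g.\ with Fej\'er-type kernels) caps $\ell$ far below the $0.62$ needed for $n_L=2$ and $n_L\ge 7$. In other words, the $\kappa$--$\sigma_1$ device of Lemma \ref{Lemma2.4}--\ref{Lemma 2.6} is not an optional refinement here; it is the source of the stated constants, and your plan has no substitute for it.

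Two secondary points. First, the $n_L$-dependence (and the non-monotone shape) of the table does not come from controlling the archimedean term: in the paper that term is negative ($D_{n_L}<0$ in \eqref{D0case5}) and is simply discarded. It comes from the technical constraint $1<\sigma<1+\varepsilon\le 1.15$, i.e.\ $r<0.15\log d_{\text{min}}(n_L)$, combined with the discriminant lower bounds \eqref{eqn:TollisEtcLwrBound}; for $3\le n_L\le 6$ this constraint binds (indeed $r=0.15\mathscr{L}_0$ in Table \ref{tab:optimisations_ez}), which is why those rows are worse than $n_L=2$ and $n_L\ge 7$. Your optimisation would need the analogous restriction on $u$, and your explanation of the table's shape misidentifies the mechanism. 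Second, the fallback ``for small $d_L$ the claim follows from the GRH verifications recalled in the introduction'' is not available as stated for $n_L=2$: the quadratic-field input there is Kadiri's zero-free region with $\mathcal{R}=5.60$, a strictly smaller region than $\sigma\ge 1-\frac{1}{1.61094\log d_L}$, not a verification of GRH; in the paper these inputs are used only to justify the lower bounds on $d_L$ that enlarge $\mathscr{L}_0$, not to dispose of small-discriminant fields outright.
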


Our final result demonstrates the impact of the existence of a real exceptional zero $\beta_1$ on the real zeros:

\begin{theorem}\label{thm:onrealline-exceptional}
Let $L$ be a number field with $n_L \geq 2$ and $\mathscr{L} = \log{d_L}$. If a real exceptional zero $\beta_1$ presents in the region $[1 - \frac{\nu}{\mathscr{L}}, 1 )$,  then $\zeta_L(\sigma)$  is non-vanishing in the region \eqref{eqn:onrealline}, except for $\sigma =\beta_1$, 
with admissible $A''$ for $\nu = 0.05$ given in 
\begin{center}
\begin{tabular}{|c|c|c|c|c|}
\hline
    $n_L$ & $2$ & $3$ & $4$ & $\geq 5$ \\
    \hline
    $A''$ & $0.478632$ & $0.483802$ & $0.480747$ & $0.478632$ \\
    \hline
\end{tabular} 
\end{center}
and with $A''$ for $\nu = 0.5$ given in
\begin{center}
\begin{tabular}{|c|c|c|c|c|c|c|}
\hline
    $n_L$ & $2$ & $3$ & $4$ & $5$ & $6$ & $\geq 7$\\
    \hline
    $A''$ & $1.32086$ & $1.86631$ & $1.77210$ & $1.45550$ & $1.33079$ & $1.32086$\\
    \hline
\end{tabular} 
\end{center}
\end{theorem}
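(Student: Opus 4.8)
This is the real-axis ($t=0$) instance of the explicit-formula argument behind Theorems~\ref{thm:exp-zero-ZFR} and~\ref{thm:onrealline}, with the competing zero forced to be real. I would start exactly as for Theorem~\ref{thm:onrealline}: suppose towards a contradiction that $\zeta_L$ has a real zero $\beta_0\neq\beta_1$ with $\beta_0\ge 1-\frac{1}{A''\mathscr{L}}$, while the exceptional zero satisfies $\beta_1\in[1-\frac{\nu}{\mathscr{L}},1)$. Then $\zeta_L$ has (at least) the two real zeros $\beta_0$ and $\beta_1$, and one feeds both of them into the Ste\v{c}kin-type inequality of Section~\ref{section::exceptional}. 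That inequality --- obtained by applying a non-negative smoothing to $\sum_{\mathfrak n}\Lambda_L(\mathfrak n)(N\mathfrak n)^{-\sigma_1}\ge 0$ and then invoking the partial-fraction expansion of $\zeta_L'/\zeta_L$ --- bounds a weighted combination of $\Re\frac{1}{\sigma_1-\varrho}$ over zeros $\varrho$ in terms of the pole term $\frac{1}{\sigma_1-1}$, a main term proportional to $\mathscr{L}$, and archimedean/gamma-factor terms estimated via Lemma~\ref{lem:McCurley_L2_refined} (the refinement of \cite[Lem.~2]{Mcc}). Since $\gamma=0$ for both $\beta_0$ and $\beta_1$, their contributions appear with the full weights $\frac{1}{\sigma_1-\beta_0}$ and $\frac{1}{\sigma_1-\beta_1}$ and everything is evaluated on the real axis, where the gamma-factor estimates are cleanest --- this is part of why the real-line constants improve on the small-ordinate ones. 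One may additionally retain the functional-equation-reflected real zeros $1-\beta_0$, $1-\beta_1$, whose $O(1)$ contributions slightly sharpen the final constant.

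Next I would insert the exceptional-zero hypothesis. Writing $\sigma_1=1+\frac{\delta}{\mathscr{L}}$ and $\beta_0=1-\frac{\lambda}{\mathscr{L}}$ for a free parameter $\delta>0$, the bound $\beta_1\ge 1-\frac{\nu}{\mathscr{L}}$ gives $\frac{1}{\sigma_1-\beta_1}\ge \frac{\mathscr{L}}{\delta+\nu}$; this is precisely the step that upgrades Theorem~\ref{thm:onrealline} (where the second real zero was only known to lie in $[1-\frac{1}{A''\mathscr{L}},1)$, i.e.\ the weaker value ``$\nu=1/A''$'') to the stronger regions claimed here. After dividing by $\mathscr{L}$ and using $\mathscr{L}=\log d_L\ge c(n_L)$ (the Minkowski--Odlyzko discriminant lower bound, which is where the mild $n_L$-dependence --- and its stabilisation for larger $n_L$ --- enters, and which lets us dispense with Kadiri's ``$d_L$ sufficiently large'' hypothesis by keeping the bounded $n_L$-terms explicit rather than absorbing them asymptotically), one is left with a clean inequality of the shape
\[
\frac{\kappa}{\delta+\lambda}\;\le\;\kappa\Bigl(\tfrac{1}{\delta}-\tfrac{1}{\delta+\nu}\Bigr)\;+\;c_0\;+\;\frac{E(n_L)}{\mathscr{L}},
\]
where $\kappa,c_0$ depend only on the chosen smoothing and $E(n_L)$ is the bounded archimedean contribution. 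Solving for $\lambda$ forces $\beta_0\le 1-\frac{1}{A''\mathscr{L}}$, contradicting the assumption, once $A''$ is at least the value produced by this bound.

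The remaining work is the optimisation: for each $n_L\in\{2,3,4,5,6\}$ and for the generic case (and separately for $\nu=0.05$ and $\nu=0.5$), numerically minimise the resulting $A''$ over $\delta>0$ and over the admissible family of smoothings / non-negative trigonometric polynomials supplied by ingredients (a) and (b) of the paper, worst-casing over the allowed range of $d_L$ for that $n_L$. Reading off the optima yields the two tables; the much smaller values for $\nu=0.05$ reflect that then $\tfrac1\delta-\tfrac1{\delta+\nu}$ is tiny, so the pole term is almost entirely cancelled by the exceptional-zero term.

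The main obstacle I anticipate is making the estimate genuinely \emph{uniform} in $d_L$ while still extracting the full benefit of the exceptional zero. The coefficient of $\mathscr{L}$ in the denominator of the zero-free region must be played off against $E(n_L)/\mathscr{L}$, and for small $n_L$ (hence small admissible $d_L$, e.g.\ $d_L\ge 3$ for $n_L=2$, $d_L\ge 23$ for $n_L=3$) the latter is not negligible, so one cannot merely let $d_L\to\infty$; one must verify the worst case is attained at the smallest discriminant and track the exact constants there. Compounding this, for small $\nu$ the optimal $\delta$ and the optimal smoothing differ appreciably from those in Theorem~\ref{thm:onrealline}, so the optimisation must genuinely be redone rather than reused, and the delicate near-cancellation $\tfrac1\delta-\tfrac1{\delta+\nu}$ must be handled without loss of precision. (The degenerate cases --- $\beta_0$ coinciding with $1-\beta_1$, or $\beta_1$ a double zero --- only make the contradiction easier and need no separate treatment.)
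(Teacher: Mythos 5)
Your proposal follows essentially the same route as the paper's Section \ref{ssec:ez_upper_bound_except}: take the Ste\v{c}kin-smoothed quantity $f_L(\sigma,0)\ge 0$ at $t=0$, insert the explicit formula \eqref{fL-expansion}, keep only the two real zeros via \eqref{eqn:v_important}, use $\beta_1\ge 1-\frac{\nu}{\mathscr{L}}$ to lower-bound $\frac{1}{\sigma-\beta_1}$ by $\frac{\mathscr{L}}{r+\nu}$, control the pole and gamma terms by Lemmas \ref{Lemma2.4} and \ref{Lemma 2.6}, and optimise the resulting inequality \eqref{eqn:ub_important1-exception} in $r$; your displayed inequality is \eqref{eqn:ub_important1-exception} up to cosmetic $\kappa$-factors, and your contradiction framing is logically the same as the paper's direct bound $c\ge\widehat{\eta}(n_L,r,\nu)$.

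Two implementation points diverge from what is actually needed to reach the stated constants. Minor: there is no optimisation over trigonometric polynomials here — in the real-zero case the bound is polynomial-independent (only the fixed Ste\v{c}kin parameters $\kappa$ and $\sigma_1$ enter), so that part of your optimisation is vacuous. More seriously: the $n_L$-dependence of the tables does not come from the archimedean term being non-negligible at small discriminants — that term is $D_{n_L}<0$ for every $n_L\ge 2$ and is simply discarded via $\mathcal{U}(D_{n_L})$ in \eqref{eqn:ub_important1-exception} — but from the validity constraint $\sigma\le 1.15$ underlying the constants $\alpha_2$ and $d(0)$, which forces $r\le 0.15\,\mathscr{L}_0$. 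With the Minkowski-type minima you quote ($d_L\ge 3$ for $n_L=2$, $d_L\ge 23$ for $n_L=3$) this cap strangles the optimisation: for $n_L=2$, $\nu=0.5$ one is restricted to $r\le 0.15\log 3\approx 0.165$ and obtains $A''>20$ rather than $1.32086$. The paper instead feeds in the verification-based lower bounds \eqref{eqn:TollisEtcLwrBound} ($d_L\ge 400\,001,\,240,\,321$ for $n_L=2,3,4$); indeed, the tabulated $r$-values for $3\le n_L\le 6$ are exactly $0.15\log d_{\min}$, which is precisely where the $n_L$-dependence (and its stabilisation for $n_L=2$ and $n_L\ge 7$) comes from. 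Without this input the claimed values for small degrees are unreachable; with it, your plan reproduces Table \ref{tab:optimisations_ez_nu=0.5&0.05} and hence the theorem.
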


\subsection*{Structure}


In Section \ref{sec:setup}, we introduce several useful notations and preparatory observations which will be required throughout this paper. The proof of Theorem \ref{thm:large_ordinates} will be given in Section \ref{Case-1-proof}. We will prove Theorem \ref{thm:large_ordinates} in Sections \ref{sec:low_lying_complex_zeros} (for complex zeros) and \ref{upper_bound} (for real zeros).

\section{Set-up and preparatory observations}\label{sec:setup}

Our initial set-up is the same as that used by Kadiri and Lee to prove \cite[Thm.~1.1]{kadiri2012explicit} and \cite[Thm.~1]{Lee21}, respectively, following a similar shape to Ste\v{c}kin's argument in \cite{Stechkin1970}. To begin, recall that the Dedekind zeta function $\zeta_L(s)$ associated to the number field $L$ is defined by 
\begin{equation*}
    \zeta_L (s) = \sum_{\mathfrak{a}\neq 0} \N(\mathfrak{a})^{-s} = \prod_{\mathfrak{p}} (1 -\N(\mathfrak{p})^{-s})^{-1},
\end{equation*}
for $\Re{s}>1$. Here, $\N(\mathfrak{a})$ is the norm of $\mathfrak{a}$, the sum is over non-zero integral ideals of $L$, and the product is over prime ideals of the ring of integers $\mathcal{O}_L$ of $L$. It is known that $\zeta_L (s)$ has an analytic continuation to a meromorphic function on $\Bbb{C}$ with only a simple pole at $s = 1$, and its zeros $\rho= \b + i \g$ encode deep arithmetic information of $L$.
The logarithmic derivative of $\zeta_L$ is 
$$
-\frac{\zeta'_L}{\zeta_L}(s)= \sum_{\mathfrak{a}\neq 0} \frac{\Lambda(\mathfrak{a})}{\N(\mathfrak{a})^s},
$$
where $\Lambda(\mathfrak{a})$ is the number field analogue of the von Mangoldt function:
\begin{equation*}
    \Lambda(\mathfrak{a}) = 
    \begin{cases}
        \log \N(\mathfrak{p}) &\text{if $\mathfrak{a} = \mathfrak{p}^k$ for some prime ideal $\mathfrak{p}$ of $\mathcal{O}_L$ and $k\ge 1$;} \\
        0 &\text{otherwise.}
    \end{cases}
\end{equation*}

Let $r_1$ and $r_2$ be the number of real and complex places (respectively) of $L$, and note that $n_L = r_1 +2r_2$. The completed zeta function $\xi_L(s) $ is
\begin{equation*}\label{def-xi}
\xi_L(s)= s(s-1) d_L^{s/2} \g_L (s) \zeta_L (s),
\end{equation*}
where 
\begin{equation}\label{eqn:gammaL}
    \gamma_L (s) = \Big(\pi^{-\frac{s+1}{2}} \Gamma \Big( \frac{s+1}{2}\Big) \Big)^{r_2} \Big( \pi^{-\frac{s}{2}} \Gamma \Big( \frac{s}{2}\Big)\Big)^{r_1 + r_2}.
\end{equation}
Next, we let $t\in\R$. The following definitions hold for the rest of this paper:
\begin{itemize}\label{2.2}
    \item $\kappa = \frac{1}{\sqrt{5}}$,
    \item $s_k = \sigma + ikt$ such that $k\in \mathbb{N}$, $1<\sigma < 1 + \varepsilon$ for some $0 < \varepsilon \leq 0.15$, and
    \item $s'_k = \sigma_1 + ikt$ such that $\sigma_1 = \frac{1 + \sqrt{1 + 4\sigma^2}}{2}$.
\end{itemize}
For convenience, we will write $\sigma_1(a)$ to denote the value of $\sigma_1$ at $\sigma = a$. To prove our results, we isolate a non-trivial zero $\varrho_0 = \beta_0 + i\gamma_0$ of $\zeta_L(s)$ such that $\beta_0 > 1 - \varepsilon \geq 0.85$, and choose a polynomial $p_n(\varphi)$ from the class $P_n$ of non-negative, even, trigonometric polynomials of degree $n \geq 2$, which is defined by
\begin{equation*}\label{poly}
    P_n:=\Big\{p_n(\varphi)=\sum_{k=0}^n a_k\cos (k\varphi):p_n(\varphi)\geq 0\text{ for all }\varphi\text{, }a_k\geq 0\text{ and }a_0<a_1\Big\}.
\end{equation*}
Now, consider the function
\begin{equation*}
    S(\sigma,t) = \sum_{k=0}^{n} a_kf_L(\sigma, kt),
\end{equation*}
in which
\begin{align*}
    f_L(\sigma,kt)
    &= -\Re\Big(\frac{\zeta'_L}{\zeta_L}(s_k) - \kappa \frac{\zeta'_L}{\zeta_L}({s'_k})\Big)= \sum_{0\neq\mathfrak{p}\subset\mathcal{O}_L} \Lambda(\mathfrak{p})(N(\mathfrak{p})^{-\sigma}-\kappa N(\mathfrak{p})^{-\sigma_1}) \cos(k t\log(N(\mathfrak{p})),\label{eqn:useful1}
\end{align*}
because $\sigma > 1$. With the choices for $\sigma_1$ and $\kappa$ that we made, the non-negativity of $p_n(\varphi)$ implies that $S(\sigma, t) \geq 0$. On the other hand, the explicit formula in \cite[Eqn. (8.3)]{lagarias1977effective} tells us that
\begin{equation}\label{eqn:explicit_formula}
    -\Re\frac{\zeta^{\prime}_{L}}{\zeta_{L}}(s)=-\sum_{\rho\in Z_{L}}\Re\frac{1}{s-\rho}+\frac{\log{d_L}}{2}+\Re\frac{1}{s}+\Re\frac{1}{s-1}+\Re\frac{\gamma^{\prime}_{L}}{\gamma_{L}}(s) ,
\end{equation}
where $\gamma_L$ was defined in \eqref{eqn:gammaL} and $Z_L$ denotes the set of non-trivial zeros of $\zeta_L$. It follows from \eqref{eqn:explicit_formula} that
\begin{equation}\label{eqn:imp}
    0\leq S(\sigma,t)\leq S_1+S_2+S_3+S_4,
\end{equation}
where $F(s,z) = \Re(\frac{1}{s-z}+\frac{1}{s-1+\bar{z}})$, 
\begin{align*}
    S_1 &= -\sum_{k=0}^{n} a_k\sum_{\varrho\in Z_L}\Re\Big(\frac{1}{s_k-\varrho} - \frac{\kappa}{{s'_k}-\varrho}\Big),\\
    S_2 &= \frac{1-\kappa}{2} \Big(\sum_{k=0}^{n} a_k\Big) \log d_L,\\
    S_3 &= \sum_{k=0}^{n} a_k \left(F(s_k,1) -\kappa F({s'_k},1)\right),\text{ and}\\
    S_4 &= \sum_{k=0}^{n} a_k \Re\Big(\frac{\gamma_L'(s_k)}{\gamma_L(s_k)} - \kappa \frac{\gamma_L'({s'_k})}{\gamma_L({s'_k})}\Big).
\end{align*}
We shall denote $\mathscr{L} = \log{d_L}$ throughout, and we will consider the following five cases separately:
\begin{description}
    \item[Case 1] $\gamma_0 > 1$
    \item[Case 2] $\frac{d_2}{\mathscr{L}} < \gamma_0 \leq 1$
    \item[Case 3] $\frac{d_1}{\mathscr{L}} < \gamma_0 \leq \frac{d_2}{\mathscr{L}}$
    \item[Case 4] $0 < \gamma_0 \leq \frac{d_1}{\mathscr{L}}$
    \item[Case 5] $\gamma_0 = 0$
\end{description}
Henceforth, we shall further assume $t = \gamma_0$ (we can use these interchangeably) and in the fifth case, we consider two real zeros $\rho_1 = \beta_1 + i \gamma_1$ and $\rho_2 = \beta_2 + i \gamma_2$ such that $\beta_1 \leq \beta_2$. We consider each case in the upcoming sections. For convenience, we also introduce some intermediary results in Sections \ref{ssec:mindiscs}-\ref{ssec:gammafactors}, which we will require later.

\begin{remark} (i)
The rationale for the split points we have chosen in Cases 1-5 is the following. First, we will use the same method to tackle Cases 1-2, although the ranges affect the appearance of the final outcome. As a result, we obtain $d_2$, which gives the best values in Case 2. Second, we handle Cases 3-4 together while using the value of $d_2$ obtained from Case 2 to find the value of $d_1$ that gives the best maximum values from Cases 3-4. Case 3 employs a distinct method from Cases 1-2, and Case 4 is of the Stark style.\\
\noindent (ii)
Our eventual bounds for Cases 1-3 will depend on the choice of the polynomial, whereas our bounds for Cases 4-5 will not depend on this choice.
\end{remark}

\subsection{Minimum discriminants}\label{ssec:mindiscs}

Suppose that $L$ is a number field with degree $n_L$ and absolute discriminant $d_L$. Table \ref{Table-imp} presents lower bounds for the size of $d_L$ at given choices of $n_L$. These computations are imported from the appendix in \cite{KadiriWong}.

\begin{table}[ht]
\begin{tabular}{|cc|cc|cc|cc|}
    \hline
    $n_L$ & $d_\text{min}(n_L)$ & $n_L$ & $d_\text{min}(n_L)$ & $n_L$ & $d_\text{min}(n_L)$ & $n_L$ & $d_\text{min}(n_L)$ \\
    \hline
    2 & $3$ & 7 &$ 184\,607$ & 12 & $2.74\cdot 10^{10}$  & 17 & $3.70\cdot 10^{16}$    \\
    3 & $23$ & 8 & $1\,257\,728$ &   13 & $7.56\cdot 10^{11}$   & 18 & $2.73\cdot 10^{17}$   \\
    4 & $117$ & 9 & $2.29\cdot 10^7$   & 14 & $5.43\cdot 10^{12}$  & 19 & $9.03\cdot 10^{18}$     \\
    5 & $1\,609$  &  10 & $1.56\cdot 10^8$  & 15 & $1.61\cdot 10^{14}$   & 20 & $6.74\cdot 10^{19}$     \\
    6 & $9\,747$  &11 &$3.91\cdot 10^9$   & 16 &$1.17 \cdot 10^{15}$  & $\geq 21$ & $10^{n_L}$ \\  
    \hline
\end{tabular}
\caption{Computations for $d_{\text{min}}(n_L)$ such that $d_L \geq d_{\text{min}}(n_L)$ for every number field of a given degree $n_L \geq 2$.}
\label{Table-imp}
\end{table}

\subsection{Bounds for gamma factors}\label{ssec:gammafactors}

The following technical lemma refines \cite[Lem.~2]{Mcc}. We will use this to refine the bounds on certain gamma factors that are important.

\begin{lemma}\label{lem:McCurley_L2_refined}
Let $1 <\sigma < 1+\varepsilon$ for some $0<\varepsilon\leq 0.15$, $ k\geq 1$, $\delta\in\{0,1\}$, $s_k=\sigma+ikt$,
and $s_k' = \sigma_1+ ikt$. We have
\begin{align*}
    &\frac{1}{2}\Re\Big(
    \frac{\Gamma'}{\Gamma}\Big(\frac{s_k+\delta}{2}\Big)-\kappa\frac{\Gamma'}{\Gamma}\Big(\frac{s_k'+\delta}{2}\Big)
    \Big)\\
    &\qquad=\frac{1-\kappa}{2}\log \frac{kt}{2}+\Xi(\sigma, k, t, \delta)
    +\frac{\theta_1}{6}\Big(\frac{1}{(\sigma+\delta)^2+(kt)^2}\Big)
    +\frac{\theta_2\kappa}{6}\Big(\frac{1}{(\sigma_1+\delta)^2+(kt)^2}\Big),
\end{align*}
for some $|\theta_i|\leq 1$, where
\begin{align*}
    \Xi(\sigma,k,t,\delta)&:=\frac{1}{4}\log \Big(1+\Big(\frac{\sigma+\delta}{kt}\Big)^2\Big)-\frac{\kappa}{4} \log \Big(1+\Big(\frac{\sigma_1+\delta}{kt}\Big)^2\Big)\\
    &\qquad\qquad - \frac{\sigma +\delta}{2((\sigma+\delta)^2+(kt)^2)}
    + \kappa\frac{\sigma_1 +\delta}{2((\sigma_1+\delta)^2+(kt)^2)}.
\end{align*}
\end{lemma}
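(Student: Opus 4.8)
The plan is to apply an explicit form of Stirling's asymptotic expansion for $\frac{\Gamma'}{\Gamma}$ with remainder to each of the two digamma terms, and then to isolate the common leading logarithm. Recall the standard expansion (with explicit error control, as in Olver or as used by McCurley \cite{Mcc})
\begin{equation*}
  \frac{\Gamma'}{\Gamma}(z) = \log z - \frac{1}{2z} - \frac{1}{12 z^2} + \frac{\theta}{120|z|^4}, \qquad |\theta|\le 1,
\end{equation*}
valid for $\Re z > 0$; here I would in fact only need the cruder truncation
\begin{equation*}
  \frac{\Gamma'}{\Gamma}(z) = \log z - \frac{1}{2z} + \frac{\theta}{6|z|^2}, \qquad |\theta|\le 1,
\end{equation*}
since the error term advertised in the statement is of size $\tfrac{1}{6}\bigl((\sigma+\delta)^2+(kt)^2\bigr)^{-1}$, i.e. $\tfrac16|z|^{-2}$ with $z=\frac{s_k+\delta}{2}$ up to the factor coming from $z$ versus $2z$ — I will need to be slightly careful to track that the argument is $\frac{s_k+\delta}{2}$, so $|z|^2 = \frac14((\sigma+\delta)^2+(kt)^2)$ and $\tfrac{1}{6|z|^2} = \tfrac{2}{3((\sigma+\delta)^2+(kt)^2)}$; this suggests that the cleanest route is to keep the $-\frac{1}{12z^2}$ term explicitly and bound only the $O(|z|^{-4})$ tail, then re-expand $-\frac{1}{12z^2} = -\frac{1}{3((s_k+\delta))^2}$ and take real parts. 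I would reconcile the precise shape of $\Xi$ with whichever truncation makes the bookkeeping match the claimed $\tfrac16$ constants exactly.

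First I would write $z = \frac{\sigma+\delta}{2} + i\frac{kt}{2}$ and compute $\log z = \tfrac12\log\bigl((\tfrac{\sigma+\delta}{2})^2 + (\tfrac{kt}{2})^2\bigr) + i\arg z$, so that
\begin{equation*}
  \Re\log z = \log\frac{kt}{2} + \frac12\log\Bigl(1 + \Bigl(\frac{\sigma+\delta}{kt}\Bigr)^2\Bigr),
\end{equation*}
which already exhibits the $\log\frac{kt}{2}$ main term and the first logarithmic correction in $\Xi$ (after multiplying by $\tfrac12$ and collecting the $1-\kappa$). Next, $\Re\frac{1}{2z} = \Re\frac{1}{s_k+\delta} = \frac{\sigma+\delta}{(\sigma+\delta)^2+(kt)^2}$, which (after the overall factor $\tfrac12$ and the sign) produces the rational term $-\frac{\sigma+\delta}{2((\sigma+\delta)^2+(kt)^2)}$ in $\Xi$. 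Doing the same for $z' = \frac{\sigma_1+\delta}{2}+i\frac{kt}{2}$, multiplying by $-\kappa$, produces the two $\kappa$-terms in $\Xi$. Finally, the quadratic error terms: $\Re\bigl(\frac{\theta}{6|z|^2}\bigr)$ has modulus at most $\frac{1}{6|z|^2}$, and here one must match the normalization — writing the tail as $\frac{\theta_1}{6}\cdot\frac{1}{(\sigma+\delta)^2+(kt)^2}$ forces the argument of the Stirling remainder to be $s_k+\delta$ rather than $\frac{s_k+\delta}{2}$, which is why I expect to want the three-term Stirling expansion of $\frac{\Gamma'}{\Gamma}$ with the $\frac{1}{12z^2}$ term carried explicitly and the next term $\frac{1}{120z^4}$ absorbed — then $-\frac{1}{12z^2} = -\frac{1}{3(s_k+\delta)^2}$, whose real part is $-\frac{(\sigma+\delta)^2 - (kt)^2}{3((\sigma+\delta)^2+(kt)^2)^2}$ of absolute value at most $\frac{1}{3((\sigma+\delta)^2+(kt)^2)}$ — still not quite $\tfrac16$. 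So the honest statement is that I will need to decide which auxiliary terms to keep inside $\Xi$ versus inside the remainder so that the remainder constant comes out to exactly $\tfrac16$; I would follow the structure of \cite[Lem.~2]{Mcc} and simply refine its error analysis, keeping one more term in the expansion than McCurley did.

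The main obstacle, then, is not conceptual but bookkeeping: getting the error constant down to $\tfrac16$ (the claimed refinement over \cite[Lem.~2]{Mcc}) requires carrying the Stirling expansion one term further than in the original, verifying that in the range $1<\sigma<1+\varepsilon$ with $\varepsilon\le 0.15$, $k\ge 1$, and $t = \gamma_0$ with $|\gamma_0|>1$ (so $kt\ge 1$, whence $|z|$ is bounded below) the discarded tail genuinely fits inside the stated bound, and checking the analogous inequality for the $\sigma_1$-term using $\sigma_1 = \frac{1+\sqrt{1+4\sigma^2}}{2}$. The two real-part computations (for $z$ and $z'$) and the triangle-inequality estimates on the remainders are otherwise routine; the only place requiring care beyond arithmetic is confirming that $\arg z$ and $\arg z'$ do not enter the final real-part identity (they cancel against nothing — they simply do not appear, since we only took $\Re\log$), and that all quantities $\log(1+(\cdot/kt)^2)$ are well-defined and positive, which holds because $kt\ne 0$ in Cases 1 and 2 where this lemma is applied.
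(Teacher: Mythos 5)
Your computation of the main terms is exactly right and matches the paper: writing $z=\frac{s_k+\delta}{2}$, taking $\Re\log z=\log\frac{kt}{2}+\frac12\log(1+(\frac{\sigma+\delta}{kt})^2)$ and $\Re\frac{1}{2z}=\frac{\sigma+\delta}{(\sigma+\delta)^2+(kt)^2}$, and doing the same at $\sigma_1$ with the factor $-\kappa$, produces $\frac{1-\kappa}{2}\log\frac{kt}{2}+\Xi$ after the overall $\frac12$. The genuine gap is the remainder: you never actually obtain the constant $\frac16$, and you say so yourself (``I would reconcile \dots'', ``I will need to decide which auxiliary terms to keep''). Since the statement attributes an error of $\frac{1}{6((\sigma+\delta)^2+(kt)^2)}$ to $\frac12\Re\frac{\Gamma'}{\Gamma}(z)$, what you need is a digamma remainder bound $\big|\frac{\Gamma'}{\Gamma}(z)-\log z+\frac{1}{2z}\big|\le\frac{1}{12|z|^2}$; your two-term Stirling truncation with constant $\frac16$ is off by a factor of $2$, and your three-term attempt does not repair this: keeping $-\frac{1}{12z^2}$ and bounding it by $\frac{1}{12|z|^2}$ already exhausts the entire budget (note you also dropped the outer factor $\frac12$ when you concluded ``still not quite $\frac16$''), leaving no room whatsoever for the $O(|z|^{-4})$ tail, so no rearrangement of that expansion yields the stated form.

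The paper closes this with an ingredient absent from your plan: the exact integral representation (Whittaker--Watson, p.~251)
\begin{equation*}
\frac{\Gamma'}{\Gamma}(s)=\log s-\frac{1}{2s}-2\int_0^{\infty}\frac{v\,dv}{(s^2+v^2)(e^{2\pi v}-1)},
\end{equation*}
applied at $s=z=\frac{s_k+\delta}{2}$. The remainder is then a concrete integral, bounded in modulus by $\frac{1}{|z|^{2}}\int_0^\infty\frac{v\,dv}{e^{2\pi v}-1}=\frac{1}{24|z|^{2}}$, so the full remainder term $-2\Re\int(\cdots)$ equals $\frac{\theta}{12|z|^2}=\frac{\theta}{3((\sigma+\delta)^2+(kt)^2)}$ with $|\theta|\le1$, and halving gives exactly the advertised $\frac{\theta_1}{6}\frac{1}{(\sigma+\delta)^2+(kt)^2}$ (and likewise the $\kappa$-term). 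The sharp constant thus comes from the evaluation $\int_0^\infty\frac{v\,dv}{e^{2\pi v}-1}=\frac{\Gamma(2)\zeta(2)}{(2\pi)^2}=\frac{1}{24}$, not from pushing Stirling one term further; if you want to salvage your route you must import an explicit digamma error bound with constant $\frac{1}{12}$ (valid for the relevant complex arguments), which is essentially equivalent to invoking this integral representation anyway. A minor additional point: the lemma is also used with $|t|\le 1$ (Lemma \ref{lem:McCurley_L1_refined}), so you should not build $kt\ge1$ into the argument; nothing in the proof requires it beyond $kt\neq 0$.
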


\begin{proof}
We know that
\begin{align}\label{eq: estimate_gamma}
    \frac{\Gamma'}{\Gamma}(s)
    = \log{s} - \frac{1}{2s} - 2 \int_0^{\infty} \frac{v}{(s^2 + v^2)(e^{2\pi v}-1)}\,dv ;
\end{align}
see \cite[p.~251]{WhittakerWatson}. It follows from \eqref{eq: estimate_gamma} that 
\begin{equation}\label{eq: Re(digamma)}
\begin{split}
    \Re \frac{\Gamma'}{\Gamma}\Big(\frac{s_k+\delta}{2}\Big)     &= \log\Big|\frac{s_k+\delta}{2}\Big| +
    \Re\Big(\frac{1}{s_k+\delta}\Big)  - 2\Re
    \int_0^{\infty} \frac{v}{((\frac{s_k+\delta}{2})^2 + v^2)(e^{2\pi v}-1)}dv.
\end{split}
\end{equation}
Moreover, we have
\begin{align*}
    \Re\int_0^{\infty} \frac{v}{(z^2 + v^2)(e^{2\pi v}-1)}dv
    \leq \Big| \int_0^{\infty} \frac{v}{(z^2 + v^2)(e^{2\pi v}-1)}\,dv \Big|  ,
\end{align*}
which is
$$
\le \int_0^{\infty} \Big|\frac{v}{(z^2 + v^2)(e^{2\pi v}-1)}\Big|\,dv
\leq \frac{1}{|z^2|}\int_0^{\infty} \frac{v}{e^{2\pi v}-1}\,dv 
= \frac{1}{24|z|^2} .
$$
Thence, we can write
\begin{equation*}
    \Re\int_0^{\infty} \frac{v}{(z^2 + v^2)(e^{2\pi v}-1)}dv = \frac{\theta}{24|z|^2} ,
\end{equation*}
wherein $|\theta|\leq 1$, and so we can insert $z = \frac{s_k + \delta}{2}$ into this observation to obtain
\begin{equation*}
    - 2 \Re \int_0^{\infty} \frac{v}{((\frac{s_k + \delta}{2})^2 + v^2)(e^{2\pi v}-1)}dv
    = \frac{\theta}{3 |s_k + \delta|^2}
    = \frac{\theta}{3 ((\sigma + \delta)^2 + k^2 t^2)} .
\end{equation*}
In addition, we know
\begin{equation*}
    \log\Big|\frac{s_k+\delta}{2}\Big| 
    = \log{\frac{kt}{2}} + \frac{1}{2} \log\Big(1 + \Big(\frac{\sigma + \delta}{kt}\Big)^2\Big)
    \text{ and }
    \Re\Big(\frac{1}{s_k+\delta}\Big)
    = \frac{\sigma + \delta}{(\sigma + \delta)^2 + k^2 t^2} .
\end{equation*}
Therefore, we insert these observations into \eqref{eq: Re(digamma)} to yield the expected bounds.
\end{proof}

Using Lemma \ref{lem:McCurley_L2_refined}, we can also prove the following result.

\begin{lemma}
\label{lem:McCurley_L1_refined}
Let $1<\sigma <1.15$, $|t| \leq 1$, $1\leq k \leq 46$, $\delta\in \{0,1\}$, $s_k=\sigma+ikt$, and $s_k'=\sigma_1+ikt$. Then 
\begin{align}\label{lem2.2-eq}
    \frac{1}{2}\Re\Big(
    \frac{\Gamma'}{\Gamma}\Big(\frac{s_k+\delta}{2}\Big)-\kappa\frac{\Gamma'}{\Gamma}\Big(\frac{s_k'+\delta}{2}\Big)
    \Big) 
    \le\mathfrak{d}(\delta,k),
\end{align}
where values for $\mathfrak{d}(\delta,k)$ with are presented in Table \ref{tab:d(k)}.
\end{lemma}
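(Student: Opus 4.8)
The plan is to apply Lemma~\ref{lem:McCurley_L2_refined} with $t$ restricted to $|t|\le 1$ and $1\le k\le 46$, and to bound each of the resulting terms uniformly in $\sigma\in(1,1.15)$, so that the right-hand side becomes an explicit constant $\mathfrak{d}(\delta,k)$ depending only on $\delta$ and $k$. Writing $x = kt$, the main term $\frac{1-\kappa}{2}\log\frac{kt}{2}$ and the function $\Xi(\sigma,k,t,\delta)$ both depend on $t$ only through $x$ (together with $\sigma$ via $\sigma_1 = \frac{1+\sqrt{1+4\sigma^2}}{2}$), while the two $\theta$-error terms are bounded in absolute value by $\frac{1}{6((\sigma+\delta)^2 + x^2)}$ and $\frac{\kappa}{6((\sigma_1+\delta)^2+x^2)}$ respectively. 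The first step is therefore to collect everything into a function $G(\sigma, x) := \frac{1-\kappa}{2}\log\frac{x}{2} + \Xi + \text{(error bounds)}$ and to show that $\sup_{1<\sigma<1.15,\ 0<x\le k} G(\sigma,x)$ is attained (or well-approximated) at easily controlled points.

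The key steps, in order: (i) Observe that $\frac{1-\kappa}{2}\log\frac{x}{2}$ is increasing in $x$, so on $0<x\le k$ it is largest near $x=k$; but the error terms blow up as $x\to 0^+$, so one cannot simply set $x=k$. Instead, split the range of $x$ at a convenient threshold (say $x\ge 1$ versus $x<1$, noting that $x = kt$ with $|t|\le 1$ forces $x\le k\le 46$): on $x\ge 1$ the error terms are $O(1)$ and one maximises the smooth part; on the small-$x$ piece one uses the fact that $\log\frac{x}{2}$ is very negative, which compensates. (ii) Bound $\Xi(\sigma,k,t,\delta)$: the two logarithmic pieces $\frac14\log(1+(\frac{\sigma+\delta}{x})^2) - \frac{\kappa}{4}\log(1+(\frac{\sigma_1+\delta}{x})^2)$ are each monotone in their arguments and can be estimated using $1<\sigma<1.15$ (hence $\sigma_1 < \sigma_1(1.15)$, an explicit number) and $\delta\in\{0,1\}$; the two rational pieces $-\frac{\sigma+\delta}{2((\sigma+\delta)^2+x^2)} + \kappa\frac{\sigma_1+\delta}{2((\sigma_1+\delta)^2+x^2)}$ are bounded by elementary calculus. (iii) For each of the $2\times 46 = 92$ pairs $(\delta,k)$, perform the resulting one- or two-variable optimisation numerically (or via crude monotonicity + interval bounds) to extract the tabulated value $\mathfrak{d}(\delta,k)$, always rounding up.

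I expect the main obstacle to be the non-monotone interplay near small $x$: the term $\frac{1-\kappa}{2}\log\frac{x}{2}\to-\infty$ while $\frac14\log(1+(\frac{\sigma+\delta}{x})^2)\to+\infty$ and the $\theta$-error $\frac{\theta_1}{6((\sigma+\delta)^2+x^2)}\to \frac{\theta_1}{6(\sigma+\delta)^2}$ stays bounded; since the divergent parts nearly cancel (the coefficients are $\frac{1-\kappa}{2}$ versus $\frac14$, and $\frac{1-\kappa}{2}>\frac14$ since $\kappa = 1/\sqrt5 < 1/2$), one must check that the combination $\frac{1-\kappa}{2}\log x + \frac14\log(x^2 + (\sigma+\delta)^2) - \frac{\kappa}{4}\log(x^2+(\sigma_1+\delta)^2)$ has its supremum on $0<x\le k$ at an interior critical point or at $x=k$, and locate it carefully enough to get a sharp (small) constant rather than a wasteful one. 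This requires a short but genuine calculus argument for the critical point of $G(\sigma,\cdot)$ in $x$, followed by a uniform-in-$\sigma$ estimate; the remaining per-$(\delta,k)$ bookkeeping is routine and is what populates Table~\ref{tab:d(k)}.
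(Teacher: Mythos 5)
Your plan is essentially the paper's proof: apply Lemma \ref{lem:McCurley_L2_refined}, replace the $\theta$-terms by their absolute-value bounds $\frac{1}{6((\sigma+\delta)^2+(kt)^2)}$ and $\frac{\kappa}{6((\sigma_1+\delta)^2+(kt)^2)}$, and optimise the resulting explicit function of $(\sigma,kt)$ over $1<\sigma<1.15$, $|t|\le 1$ for each pair $(\delta,k)$, which is exactly how the entries of Table \ref{tab:d(k)} are obtained. One simplification you missed: the total coefficient of $\log(kt)$ in the combined expression is $\frac{1-\kappa}{2}-\frac{1}{2}+\frac{\kappa}{2}=0$, so the main term and the logarithmic pieces of $\Xi$ merge exactly into $\frac{1}{4}\log\frac{(\sigma+\delta)^2+(kt)^2}{4}-\frac{\kappa}{4}\log\frac{(\sigma_1+\delta)^2+(kt)^2}{4}$, which is bounded (indeed increasing in $|kt|$) all the way down to $t=0$; hence the small-$x$ ``near-cancellation'' obstacle you anticipate does not arise and no splitting of the range of $x=kt$ is needed.
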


\begin{proof}
Using Lemma \ref{lem:McCurley_L2_refined}, we can bound the left of \eqref{lem2.2-eq} above by
\begin{align*}
 &\frac{1}{4}\log \Big(\frac{(\sigma+\delta)^2+(kt)^2}{4}\Big)-\frac{\kappa}{4}\log \Big(\frac{(\sigma_1+\delta)^2+(kt)^2}{4}\Big) -\frac{1}{2}\Big(\frac{\sigma+\delta}{(\sigma+\delta)^2 
 +(kt)^2}\Big) \\
    &+\frac{\kappa}{2}\Big(\frac{\sigma_1+\delta}{(\sigma_1+\delta)^2+(kt)^2}\Big) +\frac{1}{6}\Big(\frac{1}{(\sigma+\delta)^2+(kt)^2}\Big)
    +\frac{\kappa}{6}\Big(\frac{1}{(\sigma_1+\delta)^2+(kt)^2}\Big) .
\end{align*}
Optimising these values, we get the explicit values presented in Table \ref{tab:d(k)}.
\end{proof}

\section{Case 1: large ordinates}\label{Case-1-proof}

In this section, bring forward all of the notations and set-up from Section \ref{sec:setup}. Our first result, namely Theorem \ref{thm:large_ordinates} will follow by considering Case 1, that is $\gamma_0 > 1$. In particular, we will bound $S_1$, $S_3$, and $S_4$ in Section \ref{ssec:S3S4}, make choices for our polynomial in Section \ref{ssec:polynomial_choice}, and combine observations to prove Theorem \ref{thm:large_ordinates} in Section \ref{ssec:comps}. Note that $S_2$ will be computed directly, so there is no need to bound it.

\subsection{Bounds for $S_i$}\label{ssec:S3S4}

Kadiri has shown that 
\begin{equation}\label{eqn:S1}
    S_1 \leq - \frac{a_1}{\sigma - \beta_0} ,
\end{equation}
where $\beta_0$ is the real part of the non-trivial zero $\varrho_0$ that we isolated earlier; see \cite[Sec.~2.2]{kadiri2012explicit}. 
Next, we bound $S_3$ and $S_4$ in the following lemmas.

\begin{lemma}\label{lem:S3}
Suppose that $h(\sigma) = \frac{1}{\sigma} - \frac{\kappa}{\sigma_1} - \frac{\kappa}{\sigma_1 - 1}$ and
\begin{align*}
    \Sigma_k(\sigma,t)
    &:= F(\sigma + ikt, 1) - \kappa F(\sigma_1 + ikt, 1)\\
    &= \frac{\sigma}{\sigma^2 + k^2 t^2} + \frac{\sigma - 1}{(\sigma - 1)^2 + k^2 t^2} - \kappa\frac{\sigma_1}{{\sigma_1}^2 + k^2 t^2} - \kappa \frac{\sigma_1 - 1}{(\sigma_1 - 1)^2 + k^2 t^2} .
\end{align*}
If $\alpha_{\varepsilon} = h(1 + \varepsilon) < 0.021467$, then 
\begin{equation*}
    S_3 \leq a_0\Big(\frac{1}{\sigma - 1} + \alpha_{\varepsilon}\Big) + \sum_{k = 1}^{n} a_k \Sigma_k(1+\varepsilon,1) .
\end{equation*}
\end{lemma}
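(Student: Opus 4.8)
The plan is to split $S_3 = \sum_{k=0}^n a_k(F(s_k,1) - \kappa F(s'_k,1)) = \sum_{k=0}^n a_k \Sigma_k(\sigma, kt)$ into its $k=0$ term and the remaining sum over $1 \le k \le n$, and bound each piece separately. For the $k=0$ term, we have $\Sigma_0(\sigma, 0) = \frac{1}{\sigma} + \frac{1}{\sigma-1} - \frac{\kappa}{\sigma_1} - \frac{\kappa}{\sigma_1 - 1}$ (taking $t \to 0$ in the defining formula, noting the $k^2t^2$ terms vanish), which is exactly $\frac{1}{\sigma-1} + h(\sigma)$. So the $k=0$ contribution is $a_0\big(\frac{1}{\sigma-1} + h(\sigma)\big)$, and it remains to show $h(\sigma) \le \alpha_\varepsilon$ on the range $1 < \sigma < 1+\varepsilon$, together with showing each $\Sigma_k(\sigma, kt) \le \Sigma_k(1+\varepsilon, 1)$ for $k \ge 1$.

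First I would handle $h(\sigma)$. Since $\sigma_1 = \frac{1+\sqrt{1+4\sigma^2}}{2}$ is increasing in $\sigma$, both $\frac{\kappa}{\sigma_1}$ and $\frac{\kappa}{\sigma_1 - 1}$ are decreasing in $\sigma$, while $\frac{1}{\sigma}$ is decreasing; hence $h(\sigma) = \frac{1}{\sigma} - \frac{\kappa}{\sigma_1} - \frac{\kappa}{\sigma_1-1}$ has competing monotonicity. The cleanest route is to check directly that $h$ is increasing on $(1, 1.15]$ (differentiate, or simply observe the dominant effect), so that $h(\sigma) < h(1+\varepsilon) = \alpha_\varepsilon$ for $\sigma < 1+\varepsilon$; then one verifies numerically that $\alpha_\varepsilon = h(1+\varepsilon) \le h(1.15) < 0.021467$ using $\kappa = 1/\sqrt 5$, $\sigma_1(1.15) = \frac{1+\sqrt{1+4(1.15)^2}}{2}$. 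This gives $S_3$'s $k=0$ term $\le a_0\big(\frac{1}{\sigma-1} + \alpha_\varepsilon\big)$.

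For $k \ge 1$, I would show that $\Sigma_k(\sigma, kt)$, viewed as a function of the two variables $\sigma \in (1, 1+\varepsilon)$ and $u := (kt)^2 \ge 0$, is maximised at the corner $\sigma = 1+\varepsilon$, $u = 1$ (using $|t| = |\gamma_0| > 1$ in Case 1, wait — actually in Case 1 we have $\gamma_0 > 1$ so $kt \ge t > 1$; but the lemma as stated plugs in $kt = 1$, so the relevant monotonicity is that $\Sigma_k$ is \emph{decreasing} in $u$ for $u \ge 1$, hence $\Sigma_k(\sigma, kt) \le \Sigma_k(\sigma, 1)$, and then \emph{increasing} in $\sigma$, giving $\le \Sigma_k(1+\varepsilon, 1)$). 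Monotonicity in $u$: each term $\frac{c}{c^2 + u}$ with $c > 0$ is decreasing in $u$, and since the positive coefficients ($\sigma, \sigma-1$) and negative coefficients ($-\kappa\sigma_1, -\kappa(\sigma_1-1)$) both contribute, one checks that the net derivative in $u$ is negative for $u \ge 1$ — this requires a short computation comparing the two positive terms against the two negative terms, using that $\sigma_1 > \sigma$ and $\kappa < 1$. Monotonicity in $\sigma$ (with $u=1$ fixed) similarly follows from a derivative estimate on $(1, 1.15]$.

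The main obstacle is the two-variable monotonicity of $\Sigma_k$ for $k \ge 1$: one must rule out interior maxima, and the function is a difference of rational functions whose derivative does not have an obviously-signed form, so the argument likely reduces to a few careful sign estimates (or an appeal to the narrowness of the interval $(1, 1.15]$ combined with crude bounds). Establishing that $\alpha_\varepsilon < 0.021467$ on the stated range is, by contrast, a routine numerical verification once the monotonicity of $h$ is pinned down.
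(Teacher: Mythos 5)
Your overall strategy --- isolating the $k=0$ term, which equals $a_0\big(\tfrac{1}{\sigma-1}+h(\sigma)\big)$, and pushing each $k\ge 1$ term to the corner of the $(\sigma,t)$-range by monotonicity --- is the same as the paper's, which simply splits $k=0$ from $k>0$ and defers to the arguments of Lee's Sec.~2.2. The $k=0$ part of your plan is sound: writing $h(\sigma)=\tfrac{1}{\sigma}-\kappa\tfrac{\sqrt{1+4\sigma^2}}{\sigma^2}$ (using $\sigma_1(\sigma_1-1)=\sigma^2$), one checks $h'>0$ on $(1,1.15]$, so $h(\sigma)\le h(1+\varepsilon)=\alpha_\varepsilon$. (Your aside that one verifies $h(1.15)<0.021467$ is marginally false, since $h(1.15)\approx 0.021470$; but it is also immaterial, because $\alpha_\varepsilon<0.021467$ is a hypothesis of the lemma, not something you need to establish.)

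The genuine problem is in the $k\ge 1$ step. You assert that ``the lemma as stated plugs in $kt=1$''. It does not: in the lemma's notation the second argument of $\Sigma_k$ is $t$, so $\Sigma_k(1+\varepsilon,1)$ means $t=1$, i.e.\ $k^2t^2=k^2$, not $kt=1$. Hence the chain you propose ($\Sigma_k$ decreasing in $u=(kt)^2$ on $[1,\infty)$, then increasing in $\sigma$ at $u=1$) terminates at the value of the expression with $u=1$, which for $k\ge 2$ is larger than the lemma's bound $\Sigma_k(1+\varepsilon,1)$ --- by the very monotonicity in $u$ you invoke --- so you would prove a strictly weaker inequality than the one stated. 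The repair is easy (apply the decrease in $u$ on $[k^2,\infty)$, valid since $kt\ge k$ when $t=\gamma_0>1$, to land at $t=1$, and prove the increase in $\sigma$ at $u=k^2$ for each $1\le k\le n$), but as written your endpoint is misidentified. Moreover, those two monotonicity statements are the entire content of the lemma for $k\ge 1$, and your proposal leaves them as ``a short computation'' / ``a few careful sign estimates'' without executing them: for instance, the decrease in $u$ amounts to showing $\tfrac{\sigma}{(\sigma^2+u)^2}+\tfrac{\sigma-1}{((\sigma-1)^2+u)^2}\ge\kappa\big(\tfrac{\sigma_1}{(\sigma_1^2+u)^2}+\tfrac{\sigma_1-1}{((\sigma_1-1)^2+u)^2}\big)$ on the relevant range, which is true but genuinely needs an argument, since the margin degenerates as $\sigma\to 1^+$ and $u\to\infty$ (the leading coefficients $2\sigma-1$ and $\kappa(2\sigma_1-1)=\kappa\sqrt{1+4\sigma^2}$ coincide at $\sigma=1$). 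So the plan follows the paper's route, but the proof is incomplete and its stated target for $k\ge 1$ is the wrong quantity.
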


\begin{proof}
Consider the cases $k=0$ and $k>0$ separately, and follow the arguments in \cite[Sec.~2.2]{Lee21}.
\end{proof}

\begin{lemma}\label{lem:S4}
Suppose that
\begin{equation*}
    \frac{1}{2}\max_{\delta\in\{0,1\}}\Big\{\Re\Big(\frac{\Gamma'}{\Gamma}\Big(\frac{\sigma + \delta}{2}\Big) - \kappa \frac{\Gamma'}{\Gamma}\Big(\frac{\sigma_1 + \delta}{2}\Big)\Big)\Big\} \leq d_{\varepsilon}(0),
\end{equation*}
wherein $d_{\varepsilon}(0)$ is the maximum of the functions such that $\sigma = 1 + \varepsilon$. Moreover, we write $t > T_0 \geq 1$,
\begin{equation*}
    \mathcal{S}_1(k,\varepsilon) = \max_{\delta\in\{0,1\}}\big\{\mathcal{C}_1(k,\delta,\varepsilon)\big\},
    \quad\text{and}\quad
    \mathcal{S}_2(k,\varepsilon) = \max_{\delta\in\{0,1\}}\big\{\mathcal{C}_2(k,\delta,\varepsilon)\big\}
\end{equation*}
such that
\begin{align*}
    \mathcal{C}_1(k,\delta,\varepsilon)
    &= \frac{1 - \kappa}{2}\log \frac{k}{2} + \Xi_2(1+\varepsilon,k,T_0,\delta)\\
    & + \frac{1}{6}\Big(\frac{1}{(1-\delta)^2 + k^2 T_0^2}\Big) +\frac{\kappa}{6} \Big(\frac{1}{(\sigma_1(1) -\delta)^2 + k^2 T_0^2}\Big), \\
    \mathcal{C}_2(k,\delta,\varepsilon)
    &= \frac{1 - \kappa}{2}\log \frac{k}{2} + \mathcal{A}(k,T_0,\delta,\varepsilon)\\
    & + \frac{1}{6}\Big(\frac{1}{(1-\delta)^2 + k^2 T_0^2}\Big) +\frac{\kappa}{6} \Big(\frac{1}{(\sigma_1(1) -\delta)^2 + k^2 T_0^2}\Big) ,
         \end{align*}
    where     
    \begin{align*}
    \Xi_2(\sigma,k,t,\delta) &= \frac{1}{4}\log\!\Big(1 + \Big(\frac{\sigma + \delta}{kt}\Big)^2\Big) - \frac{\kappa}{4}\log\!\Big(1 + \Big(\frac{\sigma_1 + \delta}{kt}\Big)^2\Big),\\
           \Xi(\sigma,k,t,\delta)
    &= \Xi_2(\sigma,k,t,\delta) - \frac{\sigma + \delta}{2((\sigma + \delta)^2 + k^2t^2)}
    + \frac{\kappa}{2} \frac{\sigma_1 + \delta}{(\sigma_1 + \delta)^2 + k^2t^2} , \text{ and}\\
    \mathcal{A}(k,t,\delta,\varepsilon) &=
    \begin{cases}
        0&\text{if }\delta = 0\text{ or }\delta = 1\text{ and }k\not\in\{1,2\};\\
        \Xi(1+\varepsilon,k,t,1) &\text{if }\delta = 1\text{ and }k = 1;\\
        \Xi(1.15,k,t,1)&\text{if }\delta = 1\text{ and }k=2.
\end{cases}
\end{align*}
If $\mathcal{S}(k,\varepsilon)=\min\{\mathcal{S}_1(k,\varepsilon), \mathcal{S}_2(k,\varepsilon)\}$, then
\begin{equation*}
    \Re\Big(\frac{\gamma_L'(s_k)}{\gamma_L(s_k)} - \kappa \frac{\gamma_L'({s'_k})}{\gamma_L({s'_k})}\Big) \leq 
\begin{cases}
    n_L\left(d_{\varepsilon}(0) - \frac{1-\kappa}{2} \log\pi \right)&\text{if }k=0;\\
    n_L\left(\frac{1 - \kappa}{2} (\log{t} + \log(\frac{k}{\pi}) ) + \mathcal{S}(k,\varepsilon) \right)&\text{if }k\neq 0.
\end{cases}
\end{equation*}
Consequently, we have
\begin{align*}
    S_4 
    &\leq a_0 n_L\Big(d_{\varepsilon}(0) -\frac{1-\kappa}{2} \log\pi \Big) + \sum_{k = 1}^{n} a_k n_L\Big(\frac{1 - \kappa}{2}\Big(\log t + \log\Big(\frac{k}{\pi}\Big)\Big) + \mathcal{S}(k,\varepsilon) \Big).
\end{align*}
\end{lemma}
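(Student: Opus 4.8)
The plan is to handle the cases $k=0$ and $k\neq 0$ separately, in each case rewriting the gamma factor $\Re\big(\frac{\gamma_L'(s_k)}{\gamma_L(s_k)} - \kappa\frac{\gamma_L'(s_k')}{\gamma_L(s_k')}\big)$ in terms of the digamma functions appearing in Lemma \ref{lem:McCurley_L2_refined}. Recalling the definition \eqref{eqn:gammaL} of $\gamma_L$, logarithmic differentiation gives
\begin{equation*}
    \frac{\gamma_L'}{\gamma_L}(s) = -\frac{n_L}{2}\log\pi + \frac{r_2}{2}\frac{\Gamma'}{\Gamma}\Big(\frac{s+1}{2}\Big) + \frac{r_1+r_2}{2}\frac{\Gamma'}{\Gamma}\Big(\frac{s}{2}\Big),
\end{equation*}
so that, using $n_L = r_1 + 2r_2$, the quantity $\Re\big(\frac{\gamma_L'(s_k)}{\gamma_L(s_k)} - \kappa\frac{\gamma_L'(s_k')}{\gamma_L(s_k')}\big)$ equals $-\frac{(1-\kappa)n_L}{2}\log\pi$ plus a non-negative combination (with coefficients $r_2$ and $r_1+r_2$ summing to $n_L$ when $\delta$ ranges over $\{1\}$ and $\{0\}$ respectively) of the expressions $\frac12\Re\big(\frac{\Gamma'}{\Gamma}(\frac{s_k+\delta}{2}) - \kappa\frac{\Gamma'}{\Gamma}(\frac{s_k'+\delta}{2})\big)$ for $\delta\in\{0,1\}$. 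Bounding each such expression by its maximum over $\delta\in\{0,1\}$ and using $r_2 + (r_1+r_2) = n_L$ reduces everything to estimating $\max_{\delta\in\{0,1\}}\frac12\Re\big(\frac{\Gamma'}{\Gamma}(\frac{s_k+\delta}{2}) - \kappa\frac{\Gamma'}{\Gamma}(\frac{s_k'+\delta}{2})\big)$.

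For $k=0$ this maximum is, by definition, at most $d_\varepsilon(0)$ once we note the relevant quantity is increasing in $\sigma$ on $(1,1+\varepsilon)$ so the worst case is $\sigma = 1+\varepsilon$; this immediately yields the $k=0$ bound $n_L\big(d_\varepsilon(0) - \frac{1-\kappa}{2}\log\pi\big)$. For $k\neq 0$ we apply Lemma \ref{lem:McCurley_L2_refined} with the variable there being $kt$ (here $t > T_0 \geq 1$, so $kt > 0$), which gives
\begin{equation*}
    \tfrac12\Re\Big(\tfrac{\Gamma'}{\Gamma}\Big(\tfrac{s_k+\delta}{2}\Big) - \kappa\tfrac{\Gamma'}{\Gamma}\Big(\tfrac{s_k'+\delta}{2}\Big)\Big) = \tfrac{1-\kappa}{2}\log\tfrac{kt}{2} + \Xi(\sigma,k,t,\delta) + \tfrac{\theta_1}{6}\tfrac{1}{(\sigma+\delta)^2+(kt)^2} + \tfrac{\theta_2\kappa}{6}\tfrac{1}{(\sigma_1+\delta)^2+(kt)^2},
\end{equation*}
and $\frac{1-\kappa}{2}\log\frac{kt}{2} = \frac{1-\kappa}{2}\log t + \frac{1-\kappa}{2}\log\frac{k}{\pi} + \frac{1-\kappa}{2}\log\frac{\pi}{2}$, so combining with the $-\frac{1-\kappa}{2}\log\pi$ term leaves the stated $\frac{1-\kappa}{2}(\log t + \log\frac{k}{\pi})$ plus a remainder. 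The task is then to bound this remainder, namely $\Xi(\sigma,k,t,\delta)$ together with the two $\theta$-error terms, by $\mathcal{S}(k,\varepsilon)$. Here one produces two competing bounds: $\mathcal{S}_1(k,\varepsilon)$ comes from bounding $\Xi$ crudely, dropping the (negative) $-\frac{\sigma+\delta}{2(\cdots)}$ part entirely and keeping only $\Xi_2$, while bounding the $\theta$-terms by $+\frac16$ and evaluating at the extreme point ($\sigma\to 1^+$, $t\to T_0^+$) where $\Xi_2$ and the error terms are largest; $\mathcal{S}_2(k,\varepsilon)$ instead retains the full $\Xi$ (via the $\mathcal{A}$ term) for the delicate cases $\delta=1$, $k\in\{1,2\}$ where the cruder bound loses too much, evaluating at the appropriate $\sigma$ ($1+\varepsilon$ for $k=1$, $1.15$ for $k=2$). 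Taking $\mathcal{S}(k,\varepsilon) = \min\{\mathcal{S}_1(k,\varepsilon),\mathcal{S}_2(k,\varepsilon)\}$ gives the pointwise bound for $k\neq 0$, and summing $a_k$ times each bound over $k$ (with $a_0$ against the $k=0$ term) yields the claimed inequality for $S_4$.

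The main obstacle is the monotonicity bookkeeping for $k\neq 0$: one must verify that $\Xi_2(\sigma,k,t,\delta)$ and the two error terms are each monotone (decreasing) in $t$ for $t\geq T_0$ and monotone in $\sigma$ on the relevant range, so that substituting $t = T_0$ and the extreme value of $\sigma$ is legitimate; and one must check that the $-\frac{\sigma+\delta}{2((\sigma+\delta)^2+k^2t^2)}$ term is genuinely negative (hence can be dropped in $\mathcal{S}_1$) and that, for $\delta=1$, $k\in\{1,2\}$, keeping it via $\mathcal{A}$ and substituting the stated $\sigma$ values indeed gives a valid upper bound — this last point requires checking the sign of the $\sigma$-derivative of $\Xi(\sigma,k,T_0,1)$ on $(1,1+\varepsilon)$ and on $(1,1.15)$ respectively. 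These are elementary calculus checks on explicit rational-plus-logarithmic functions, but they are the substantive content; once they are in place, the rest is the algebraic rearrangement above plus the numerical optimisation that produces the tabulated constants.
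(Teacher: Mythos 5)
Your proposal is correct and matches the paper's proof, which simply says to treat $k=0$ and $k>0$ separately and follow the argument of \cite[Sec.~2.3]{Lee21} with Lemma \ref{lem:McCurley_L2_refined} in place of \cite[Lem.~2]{Mcc} --- precisely the decomposition of $\gamma_L'/\gamma_L$ into digamma terms weighted by $r_2$ and $r_1+r_2$, the application of the refined lemma for $k\neq 0$, and the two competing bounds $\mathcal{S}_1,\mathcal{S}_2$ that you spell out. The only quibble is your parenthetical claim that $\Xi_2$ is largest as $\sigma\to 1^+$ (it is increasing in $\sigma$, which is why $\mathcal{C}_1$ evaluates it at $\sigma=1+\varepsilon$), but your later, correctly stated monotonicity checks supersede this and the plan is sound.
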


\begin{proof}
Consider the cases $k=0$ and $k>0$ separately and follow the arguments laid out in \cite[Sec.~2.3]{Lee21}, using Lemma \ref{lem:McCurley_L2_refined} instead of \cite[Lem.~2]{Mcc}.
\end{proof}

\subsection{Completing the argument}\label{ssec:completion}

Suppose that $r>0$, and $\sigma$ is chosen such that $$\sigma - 1 = r(1-\beta_0).$$ Insert the upper bounds for $S_i$ from \eqref{eqn:S1}, Lemma \ref{lem:S3}, and Lemma \ref{lem:S4} into \eqref{eqn:imp} and rearrange the resulting inequality to see 
\begin{equation}\label{eqn:useful2}
    \beta_0 \leq 1 - \frac{\frac{a_1}{1+r} - \frac{a_0}{r}}{c_1\log d_L + c_2 n_L \log t + c_3 n_L + c_4},
\end{equation}
where
\begin{align*}
    c_1 &= \frac{1-\kappa}{2}\sum_{k=0}^{n} a_k,\\
    c_2 &= \frac{1-\kappa}{2}\sum_{k=1}^{n} a_k,\\
    c_3 &= a_0 \Big(d_{\varepsilon}(0) - \frac{1-\kappa}{2}\log \pi\Big)+ \sum_{k=1}^{n} a_k\Big(\frac{1 - \kappa}{2}\log\Big(\frac{k}{\pi}\Big) + \mathcal{S}(k,\varepsilon) \Big),\text{ and}\\
    c_4 &= \alpha_{\varepsilon} a_0 + \sum_{k=1}^{n} a_k \Sigma_k(1+\varepsilon,1) .
\end{align*}
The maximum value of $\frac{a_1}{1+r} - \frac{a_0}{r}$ occurs at $r = \frac{\sqrt{a_0}}{\sqrt{a_1} - \sqrt{a_0}}$. Therefore, dividing the numerator and denominator of \eqref{eqn:useful2} by
$$M=\frac{a_1}{1+\frac{\sqrt{a_0}}{\sqrt{a_1} - \sqrt{a_0}}} - \frac{a_0}{\frac{\sqrt{a_0}}{\sqrt{a_1} - \sqrt{a_0}}},$$
we see that
\begin{equation*}
    \beta_0\leq 1 - \frac{1}{\frac{c_1}{M}\log{d_L} + \frac{c_2}{M} n_L \log{t} + \frac{c_3}{M} n_L + \frac{c_4}{M}}
    \quad\text{for all}\quad
    |\gamma_0| > 1 ;
\end{equation*}
this is \eqref{eqn:dlvp_nf} with
\begin{equation}\label{eqn:general}
    (C_1,C_2,C_3,C_4,T) = \left(\frac{c_1}{M}, \frac{c_2}{M}, \frac{c_3}{M}, \frac{c_4}{M},1\right) .
\end{equation}
Clearly, the choice of $\varepsilon$ influences the definitions of $c_3$ and $c_4$, in that these values are minimised when $\varepsilon$ is minimised. Therefore, we should choose $\varepsilon$ to be as small as possible, but also enforce the condition
\begin{equation}\label{eqn:esp_choice}
    \varepsilon > \frac{1}{\frac{c_1}{M}\log{d_L} + \frac{c_3}{M} n_L + \frac{c_4}{M}},
\end{equation}
which will prevent the condition $\beta_0 > 1 - \varepsilon$ from being violated. So, to find the latest admissible constants in a zero-free region of the form \eqref{eqn:dlvp_nf} for number fields $L$ with
$n_L \geq n_0 \geq 3$, all that remains is to choose $\varepsilon$ and the polynomial appropriately. To this end, we note that
\begin{equation*}
    \frac{1}{\frac{c_1}{M}\log{d_L} + \frac{c_3}{M} n_L + \frac{c_4}{M}} \leq \frac{1}{\frac{c_1}{M}\log{d_\text{min}(n_0)} + \frac{c_3}{M} n_0 + \frac{c_4}{M}},
\end{equation*}
where $d_\text{min}(n_0)$ is the smallest permissible discriminant for a number field with $n_L \geq n_0$. Recall that admissible values for $d_\text{min}(n_0)$ are presented in Table \ref{Table-imp}.

\begin{remark}
Alternatively, one could choose $\varepsilon$ arbitrarily, but then the result would only be valid when $t$ satisfies $|t|\geq T$ such that 
\begin{equation*}
    \varepsilon > \frac{1}{\frac{c_1}{M}\log{d_\text{min}(n_0)} + \frac{c_2}{M} n_0 \log{T} + \frac{c_3}{M} n_0 + \frac{c_4}{M}} .
\end{equation*}
For a fixed choice of $\varepsilon$, it is an easy problem to find an explicit value for $T$ such that this would hold for all $|t| \geq T$, and hence we opt to leave this as an exercise for the reader.
\end{remark}

\subsection{Candidates for the choice of polynomial}\label{ssec:polynomial_choice}

Kadiri chose a degree $n=4$ polynomial to prove \eqref{eqn:dlvp_nf} with $(C_1, C_2, C_3, C_4, T) = (12.55, 9.69, 3.03, 58.63,1)$ in \cite{kadiri2012explicit}. Lee used the degree $n=16$ polynomial from \cite{MossinghoffTrudgian2015} in \cite{Lee21} to establish \eqref{eqn:dlvp_nf} with
\begin{equation*}
    (C_1,C_2,C_3,C_4,T) = (12.2411, 9.5347, 0.05017, 2.2692,1).
\end{equation*}
This refinement to Kadiri's method was one of the biggest sources of refinement in \cite{Lee21}, in part because the polynomial was chosen using simulated annealing to improve the latest zero-free region for the Riemann zeta-function at the time. Prior to this, Kondrat\cprime ev had used a degree $n=8$ polynomial in \cite{Kondratev} to beat Ste\v{c}kin's result in \cite{Stechkin1970}. Recently, Mossinghoff, Trudgian, and Yang have unveiled higher degree polynomials (of degree $n\in\{40, 46\}$) in \cite{MossinghoffTrudgianYang}, which were also chosen using simulated annealing to refine the latest zero-free regions for the Riemann zeta-function. The coefficients of these polynomials are presented in Tables \ref{table:coefficients2}-\ref{table:coefficients3}. It seems natural, therefore, that these polynomials can also be used to refine the result in \cite{Lee21}.

\subsection{Computations}\label{ssec:comps}

Now, having a selection of candidate polynomials in hand, we can describe our algorithm to compute new admissible values for $C_i$ in \eqref{eqn:dlvp_nf} with $T=1$ and $n_L\geq n_0\geq 3$. That is, fixing a choice of polynomial and $n_0$, we then follow this straightforward process:

\begin{algorithmic}
\State $\varepsilon \gets 0.0001$
\While{$\left(\frac{c_1}{M}\log{d_\text{min}(n_0)} + \frac{c_3(\varepsilon)}{M} n_0 + \frac{c_4(\varepsilon)}{M}\right)^{-1} \geq \varepsilon$}
    \State $\varepsilon \gets \varepsilon + 0.0001$
\EndWhile
\end{algorithmic}

\noindent The outcome of this process will be the least $\varepsilon$ (up to four decimal places) such that \eqref{eqn:esp_choice} is satisfied. Once this choice of $\varepsilon$ has been determined, insert it into \eqref{eqn:general} to yield admissible computations for $C_i$ such that \eqref{eqn:dlvp_nf} is true with $T=1$.

We computed values following the preceding logic and using the polynomials $p_n$ of degree $n\in\{8,16,40,46\}$ from Kondrat\cprime ev \cite{Kondratev}, Mossinghoff--Trudgian \cite{MossinghoffTrudgian2015}, and Mossingoff--Trudgian--Yang \cite{MossinghoffTrudgianYang} (whose coefficients are presented in Tables \ref{table:coefficients2}-\ref{table:coefficients3}). In the end, there were two outcomes that might be considered the ``best'', depending on whether the reader places more importance on the constant $C_1$ or $C_2$ being minimised. If it is more important that $C_1$ is minimal, then we determined that $p_{46}$ is the best polynomial to choose. On the other hand, if it is more important that $C_2$ is minimal, then we determined that $p_{16}$ is the best polynomial to choose. Our results under these two choices (and $p_{40}$ for comparison) are presented in Tables \ref{tab:case1_results_16}, \ref{tab:case1_results_40}, and \ref{tab:case1_results_46}.

\section{Cases 2-4: complex zeros with small ordinates}\label{sec:low_lying_complex_zeros}

Bring forward all of the notations and set-up from Section \ref{sec:setup}. In this section, we introduce and deal with Cases 2-4, those which encapsulate $0 < |\gamma_0| \leq 1$. For convenience, suppose that
\begin{equation}\label{def-r&c}
    \sigma - 1 = \frac{r}{\mathscr{L}}
    \quad\text{and}\quad
    1 - \beta_0 = \frac{c}{\mathscr{L}}.
\end{equation}
Recall that we are working over $1<\sigma \leq 1 + \varepsilon$ for some $0 < \varepsilon < 0.15$, so $\mathscr{L}\ge\mathscr{L}_0$ and we naturally assert $$0< r < 0.15\mathscr{L}_0.$$

\subsection{Preliminary observations}

We require the following results from \cite[Eqns.~(2.21), (2.23)-(2.27), (2.32), and (2.33)]{kadiri2012explicit}:

\begin{lemma}\label{Lemma 2.3}
Let $\varrho_0 = \beta_0 + i \gamma_0$ be a non-trivial zero of $\zeta_L(s)$ such that $\beta_0 \geq 0.85$ and $\gamma_0 \geq 0$. For $k=1$ in Cases $2$ and $3$, one has
    \begin{equation}{\label{2.21}}
        \sideset{-}{^{\prime}}\sum_{\substack{\varrho\in Z_{L} \\ \beta \geq 1/2}} \left(F(\sigma + i\gamma_0,\varrho) - \kappa F(\sigma_1 + i\gamma_0,\varrho)\right) \leq  -\frac{1}{\sigma - \beta_0} .
    \end{equation}
    Let $\alpha_1 = 1.3951$. For $k \ne 1$ and $\gamma_0 \leq 1$ in Case $2$, one has
    \begin{equation}\label{2.23}
         \sideset{-}{^{\prime}}\sum_{\substack{\varrho\in Z_{L} \\ \beta \geq 1/2}} \left(F(\sigma+ ik\gamma_0,\varrho) - \kappa F(\sigma_1 + ik\gamma_0,\varrho)\right) \leq - \frac{\sigma- \beta_0}{(\sigma - \beta_0)^2 + (k-1)^2\gamma_0^2} + \alpha_1.
    \end{equation}
    For $k\ne 1$ and $\gamma_0 \leq 1$ in Case $3$, one has 
    \begin{align}\label{2.24}
     \begin{split}
        &\sideset{-}{^{\prime}}\sum_{\substack{\varrho\in Z_{L} \\ \beta \geq 1/2}} \left(F(\sigma + ik\gamma_0,\varrho) - \kappa F(\sigma_1+ ik\gamma_0 ,\varrho)\right) \\
        &\qquad\qquad\leq -\frac{\sigma - \beta_0}{(\sigma - \beta_0)^2 + (k-1)^2\gamma_0^2} -\frac{\sigma - \beta_0}{(\sigma - \beta_0)^2 + (k+1)^2\gamma_0^2} + 2\alpha_1 .        
       \end{split} 
   \end{align}
    Finally, for $\gamma_0 \leq 1$ in Cases $4$ and $5$, one has
    \begin{equation}\label{2.25}
        \sideset{-}{^{\prime}}\sum_{\substack{\varrho\in Z_{L} \\ \beta \geq 1/2}} \left(F(\sigma,\varrho) - \kappa F(\sigma_1,\varrho)\right) \leq -2 \frac{\sigma- \beta_0}{(\sigma - \beta_0)^2 + \gamma_0^2} + 2 \alpha_1.
    \end{equation}
\end{lemma}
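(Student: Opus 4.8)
\textbf{Proof plan for Lemma \ref{Lemma 2.3}.}

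The plan is to reduce everything to the explicit‐formula identity \eqref{eqn:explicit_formula} rewritten as a sum over zeros via the pairing $F(s,z)=\Re\big(\frac{1}{s-z}+\frac{1}{s-1+\bar z}\big)$ and the folding trick $\sideset{}{^{\prime}}\sum_{\beta\ge 1/2}=\tfrac12\sum_{\beta=1/2}+\sum_{1/2<\beta\le1}$, exactly as in the set-up. The starting point is the elementary positivity observation that each term $\Delta_F(k,\beta+i\gamma)=F(\sigma+ik\gamma_0,\beta+i\gamma)-\kappa F(\sigma_1+ik\gamma_0,\beta+i\gamma)$ is non-negative whenever $\sigma>1$; this is the content of the choices $\kappa=1/\sqrt5$ and $\sigma_1=\tfrac12(1+\sqrt{1+4\sigma^2})$ (it is where Ste\v{c}kin's smoothing enters, and I would cite \cite[Sec.~2.2]{kadiri2012explicit} for the verification that $\Delta_F\ge 0$). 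Consequently, in the folded sum over zeros we may discard all zeros except the ones we wish to exploit, retaining only a negative (hence helpful) contribution.

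First, for \eqref{2.21}: here $k=1$, so $kt=\gamma_0=\gamma_0$ lines up exactly with the imaginary part of the isolated zero $\varrho_0=\beta_0+i\gamma_0$. I would keep just that one zero (and, if $\gamma_0\ne 0$, note that $\beta_0>1/2$ so it is not counted with the extra factor $\tfrac12$), bound the contribution of all other zeros by $0$ from below using $\Delta_F\ge 0$, and compute the retained term directly: $\Delta_F(1,\varrho_0)=F(\sigma+i\gamma_0,\varrho_0)-\kappa F(\sigma_1+i\gamma_0,\varrho_0)\ge \frac{\sigma-\beta_0}{(\sigma-\beta_0)^2}=\frac{1}{\sigma-\beta_0}$, because at coincident ordinates the $(kt-\gamma_0)$ terms vanish and the remaining three summands are non-negative. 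Negating gives the bound $-\frac{1}{\sigma-\beta_0}$, as claimed.

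Next, for \eqref{2.23}–\eqref{2.25}: now $kt=k\gamma_0\ne\gamma_0$, so the isolated zero $\varrho_0$ no longer sits at the centre of the bell curve $\Delta_F$, but the point $\sigma+ik\gamma_0-\varrho_0$ has imaginary part $(k-1)\gamma_0$ (resp.\ $(k+1)\gamma_0$ for the conjugate zero $\bar\varrho_0$, which also lies in $Z_L$ with the same $\beta_0\ge1/2$). I would again discard all zeros except $\varrho_0$ in Case 2, or $\varrho_0$ and $\bar\varrho_0$ in Cases 3–5, retain the resulting negative terms $-\frac{\sigma-\beta_0}{(\sigma-\beta_0)^2+(k\mp1)^2\gamma_0^2}$, and then account for the \emph{lost} positive mass. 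The key point is that we are \emph{lower}-bounding $\sideset{}{^{\prime}}\sum\Delta_F$, so throwing away positive terms is not allowed for free — this is where $\alpha_1=1.3951$ comes in. Here one cannot simply drop everything; instead one uses that $\sideset{}{^{\prime}}\sum_{\beta\ge1/2}\Delta_F(k,\varrho)$ equals (via \eqref{eqn:explicit_formula}) an explicit expression involving $\tfrac12\log d_L$, the pole terms $F(s_k,1)$, and the gamma-factor term $\Re(\gamma_L'/\gamma_L)$, and that the whole thing is controlled; Kadiri's argument isolates a bound of the shape (retained negative terms) $+\,\alpha_1$ per pair of conjugate zeros by estimating the aggregate positive contribution of the remaining zeros together with the analytic terms — essentially the residual of the explicit formula after the coincident-ordinate case has been milked. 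The constant $\alpha_1$ is precisely the numerical value that makes this uniform over $\beta_0\ge0.85$, $0\le\gamma_0\le1$, and $1<\sigma<1.15$.

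The main obstacle, and the only genuinely delicate part, is the bookkeeping behind $\alpha_1$ in \eqref{2.23}–\eqref{2.25}: one must verify that after extracting the helpful terms at the shifted ordinates, the remaining sum over zeros plus the gamma/pole terms contributes at most $\alpha_1$ (resp.\ $2\alpha_1$) uniformly in the stated ranges of $\beta_0$, $\gamma_0$, $\sigma$. Since these estimates are exactly \cite[Eqns.~(2.21), (2.23)-(2.27), (2.32), (2.33)]{kadiri2012explicit}, I would not reproduce the optimisation; the proof is completed by citing that source, after checking that our conventions (the same $\kappa$, $\sigma_1$, $F$, and folding) match Kadiri's, which they do by construction of Section \ref{sec:setup}.
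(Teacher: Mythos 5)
Your endpoint---quoting these estimates from \cite{kadiri2012explicit} after checking that the conventions ($\kappa$, $\sigma_1$, $F$, the folded sum) agree---is exactly what the paper does: Lemma \ref{Lemma 2.3} is imported verbatim from Kadiri's Eqns.~(2.21), (2.23)--(2.25), and no independent proof is given here. So as a proof of record your proposal lands in the same place. However, two steps in your sketched mechanism are genuinely wrong and would derail anyone trying to reprove the lemma along your lines. First, for \eqref{2.21} the justification that ``the remaining three summands are non-negative'' is false: once the $(t-\gamma_0)$-terms vanish, what is left over beyond $\frac{1}{\sigma-\beta_0}$ is $\frac{1}{\sigma-1+\beta_0}-\kappa\bigl(\frac{1}{\sigma_1-\beta_0}+\frac{1}{\sigma_1-1+\beta_0}\bigr)$, in which the two smoothed terms carry a minus sign. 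Its non-negativity is not termwise; it is precisely the Ste\v{c}kin-type inequality built into the choices of $\kappa$ and $\sigma_1$ (Kadiri's Eqn.~(2.16), used later in this paper as \eqref{eqn:v_important}), and it is sharp in the limit $\sigma\to1^+$, $\beta_0\to 1^-$, so it cannot be waved through.

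Second, your account of $\alpha_1$ is not correct. Since each term $F(s_k,\varrho)-\kappa F(s_k',\varrho)$ with $\beta\ge\frac12$ is non-negative, discarding zeros \emph{is} free when lower-bounding the primed sum (you assert the opposite), and $\alpha_1$ cannot arise from ``the residual of the explicit formula'': both sides of \eqref{2.23}--\eqref{2.25} concern the zero sum alone and contain no $\log d_L$, $n_L$, pole, or gamma-factor terms---those are the separate quantities $S_2$, $S_3$, $S_4$, handled by Lemmas \ref{Lemma2.4} and \ref{Lemma 2.6}, so the mechanism you describe could not produce inequalities of this shape. In the cited argument, $\alpha_1$ compensates the non-principal part of the \emph{retained} zero's own contribution at the mismatched ordinate: for $k\ne1$ one keeps only the principal term $\frac{\sigma-\beta_0}{(\sigma-\beta_0)^2+(k-1)^2\gamma_0^2}$ from $F(s_k,\varrho_0)-\kappa F(s_k',\varrho_0)$ and bounds the remainder (the $s-1+\bar\varrho_0$ term together with the two $\kappa$-smoothed terms, all at height $(k-1)\gamma_0$) below by $-\alpha_1$, uniformly in the stated ranges of $\beta_0$, $\sigma$, $\gamma_0$; keeping the conjugate zero $\bar\varrho_0$ as well produces the second principal term at height $(k+1)\gamma_0$ and the second $\alpha_1$ in \eqref{2.24}--\eqref{2.25}. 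With these two corrections your plan matches the source; as written, the route you describe for \eqref{2.23}--\eqref{2.25} would not go through.
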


\begin{lemma}\label{Lemma2.4}
Set $\alpha_2 = 0.0215$ and $\alpha_3 = 1.5166.$   
Then one has
    \begin{equation}\label{2.26}
    F(\sigma,1) - \kappa F(\sigma_1,1) \leq \frac{1}{\sigma-1} + \alpha_2   .    
    \end{equation}
    If $k \geq 1$ and $0 < \gamma_0 < 1$, then
    \begin{equation}\label{2.27}
        F(\sigma + ik\gamma_0,1) - \kappa F(\sigma_1 + ik\gamma_0,1) \leq \frac{\sigma-1}{(\sigma-1)^2 + (k\gamma_0)^2} + \alpha_3.
    \end{equation}
\end{lemma}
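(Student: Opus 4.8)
The plan is first to observe that \eqref{2.26} and \eqref{2.27} are precisely \cite[Eqns.~(2.26)--(2.27)]{kadiri2012explicit}, so one legitimate route is simply to cite that reference; but both are short enough to reprove directly, which I would do for self-containedness. The only structural input is the quadratic relation defining $\sigma_1$: since $\sigma_1=\frac{1+\sqrt{1+4\sigma^2}}{2}$ is the positive root of $x^2-x-\sigma^2=0$, we have $\sigma_1(\sigma_1-1)=\sigma^2$ and $2\sigma_1-1=\sqrt{1+4\sigma^2}$, whence $\frac{1}{\sigma_1-1}+\frac{1}{\sigma_1}=\frac{2\sigma_1-1}{\sigma_1(\sigma_1-1)}=\frac{\sqrt{1+4\sigma^2}}{\sigma^2}$.

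For \eqref{2.26}, I would expand $F(\sigma,1)=\frac{1}{\sigma-1}+\frac{1}{\sigma}$ and $F(\sigma_1,1)=\frac{1}{\sigma_1-1}+\frac{1}{\sigma_1}$ from the definition $F(s,z)=\Re(\frac{1}{s-z}+\frac{1}{s-1+\bar{z}})$ with $z=1$, and use $\kappa=\frac{1}{\sqrt5}$ together with the identity above to get
\[ F(\sigma,1)-\kappa F(\sigma_1,1)=\frac{1}{\sigma-1}+h(\sigma),\qquad h(\sigma):=\frac{1}{\sigma}-\frac{\sqrt{1+4\sigma^2}}{\sqrt5\,\sigma^2}, \]
which is exactly the function $h$ appearing in Lemma~\ref{lem:S3}. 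It remains to check $h(\sigma)\le\alpha_2=0.0215$ on the range $1<\sigma<1.15$ dictated by the set-up. Since $h(1)=0$, it suffices to show $h$ is increasing there; clearing denominators, $h'(\sigma)>0$ reduces to the polynomial inequality $4+11\sigma^2-4\sigma^4>0$, which holds throughout $(1,1.15]$ (indeed for all $\sigma^2<\frac{11+\sqrt{185}}{8}$). Hence $h(\sigma)\le h(1.15)<0.0215$, giving \eqref{2.26}.

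For \eqref{2.27}, the same expansion at $s=\sigma+ik\gamma_0$ and at $s=\sigma_1+ik\gamma_0$ gives
\[ F(\sigma+ik\gamma_0,1)-\kappa F(\sigma_1+ik\gamma_0,1)=\frac{\sigma-1}{(\sigma-1)^2+(k\gamma_0)^2}+\frac{\sigma}{\sigma^2+(k\gamma_0)^2}-\kappa\Big(\frac{\sigma_1-1}{(\sigma_1-1)^2+(k\gamma_0)^2}+\frac{\sigma_1}{\sigma_1^2+(k\gamma_0)^2}\Big). \]
The bracketed sum is non-negative and $\kappa>0$, so dropping it only increases the right-hand side, and $\frac{\sigma}{\sigma^2+(k\gamma_0)^2}\le\frac{\sigma}{\sigma^2}=\frac{1}{\sigma}<1<1.5166=\alpha_3$ because $\sigma>1$. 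This proves \eqref{2.27} with substantial room to spare, which simply reflects that $\alpha_3$ is being carried over verbatim from \cite{kadiri2012explicit}.

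There is no genuine obstacle here: the statement amounts to an elementary calculation plus the quadratic relation defining $\sigma_1$. The only point needing mild care is the one-variable estimate for $h$ in \eqref{2.26} --- confirming the monotonicity reduction to $4+11\sigma^2-4\sigma^4>0$ and then pinning down $h(1.15)$ with enough numerical precision to be sure it lies below $0.0215$ (the gap is narrow, which is why $\alpha_2$ is taken slightly larger than $h(1.15)$ rather than equal to it).
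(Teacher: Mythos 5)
Your proposal is correct, and it is worth noting that the paper itself gives no proof of this lemma: it is imported verbatim (together with the surrounding lemmas) from \cite{kadiri2012explicit}, so the citation you mention at the outset \emph{is} the paper's argument, and your contribution is a self-contained verification. That verification checks out against the paper's definitions: with $z=1$ one has $F(s,1)=\Re\big(\tfrac{1}{s-1}+\tfrac{1}{s}\big)$; the relations $\sigma_1(\sigma_1-1)=\sigma^2$ and $2\sigma_1-1=\sqrt{1+4\sigma^2}$ give $F(\sigma,1)-\kappa F(\sigma_1,1)=\tfrac{1}{\sigma-1}+h(\sigma)$ with exactly the $h$ of Lemma \ref{lem:S3}; your reduction of $h'(\sigma)>0$ to $4+11\sigma^2-4\sigma^4>0$ is algebraically right (the positive root is at $\sigma^2=\tfrac{11+\sqrt{185}}{8}\approx 3.08$); and numerically $h(1.15)\approx 0.021470<0.0215=\alpha_2$, so \eqref{2.26} holds on the operative range $1<\sigma<1+\varepsilon\le 1.15$. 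Your insistence on that range is not pedantry but essential: $h$ keeps increasing past $1.15$ (e.g.\ $h(1.2)\approx 0.0258>\alpha_2$), so the restriction from the set-up must be invoked, as you do. For \eqref{2.27}, dropping the non-negative $\kappa$-term (legitimate since $\sigma_1>1$) and bounding $\tfrac{\sigma}{\sigma^2+(k\gamma_0)^2}\le \tfrac{1}{\sigma}<1<\alpha_3$ is valid, and you correctly observe that $\alpha_3=1.5166$ is simply carried over from \cite{kadiri2012explicit} rather than being sharp for this inequality. In short, your route buys self-containedness and makes visible how tight $\alpha_2$ is; the paper's route buys only brevity.
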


\begin{lemma}\label{Lemma 2.6}
For $k = 0$ or $\gamma_0 = 0$, one has
    \begin{equation}\label{2.32}
      \Re \Big(\frac{\gamma^{\prime}_{L}}{\gamma_{L}}(\sigma)-\kappa\frac{\gamma^{\prime}_{L}}{\gamma_{L}}(\sigma_{1}) \Big)
      \leq \Big( d(0) - \frac{(1-\kappa) \log \pi}{2} \Big) n_L,
    \end{equation}
where $d(0) = -0.0512$. 
If $0 < \gamma_0 \leq 1$ and $k \geq 1$, then
    \begin{equation}\label{2.33}
        \Re \Big(\frac{\gamma^{\prime}_{L}}{\gamma_{L}}(\sigma + ik\gamma_0)-\kappa\frac{\gamma^{\prime}_{L}}{\gamma_{L}}(\sigma_{1} + ik\gamma_0) \Big)
        \leq \Big(d(k) - \frac{(1-\kappa) \log{\pi}}{2} \Big) n_L,    
    \end{equation}
where $d(k) = \max \{ \mathfrak{d}(0,k), \mathfrak{d}(1,k) \}$ with $\mathfrak{d}(\cdot,k)$ the same as in Lemma \ref{lem:McCurley_L1_refined}. (Explicit values of $d(k)$, for $k\le 46$, can be calculated using Table \ref{tab:d(k)}.)
\end{lemma}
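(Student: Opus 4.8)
The plan is to reduce both inequalities to the digamma estimates of Lemma~\ref{lem:McCurley_L1_refined} (and, in the real case, to the one-variable bound recorded by Kadiri) via the logarithmic derivative of the gamma factor. First I would differentiate \eqref{eqn:gammaL} logarithmically and use $n_L=r_1+2r_2$ to obtain
\[
    \frac{\gamma_L'}{\gamma_L}(s)=-\frac{n_L}{2}\log\pi+\frac{r_1+r_2}{2}\,\frac{\Gamma'}{\Gamma}\Big(\frac{s}{2}\Big)+\frac{r_2}{2}\,\frac{\Gamma'}{\Gamma}\Big(\frac{s+1}{2}\Big),
\]
so that for any two points $w,w'$,
\[
    \Re\Big(\frac{\gamma_L'}{\gamma_L}(w)-\kappa\,\frac{\gamma_L'}{\gamma_L}(w')\Big)=-\frac{(1-\kappa)\,n_L}{2}\log\pi+(r_1+r_2)\,g_0(w,w')+r_2\,g_1(w,w'),
\]
where $g_\delta(w,w')=\tfrac12\Re\big(\tfrac{\Gamma'}{\Gamma}(\tfrac{w+\delta}{2})-\kappa\,\tfrac{\Gamma'}{\Gamma}(\tfrac{w'+\delta}{2})\big)$ for $\delta\in\{0,1\}$. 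Since $r_1+r_2\ge0$, $r_2\ge0$, and $(r_1+r_2)+r_2=n_L$, a non-negative weighted average of two numbers is at most the larger of them: if $g_\delta(w,w')\le D$ for both $\delta$, then the right-hand side is at most $n_L\big(D-\tfrac{1-\kappa}{2}\log\pi\big)$. It therefore remains to bound $g_0$ and $g_1$ in each regime.

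For \eqref{2.33} I would take $w=s_k=\sigma+ik\gamma_0$ and $w'=s_k'=\sigma_1+ik\gamma_0$ with $0<\gamma_0\le1$ and $k\ge1$; since in every application $k$ does not exceed the degree of the chosen polynomial, we may assume $1\le k\le46$, and Lemma~\ref{lem:McCurley_L1_refined} (applied with $t=\gamma_0$) gives $g_\delta(s_k,s_k')\le\mathfrak d(\delta,k)$ for $\delta\in\{0,1\}$. Inserting $D=d(k)=\max\{\mathfrak d(0,k),\mathfrak d(1,k)\}$ into the observation above yields \eqref{2.33}, and the explicit values of $d(k)$ follow from Table~\ref{tab:d(k)}.

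For \eqref{2.32} we have $k=0$ or $\gamma_0=0$, so $w=\sigma$ and $w'=\sigma_1$ are real and $g_\delta(\sigma,\sigma_1)=\tfrac12\big(\tfrac{\Gamma'}{\Gamma}(\tfrac{\sigma+\delta}{2})-\kappa\,\tfrac{\Gamma'}{\Gamma}(\tfrac{\sigma_1+\delta}{2})\big)$ is a function of the single variable $\sigma\in(1,1.15]$, with $\sigma_1=\tfrac12(1+\sqrt{1+4\sigma^2})$. The remaining task is the uniform estimate $g_\delta(\sigma,\sigma_1)\le d(0)=-0.0512$ for $\delta\in\{0,1\}$ on this range; this is \cite[Eqn.~(2.32)]{kadiri2012explicit}, and it can also be re-derived directly: since $\tfrac{\Gamma'}{\Gamma}$ has positive, decreasing derivative on $(0,\infty)$ and $\sigma\mapsto\sigma_1$ is increasing with $\kappa\,\sigma_1'<1$ on $(1,1.15]$, one checks that $g_\delta$ is increasing there, so it is enough to evaluate it at $\sigma=1.15$, where it is comfortably below $-0.0512$. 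Inserting $D=d(0)$ then gives \eqref{2.32}.

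The whole argument is essentially bookkeeping with the weights $r_1+r_2$ and $r_2$ together with Lemma~\ref{lem:McCurley_L1_refined} (itself the refinement of \cite[Lem.~2]{Mcc}, via Lemma~\ref{lem:McCurley_L2_refined}, that improves \eqref{2.33} over the corresponding bound in \cite{kadiri2012explicit}); the only step needing genuine care is the last uniform one-variable digamma estimate for \eqref{2.32}, which amounts either to the monotonicity check just described or to a direct appeal to \cite{kadiri2012explicit}.
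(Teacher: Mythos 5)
Your proposal is correct and is essentially the argument underlying the paper's statement: the paper imports \eqref{2.32}--\eqref{2.33} from Kadiri's work, with $d(k)$ updated via Lemma \ref{lem:McCurley_L1_refined}, and your write-up (logarithmic differentiation of \eqref{eqn:gammaL}, the nonnegative weights $r_1+r_2$ and $r_2$ summing to $n_L$, then the digamma bounds $\mathfrak{d}(\delta,k)$ for $k\geq 1$ and the one-variable estimate at $\sigma\leq 1.15$ for $k=0$ or $\gamma_0=0$) is precisely that reconstruction. The monotonicity check you sketch for the real case (using $\sigma_1>\sigma$, $\psi'$ positive decreasing, and $\kappa\sigma_1'<1$) is valid and does yield a value below $-0.0512$ at $\sigma=1.15$, so no gap remains.
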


\subsection{Case 2}\label{case2}

To begin, let $r$ and $c$ be taken as in \eqref{def-r&c}. Suppose further that
\begin{equation}\label{conditioncase2}
    0 < \frac{a_0}{a_1 - a_0}c < r < 1
    \quad\text{and}\quad 
    d_2 > \frac{\sqrt{r(r+c)}}{2} .
\end{equation}
The symmetry of the zeros with respect to the critical line enables one to write
\begin{align*}
    -\sum_{\rho = \beta+i\gamma \in Z_{L}}\Re\Big(\frac{1}{s_k-\rho}-\frac{\kappa}{s'_k -\rho}\Big)
    = -\sideset{}{^{\prime}}\sum_{\beta\geq\frac{1}{2}}(F(s_k, \rho)-\kappa F(s'_k, \rho)) ,
\end{align*}
where 
$
    \sum'_{\beta\geq\frac{1}{2}}=\frac{1}{2}\sum_{\beta =\frac{1}{2}}+\sum_{\frac{1}{2}<\beta\leq 1}.
$ 
Using \eqref{2.21} for $k=1$, \eqref{2.23} for $k = 0,2,3, \ldots, n$, and this relation, we have
\begin{align*}
    S_1
    &= - \sum_{k=0}^{n} a_k\sum_{\varrho\in Z_L}\Re\Big(\frac{1}{s_k-\varrho} - \frac{\kappa}{{s'_k}-\varrho}\Big) \\
    &\leq - \frac{a_1}{\sigma - \beta_0} - \frac{a_0(\sigma - \beta_0)}{(\sigma - \beta_0)^2 +  \gamma_0^2} -  \sum_{k = 2}^{n}  \frac{a_k(\sigma - \beta_0)}{(\sigma - \beta_0)^2 + (k-1)^2 \gamma_0^2}+ \alpha_1 a_0 + \alpha_1 \sum_{k = 2}^{n}a_k.
\end{align*}
Secondly, we apply Lemma \ref{Lemma2.4} (with \eqref{2.26} for $k=0$ and \eqref{2.27} otherwise) to obtain
\begin{align*}
    S_3 
    &= \sum_{k=0}^{n} a_k \left(F(s_k,1) -\kappa F({s'_k},1)\right) \\
    &\leq \frac{a_0}{\sigma - 1} + a_0 \alpha_2 + \sum_{k=1}^{n} \frac{a_k(\sigma - 1)}{(\sigma - 1)^2 + k^2\gamma_0^2} + \alpha_3 \sum_{k=1}^{n} a_k . 
\end{align*}
Thirdly, it follows from Lemma \ref{Lemma 2.6} (with \eqref{2.32} for $k=0$ and \eqref{2.33} otherwise) that
\begin{align*}
    S_4 
    = \sum_{k=0}^{n} a_k \Re \Big(\frac{\gamma_L'(s_k)}{\gamma_L(s_k)} - \kappa \frac{\gamma_L'({s'_k})}{\gamma_L({s'_k})} \Big) 
    \leq \sum_{k=0}^{n} a_k \Big(d(k) - \frac{1-\kappa}{2} \log{\pi}  \Big) n_L .
\end{align*}
Our computations for $d(k)$ confirm that the coefficient of $n_L$ is negative, so we have
\begin{equation}\label{S4-upper-bd}
    S_4 \leq \sum_{k = 0}^{n} a_k \Big( d(k) - \frac{1-\kappa}{2} \log{\pi}  \Big) n_0 .
\end{equation}
Therefore, \eqref{eqn:imp} and the above inequalities yield
\begin{equation}\label{3.3}
\begin{split}
    0 &\leq \frac{a_0}{\sigma - 1} - \frac{a_1}{\sigma - \beta_0} + \frac{a_1(\sigma-1)}{(\sigma-1)^2 + \gamma_0^2} - \frac{a_0(\sigma-\beta_0)}{(\sigma-\beta_0)^2 + \gamma_0^2} + \frac{1-\kappa}{2} \mathscr{L}\sum_{k=0}^{n}a_k \\ 
    &\qquad + \sum_{k=2}^{n} a_k \Big(\frac{\sigma-1}{(\sigma-1)^2 + k^2\gamma_0^2} - \frac{\sigma-\beta_0}{(\sigma-\beta_0)^2 + (k-1)^2\gamma_0^2}\Big) \\
    &\qquad +\alpha_1 a_0 + \alpha_1 \sum_{k=2}^{n} a_k + \alpha_2a_0 + \alpha_3 \sum_{k=1}^{n} a_k + \sum_{k =0}^{n} a_k \Big( d(k) - \frac{1-\kappa}{2} \log{\pi}  \Big) n_0 .
\end{split}
\end{equation}
Using these observations, we prove the following lemma.

\begin{lemma}\label{lem:hotel_room}
Suppose that $\gamma_0 \in ( \frac{d_2}{\mathscr{L}}, 1 )$. Let $\mathcal{U}(x)$ be the unit step function defined by
\begin{equation}\label{unit-step-func}
\mathcal{U}(x) = \begin{cases} 
    1  & \text{ if } x \geq 0, \\
    0 & \text{ otherwise,}
\end{cases}
\end{equation}
and set
\begin{equation}\label{A_0case2}
    A_{n_0} = \alpha_1 a_0 + \alpha_1 \sum_{k=2}^{n} a_k + \alpha_2a_0 + \alpha_3 \sum_{k=1}^{n} a_k  + \sum_{k = 0}^{n} a_k  \Big(- \frac{1-\kappa}{2} \log \pi + d(k)  \Big) n_0.
\end{equation}
We have
\begin{equation*}
    0 \leq 
    \mathscr{L}\Big(\frac{a_0}{r} - \frac{a_1}{r+c} + \frac{a_1r}{r^2 + d_2^2} - \frac{a_0(r+c)}{(r+c)^2 + d_2^2} +  \frac{1-\kappa}{2} \sum_{k=0}^{n} a_k +  \frac{\mathcal{U}(A_{n_0}) A_{n_0}}{\mathscr{L}}  \Big).
\end{equation*}
\end{lemma}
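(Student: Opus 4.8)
The plan is to start from inequality \eqref{3.3}, which already packages all of the bounds on $S_1$, $S_3$, and $S_4$ into a single inequality valid for the zero $\varrho_0 = \beta_0 + i\gamma_0$ in Case 2, and then to manipulate it into the stated form by (i) substituting the parametrisation \eqref{def-r&c}, namely $\sigma - 1 = r/\mathscr{L}$ and $1 - \beta_0 = c/\mathscr{L}$ (so $\sigma - \beta_0 = (r+c)/\mathscr{L}$), and (ii) discarding the terms that are manifestly non-positive under the hypotheses. Concretely, each quotient of the shape $\frac{\sigma-1}{(\sigma-1)^2 + k^2\gamma_0^2}$ becomes $\frac{\mathscr{L}\, r}{r^2 + k^2(\mathscr{L}\gamma_0)^2}$ and likewise for $\sigma - \beta_0$; so after multiplying through by $\mathscr{L}$, every term in \eqref{3.3} carries a factor of $\mathscr{L}$ except the final $A_{n_0}$-type constant, which I will handle with the unit step function.

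The main work is to justify dropping the sum $\sum_{k=2}^n a_k\big(\frac{\sigma-1}{(\sigma-1)^2+k^2\gamma_0^2} - \frac{\sigma-\beta_0}{(\sigma-\beta_0)^2+(k-1)^2\gamma_0^2}\big)$ and to bound the first four rational terms using $\gamma_0 > d_2/\mathscr{L}$. For the sum: since $0 < a_0 < a_1$ forces (via the degree-$n$ polynomial class $P_n$) a sign pattern, the point is that each summand is negative once $\gamma_0$ is bounded below and $\sigma - 1 < \sigma - \beta_0$; more precisely $x \mapsto \frac{x}{x^2 + b^2}$ is decreasing for $x > b$, and with $k \geq 2$ the shift from $(\sigma-1, k\gamma_0)$ to $(\sigma-\beta_0, (k-1)\gamma_0)$ both increases the numerator argument and decreases the denominator's $\gamma_0$-contribution, so the bracket is $\leq 0$ — I would verify this monotonicity/comparison claim carefully, as it is the crux. (Indeed $\frac{\sigma-1}{(\sigma-1)^2+k^2\gamma_0^2} \le \frac{\sigma-\beta_0}{(\sigma-\beta_0)^2 + k^2\gamma_0^2} \le \frac{\sigma-\beta_0}{(\sigma-\beta_0)^2+(k-1)^2\gamma_0^2}$ fails in general, so one must use $\sigma-1<\sigma-\beta_0$ together with the fact that $\frac{x}{x^2+b^2}$ is increasing on $[0,b]$ — the precise argument will depend on whether $\sigma - 1$ and $\sigma-\beta_0$ lie below or above the relevant $b=k\gamma_0$, and the conditions \eqref{conditioncase2} on $r$ and $d_2$ are exactly what pin this down.) For the four leading rational terms, I replace $\gamma_0$ by its lower bound $d_2/\mathscr{L}$: the terms $-\frac{a_1}{\sigma-\beta_0} = -\frac{a_1\mathscr{L}}{r+c}$ and $\frac{a_0}{\sigma-1} = \frac{a_0\mathscr{L}}{r}$ are exact, while in $\frac{a_1(\sigma-1)}{(\sigma-1)^2+\gamma_0^2}$ one uses that this is decreasing in $\gamma_0$ (for $\gamma_0$ in the relevant range, guaranteed by $d_2 > \tfrac{1}{2}\sqrt{r(r+c)}$, i.e. $\sigma - 1 < \gamma_0$), giving $\leq \frac{a_1 r/\mathscr{L}}{(r/\mathscr{L})^2 + (d_2/\mathscr{L})^2} = \frac{a_1 r \mathscr{L}}{r^2 + d_2^2}$; and in $-\frac{a_0(\sigma-\beta_0)}{(\sigma-\beta_0)^2 + \gamma_0^2}$ one needs an \emph{upper} bound on the subtracted positive quantity, i.e. a \emph{lower} bound, which follows from $\frac{x}{x^2+b^2}$ being decreasing in $b$ when $x < b$ (again the condition $d_2 > \tfrac12\sqrt{r(r+c)}$ ensures $\sigma - \beta_0 < \gamma_0$), so $\frac{a_0(\sigma-\beta_0)}{(\sigma-\beta_0)^2+\gamma_0^2} \geq \frac{a_0(r+c)\mathscr{L}}{(r+c)^2 + d_2^2}$, hence $-$ of it is $\leq -\frac{a_0(r+c)\mathscr{L}}{(r+c)^2+d_2^2}$.

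Finally, the additive constant $A_{n_0}$ defined in \eqref{A_0case2} collects precisely $\alpha_1 a_0 + \alpha_1\sum_{k\geq 2} a_k + \alpha_2 a_0 + \alpha_3\sum_{k\geq 1} a_k + n_0\sum_k a_k(-\tfrac{1-\kappa}{2}\log\pi + d(k))$, which is exactly the tail of \eqref{3.3} that does not carry a factor of $\mathscr{L}$. If $A_{n_0} \geq 0$ we keep it (contributing $A_{n_0} = \mathcal{U}(A_{n_0})A_{n_0}$, and writing it as $\mathscr{L}\cdot\frac{A_{n_0}}{\mathscr{L}}$ to match the displayed factoring); if $A_{n_0} < 0$ then dropping it only weakens the upper bound, and $\mathcal{U}(A_{n_0})A_{n_0} = 0$ records this. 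Assembling all the pieces and factoring out $\mathscr{L} > 0$ yields exactly the claimed inequality. The one genuine obstacle is the monotonicity bookkeeping for the $k\geq 2$ sum and the sign of the $-a_0(\sigma-\beta_0)/(\cdots)$ term: everything hinges on translating the hypotheses $\frac{a_0}{a_1-a_0}c < r < 1$ and $d_2 > \tfrac12\sqrt{r(r+c)}$ into the inequalities $\sigma - 1 < \gamma_0$ and $\sigma - \beta_0 < \gamma_0$ (and the consequent placement of the arguments relative to the turning point of $x \mapsto x/(x^2+b^2)$), so I would state and prove that translation first as a short preliminary observation before touching \eqref{3.3}.
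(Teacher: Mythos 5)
There is a genuine error in the central step. You bound the two $\gamma_0$-dependent leading terms \emph{separately} at $\gamma_0 = d_2/\mathscr{L}$, and for the subtracted term you claim $\frac{a_0(\sigma-\beta_0)}{(\sigma-\beta_0)^2+\gamma_0^2} \geq \frac{a_0(r+c)\mathscr{L}}{(r+c)^2+d_2^2}$. This is false in Case 2: for fixed $b=\sigma-\beta_0>0$ the map $\gamma_0 \mapsto \frac{b}{b^2+\gamma_0^2}$ is decreasing in $\gamma_0$ (this is exactly the monotonicity you invoke, "decreasing in $b$" in your notation), so for $\gamma_0 > d_2/\mathscr{L}$ it gives the \emph{reverse} inequality $\frac{a_0 b}{b^2+\gamma_0^2} \leq \frac{a_0(r+c)\mathscr{L}}{(r+c)^2+d_2^2}$; your claimed lower bound would require $\gamma_0 \leq d_2/\mathscr{L}$, which is the Case 3 situation (compare \eqref{6.7}), not the Case 2 hypothesis $\gamma_0 \in (\frac{d_2}{\mathscr{L}},1)$. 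Consequently the separate-bounding strategy cannot produce the stated inequality: the best a termwise bound can give for the negative term is its value at $\gamma_0$ near $1$, which is useless here. The correct route — and the paper's — is to bound the \emph{difference} $\frac{a_1(\sigma-1)}{(\sigma-1)^2+\gamma_0^2} - \frac{a_0(\sigma-\beta_0)}{(\sigma-\beta_0)^2+\gamma_0^2}$ jointly (Kadiri's Lemma~3.1(iii), the function $f_3$): the hypothesis $\frac{a_0}{a_1-a_0}c < r$ gives $a_1(\sigma-1) \geq a_0(\sigma-\beta_0)$, whence this difference is decreasing in $\gamma_0$, so evaluating at $\gamma_0 = d_2/\mathscr{L}$ yields simultaneously the $\frac{a_1 r}{r^2+d_2^2}$ and $-\frac{a_0(r+c)}{(r+c)^2+d_2^2}$ terms of the lemma. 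Your proposal never uses the joint structure, and the sign error above is precisely where it breaks.

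Your other worry, the discarding of the $k\geq 2$ sum, is in fact the unproblematic part and is where the hypothesis $d_2 > \frac{\sqrt{r(r+c)}}{2}$ enters: clearing denominators, the termwise inequality $\frac{\sigma-1}{(\sigma-1)^2+k^2\gamma_0^2} \leq \frac{\sigma-\beta_0}{(\sigma-\beta_0)^2+(k-1)^2\gamma_0^2}$ reduces to $ab(b-a) \leq \gamma_0^2\big(k^2 b - (k-1)^2 a\big)$ with $a=\frac{r}{\mathscr{L}}$, $b=\frac{r+c}{\mathscr{L}}$, and since $k^2 b - (k-1)^2 a \geq k^2(b-a) \geq \frac{4c}{\mathscr{L}}$ for $k\geq 2$ and $\gamma_0 > \frac{d_2}{\mathscr{L}}$, it suffices that $r(r+c) \leq 4d_2^2$, which is \eqref{conditioncase2}. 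The handling of $A_{n_0}$ via the unit step function and the final factoring out of $\mathscr{L}$ are fine; the proof stands or falls on replacing your separate bounds by the joint monotonicity argument.
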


\begin{proof}
The terms in the second row of \eqref{3.3} may be discarded, because
\begin{align*}
    \frac{\sigma-1}{(\sigma-1)^2 + k^2\gamma_0^2} - \frac{\sigma - \beta_0}{(\sigma-\beta_0)^2 +(k-1)^2\gamma_0^2} \leq 0
\end{align*}
for $k \in\{2,3,\ldots,n\}$; this observation extends \cite[Eqn.~(3.4)]{kadiri2012explicit}. Furthermore, \cite[Lem.~3.1(iii)]{kadiri2012explicit} tells us that if
\begin{equation*}
    a = \sigma-1 = \frac{r}{\mathscr{L}}, \quad
    b = \sigma - \beta_0 = \frac{r+c}{\mathscr{L}}, \quad
    q =  \frac{a_1}{a_0} , \quad\text{and}\quad
    x = \gamma_0 ,
\end{equation*}
then
\begin{align}\label{3.5}
\begin{split}
     \frac{a_1(\sigma-1)}{(\sigma-1)^2 + \gamma_0^2} - \frac{a_0(\sigma-\beta_0)}{(\sigma-\beta_0)^2 + \gamma_0^2} \leq a_0 f_3(a,b,q;x) ,
\end{split}
\end{align}
where $f_3(a,b,q;x)$ was defined on page 138 of \cite{kadiri2012explicit}. 
Moreover, $f_3(a,b,q;x)$ decreases as $x$ grows in the interval $(\frac{d_2}{\mathscr{L}},1)$, so
\begin{align*}
    f_3(a,b,q;x) \leq f_3\big(a,b,q;\frac{d_2}{\mathscr{L}}\big)
    = \Big(q\frac{r}{r^2 + d_2^2} - \frac{r+c}{(r+c)^2 + d_2^2}\Big) \mathscr{L}. 
\end{align*} 
Finally, the RHS of \eqref{3.3} is positive for $\gamma_0 \in ( \frac{d_2}{\mathscr{L}}, 1 )$. The result follows from this observation, \eqref{3.3}, and \eqref{3.5}.
\end{proof}

It follows from Lemma \ref{lem:hotel_room} and $\mathscr{L} \geq \mathscr{L}_0 > 0$ that
\begin{equation}\label{eqcase2}
    0 \leq  \frac{a_0}{r} - \frac{a_1}{r+c}  + \frac{a_1r}{r^2 + d_2^2}  - \frac{a_0(r+c)}{(r+c)^2 + d_2^2} + M_1 ,
\end{equation}
where 
\begin{equation*}
    M_1 = \frac{1-\kappa}{2} \sum_{k=0}^{n} a_k +  \frac{\mathcal{U}(A_{n_0}) A_{n_0}}{\mathscr{L}_0}.
\end{equation*} 
Therefore, \eqref{eqcase2} holds if and only if
\begin{equation*}
    \frac{a_1}{r+c} + \frac{a_0(r+c)}{(r+c)^2 + d_2^2} \leq \frac{a_0}{r} + \frac{a_1 r}{r^2 + d_2^2} + M_1,
\end{equation*}
which is equivalent to 
\begin{equation*}
    a_1 + a_0\Big(1 + \Big(\frac{d_2}{r+c}\Big)^2\Big)^{-1} \leq (r+c)\Big(\frac{a_0}{r} + \frac{a_1 r}{r^2 + d_2^2} + M_1\Big) .
\end{equation*}
It then follows that  
\begin{align*}
    c 
    &\geq \frac{a_1 +a_0\Big(1 + \Big(\frac{d_2}{r+c}\Big)^2\Big)^{-1} - \left(\frac{a_0}{r} + \frac{a_1 r}{r^2 + d_2^2} + M_1\right) r}{\frac{a_0}{r} + \frac{a_1 r}{r^2 + d_2^2} + M_1}, 
\end{align*}
where
$$ a_0\Big(1 + \Big(\frac{d_2}{r+c}\Big)^2\Big)^{-1}
>  a_0\bigg(1 + \bigg(\frac{d_2}{r+\frac{(a_1 - a_0) r}{a_0}}\bigg)^2\bigg)^{-1}
= a_0\Big(1 + \Big(\frac{a_0 d_2}{a_1 r}\Big)^2\Big)^{-1}.
$$
Hence, as $(1 - \beta_0)\mathscr{L} = c$, we have $ \beta_0 
    = 1 -\frac{c}{\mathscr{L}}$ and thus
\begin{align}
    \beta_0 
    &<  1 - \frac{1}{\mathscr{L}}\cdot r\cdot\frac{a_1 +
    a_0\Big(1 + \Big(\frac{a_0 d_2}{a_1 r}\Big)^2\Big)^{-1} - r \left(\frac{a_0}{r} + \frac{a_1 r}{r^2 + d_2^2} + M_1\right)}{a_0 + \frac{a_1}{1 + (d_2/r)^2} + M_1 r} 
    =: 1 - \frac{\tau_1(r,d_2)}{\mathscr{L}} . \label{eqn:for_comps_case2}
\end{align}

Table \ref{tab:Case2_optimised} presents choices of $r$, $d_2$, and one of the polynomials $p_n$ of degree $n\in\{8,16,40,46\}$ that were discussed in Section \ref{ssec:polynomial_choice} such that $\tau_1(r,d_2)^{-1}$ is minimised. To obtain these values, we used straightforward optimisation methods (up to five decimal places). Note that Tollis' verification work \cite{Tollis}, described in the introduction, means that we could safely assert 
\begin{equation}\label{eqn:TollisEtcLwrBound}
    d_L \geq 
    \begin{cases}
        400\,001 & \text{if $n_L = 2$;} \\
        240 & \text{if $n_L = 3$;} \\
        321 & \text{if $n_L = 4$;} \\
        d_{\text{min}}(n_L) & \text{if $n_L > 4$,} 
    \end{cases}
\end{equation}
in these computations. Recall that values for $d_{\text{min}}(n_L)$ are presented in Table \ref{Table-imp}. In the end, we expect the computations from this case to have the worst numerical outcome from Cases 2-4, so we fix the choices of $d_2$ associated to each $n_L$ from this case moving forwards. 

\begin{table}[]
    \centering
    \begin{tabular}{c|cccc}
        $n_L$ & polynomial & $r$ & $d_2$ & $\tau_1(r,d_2)^{-1}$ \\
        \hline
        $2$ & $p_{40}$ & $0.201$ & $0.70445$ & $16.018313$ \\
        $3$ & $p_{16}$ & $0.165$ & $0.56927$ & $19.552923$ \\
        $4$ & $p_{8}$ & $0.195$ & $0.68000$ & $16.722068$ \\
        $5$ & $p_{8}$ & $0.240$ & $0.83500$ & $13.712344$ \\
        $6$ & $p_{8}$ & $0.275$ & $0.91803$ & $11.781799$ \\
        $\geq 7$ & $p_{46}$ & $0.280$ & $0.98024$ & $11.519094$ 
    \end{tabular}
    \caption{Optimised parameter choices and computations for $\tau_1$.}
    \label{tab:Case2_optimised}
\end{table}


\subsection{Case 3}\label{case3}

Let $r$ and $c$ be taken as in \eqref{def-r&c} and suppose that
$
    0 < \frac{a_0}{a_1 - a_0}c < r < 1 .
$ 
Similar to Case $2$, using \eqref{2.21} for $k=1$, and \eqref{2.24} otherwise, we get
\begin{align*}
    S_1 
    &\leq - \frac{a_1}{\sigma - \beta_0} + 2\alpha_1 a_0  + 2\alpha_1 \sum_{k=2}^{n} a_k 
    - \frac{2a_0(\sigma - \beta_0)}{(\sigma - \beta_0)^2 + \gamma_0^2} \\
    &\qquad 
    - \sum_{k=2}^{n} a_k \Big( \frac{\sigma - \beta_0}{(\sigma-\beta_0)^2 + (k-1)^2 \gamma_0^2} + \frac{\sigma - \beta_0}{(\sigma-\beta_0)^2 + (k+1)^2 \gamma_0^2}\Big).
\end{align*}
Next, we apply Lemma \ref{Lemma2.4} (with \eqref{2.26} for $k=0$ and \eqref{2.27} otherwise) to deduce
\begin{equation*}
    S_3 \leq \frac{a_0}{\sigma-1} + a_0 \alpha_2 + \sum_{k = 1}^{n} \frac{a_k(\sigma - 1)}{(\sigma - 1)^2 + k^2\gamma_0^2} + \alpha_3 \sum_{k=1}^{n}a_k.
\end{equation*}
In addition, as argued in the previous section, applying Lemma \ref{Lemma 2.6} with \eqref{2.32} for $k=0$ and \eqref{2.33} otherwise, together with  our computations for $d(k)$ confirming that the coefficient of $n_L$ is negative, we again have the upper bound \eqref{S4-upper-bd} for $S_4$.

Combining all the above inequalities with \eqref{eqn:imp}, we deduce
\begin{align}
 \begin{split}
    0 \leq &\frac{a_0 }{\sigma-1} - \frac{a_1}{\sigma-\beta_0} + \frac{a_1(\sigma-1)}{(\sigma-1)^2 + \gamma_0^2} - \frac{2a_0(\sigma-\beta_0)}{(\sigma-\beta_0)^2 + \gamma_0^2}  + \frac{1-\kappa}{2} \Big(\sum_{k=0}^{n}a_k \Big) \mathscr{L} \\
    &+ \sum_{k=2}^{n} a_k \!\Big( \frac{\sigma - 1}{(\sigma-1)^2 + k^2\gamma_0^2} - \frac{\sigma - \beta_0}{(\sigma-\beta_0)^2 + (k-1)^2\gamma_0^2} - \frac{\sigma - \beta_0}{(\sigma-\beta_0)^2 + (k+1)^2\gamma_0^2}\Big)  \\
    & +2\alpha_1 a_0+ 2 \alpha_1 \sum_{k=2}^{n} a_k + \alpha_2 a_0 + \alpha_3 \sum_{k=1}^{n} a_k \label{3.8} 
    + \sum_{k = 1}^n a_k \Big(d(k) -\frac{1-\kappa}{2} \log{\pi} \Big) n_0 . 
    \end{split}
\end{align}
Here, $k \geq 2$ and $\gamma_0 \in (\frac{d_1}{\mathscr{L}},\frac{d_2}{\mathscr{L}})$, so we have
\begin{equation}\label{6.5}
\begin{split}
    &\frac{\sigma-1}{(\sigma-1)^2 + k^2\gamma_0^2} - \frac{\sigma-\beta_0}{(\sigma-\beta_0)^2 + (k-1)^2\gamma_0^2} - \frac{\sigma-\beta_0}{(\sigma-\beta_0)^2 + (k+1)^2\gamma_0^2} \\
    &\qquad\qquad\leq \Big( \frac{r}{r^2 + k^2d_1^2} - \frac{r+c}{(r+c)^2 + (k-1)^2d_2^2 } - \frac{r+c}{(r+c)^2 + (k+1)^2d_2^2 }\Big) \mathscr{L}.   
\end{split}
\end{equation}
\begin{remark}
    Observe that \eqref{3.8} closely mirrors the inequality given in \cite[Pg. 143, Eq.~(3.8)]{kadiri2012explicit}, except for the fifth term on the right of \cite[Eq.~(3.8)]{kadiri2012explicit}, which appears to be an error. In addition, \eqref{6.5} corresponds to the inequality stated below \cite[Eq.~(3.8)]{kadiri2012explicit},
  but we correct a typographical error there by placing $\mathscr{L}$ in the numerator (instead of having $\frac{1}{\mathscr{L}}$).
\end{remark}
Noticing that $r$ and $c$ satisfy the conditions we imposed earlier, similar to the derivation of \eqref{3.5}, as
$
    \frac{a_1(\sigma-1)}{(\sigma - 1)^2 + \gamma_0^2} - \frac{a_0 (\sigma - \beta_0)}{(\sigma-\beta_0)^2 + \gamma_0^2}
$ 
decreases with $\gamma_0 \in (\frac{d_1}{\mathscr{L}},\frac{d_2}{\mathscr{L}})$, we deduce
\begin{equation}\label{6.6}
    \frac{a_1(\sigma-1)}{(\sigma-1)^2 + \gamma_0^2} - \frac{a_0(\sigma-\beta_0)}{(\sigma-\beta_0)^2 + \gamma_0^2}
    \leq \Big( \frac{a_1r}{r^2 + d_1^2} - \frac{a_0(r+c)}{(r+c)^2 + d_1^2}\Big) \mathscr{L} .
\end{equation}
It also follows from $\gamma_0 \in (\frac{d_1}{\mathscr{L}},\frac{d_2}{\mathscr{L}})$ that
\begin{equation}\label{6.7}
    -\frac{a_0(\sigma-\beta_0)}{(\sigma-\beta_0)^2 + \gamma_0^2} 
    \leq - \Big(\frac{a_0(r+c)}{(r+c)^2 + d_2^2}
    \Big)\mathscr{L}.
\end{equation}


Now, the equation in \eqref{3.8} is positive for $\gamma_0 \in ( \frac{d_1}{\mathscr{L}}, \frac{d_2}{\mathscr{L}} )$, so we can combine \eqref{def-r&c}, \eqref{3.8}, \eqref{6.5}, \eqref{6.6}, and \eqref{6.7} to derive
\begin{align*}
    0 
    &\leq 
    \frac{a_0}{r} - \frac{a_1}{r+c} + \frac{a_1r}{r^2 + d_1^2} - \frac{a_0(r+c)}{(r+c)^2 + d_1^2} - \frac{a_0(r+c)}{(r+c)^2 + d_2^2}  + \frac{1-\kappa}{2} \sum\limits_{k=0}^{n} a_k \\
    &\qquad+ \sum\limits_{k=2}^{n} a_k \Big( \frac{r}{r^2 + k^2d_1^2} - \frac{r+c}{(r+c)^2 + (k-1)^2 d_2^2} - \frac{r+c}{(r+c)^2 + (k+1)^2 d_2^2}  \Big) +\frac{\mathcal{U}(B_{n_0}) B_{n_0}}{\mathscr{L}} ,
\end{align*}
where $\mathcal{U}(x)$ is the unit step function defined as in \eqref{unit-step-func} and
\begin{equation}\label{eqn:Bn}
    B_{n_0} = 2\alpha_1 a_0+ 2 \alpha_1 \sum_{k=2}^{n} a_k + \alpha_2 a_0 + \alpha_3 \sum_{k=1}^{n} a_k \\
    + \sum_{k = 1}^n a_k \Big(d(k) -\frac{1-\kappa}{2} \log{\pi}  \Big) n_0.
\end{equation} 
It follows from $\mathscr{L} \geq \mathscr{L}_0 > 0$ that
\begin{align}\label{eqcase3}
\begin{split}
    0 & \leq \frac{a_0}{r} - \frac{a_1}{r+c} + \frac{a_1r}{r^2 + d_1^2} - \frac{a_0(r+c)}{(r+c)^2 + d_1^2} - \frac{a_0(r+c)}{(r+c)^2 + d_2^2} \\
     &\quad  + \sum\limits_{k=2}^{n} a_k \Big( \frac{r}{r^2 + k^2d_1^2} - \frac{r+c}{(r+c)^2 + (k-1)^2 d_2^2} - \frac{r+c}{(r+c)^2 + (k+1)^2 d_2^2} \Big) + M_2,
\end{split} 
\end{align}
where 
\begin{equation*}
    M_2 = \frac{1-\kappa}{2} \sum\limits_{k=0}^{n} a_k + \frac{\mathcal{U}(B_{n_0}) B_{n_0}}{\mathscr{L}_0} .
\end{equation*}

By the condition $r+c \leq \frac{a_1}{a_0} r$, we must have
\begin{equation}\label{required}
    \frac{1}{1 + \frac{z_0}{(r+c)^2}} \leq \frac{1}{1+ \frac{z_0 a_0^2}{a_1^2 r^2}}.
\end{equation}
Therefore, the inequality \eqref{eqcase3} holds if the following is true:
\begin{align*}
    &a_1 + \frac{a_0}{1+ \frac{d_1^2 a_0^2}{r^2 a_1^2}} + \frac{a_0}{1+ \frac{d_2^2 a_0^2}{r^2 a_1^2}} + \sum\limits_{k=2}^{n} a_k Q(k,d_2,r;a_0,a_1) 
   \leq (r+c) \bigg(r \sum\limits_{k=0}^{n} \frac{a_k}{r^2 + k^2d_1^2} + M_2\bigg), 
\end{align*}
where
\begin{equation}\label{Qkda}
    Q(k,d,r;a_0,a_1) = 
\frac{1}{1 + \frac{(k-1)^2 d^2 a_0^2}{a_1^2 r^2}} 
+ \frac{1}{1 + \frac{(k+1)^2 d^2 a_0^2}{a_1^2 r^2}}.
\end{equation}
Clearly, the preceding inequality is satisfied if and only if 
\begin{equation*}
    c \ge
    \frac{a_1 + \frac{a_0}{1+ \frac{d_1^2 a_0^2}{r^2 a_1^2}} + \frac{a_0}{1+ \frac{d_2^2 a_0^2}{r^2 a_1^2}} + \sum\limits_{k=2}^{n} a_k Q(k,d_2,r;a_0,a_1) 
    - r \left( r \sum_{k=0}^n \frac{a_k}{r^2+k^2 d_1^2} + M_2 \right)}{r \sum_{k=0}^n \frac{a_k}{r^2+k^2 d_1^2} + M_2}.
\end{equation*}
Hence, as $1 - \beta_0 = \frac{c}{\mathscr{L}}$, it follows that
\begin{align}
    \beta_0 
    &\le
    1 - \frac{a_1 + \frac{a_0}{1+ \frac{d_1^2 a_0^2}{r^2 a_1^2}} + \frac{a_0}{1+ \frac{d_2^2 a_0^2}{r^2 a_1^2}} + \sum\limits_{k=2}^{n} a_k Q(k,d_2,r;a_0,a_1)  
    - r \left( r \sum_{k=0}^n \frac{a_k}{r^2+k^2 d_1^2} + M_2 \right)}{\left(r \sum_{k=0}^n \frac{a_k}{r^2+k^2 d_1^2} + M_2\right) \mathscr{L}} \nonumber\\
    &=: 1 - \frac{\tau_2(r,d_1,d_2)}{\mathscr{L}} . \label{eqn:for_comps_case3}
\end{align}
We defer the computations for this case until Section \ref{ssec:proof_small_t}, as our eventual choice of $d_1$ will also have an effect on the outcome in Case 4.


\subsection{Case 4}\label{case4}

We will utilise a different argument in this case. To begin, we note that
\begin{align}\label{fL-expansion}
 \begin{split}
    &f_L(\sigma, 0)\\
    &= - \sideset{}{^{\prime}}\sum_{\substack{\varrho\in Z_{L} \\ \beta \geq 1/2}} \left(F(\sigma,\varrho) - \kappa F(\sigma_1,\varrho)\right)+\frac{1-\kappa}{2}\mathscr{L}
    + F(\sigma,1) - \kappa F(\sigma_1,1) +\Re\Big(\frac{\gamma^{\prime}_{L}}{\gamma_{L}}(\sigma)-\kappa\frac{\gamma^{\prime}_{L}}{\gamma_{L}}(\sigma_{1})\Big) .
    \end{split}
\end{align}
It follows from Lemmas \ref{Lemma 2.3}, \ref{Lemma2.4}, and  \ref{Lemma 2.6} that
\begin{equation*}
\begin{split}
    f_L(\sigma, 0) 
    \leq 2\Big(\alpha_1 - \frac{\sigma- \beta_0}{(\sigma - \beta_0)^2 + \gamma_0^2}\Big) 
     + \frac{1-\kappa}{2}\mathscr{L} +  \frac{1}{\sigma -1} + \alpha_2 +  \Big(d(0) - \frac{(1-\kappa) \log \pi}{2} \Big) n_L.
\end{split}
\end{equation*}
Recall that $\sigma - 1 = \frac{r}{\mathscr{L}}$, $1 - \beta_0 = \frac{c}{\mathscr{L}}$, and note that the coefficient of $n_L$ in the above equation is negative. Therefore, $f_L(\sigma, 0) \geq 0$, $n_L \geq n_0$, and $\gamma_0 \leq \frac{d_1}{\mathscr{L}}$ imply
\begin{equation}\label{eqn:case4_convenient}
    0 \leq 
    \mathscr{L} \left( \frac{1}{r} - 2 \frac{r+c}{(r+c)^2 + d_1^2} + \frac{1 - \kappa}{2} + \frac{\mathcal{U}(C_{n_0}) C_{n_0}}{\mathscr{L}} \right) , 
\end{equation}
where $\mathcal{U}(x)$ is defined as in \eqref{unit-step-func} and
\begin{equation}\label{C0case4}
    C_{n_0} = 2\alpha_1 + \alpha_2 + n_0 \Big( d(0) - \frac{(1-\kappa) \log \pi}{2} \Big).
\end{equation}

Since $\mathscr{L} \geq \mathscr{L}_0 > 0$, the preceding discussion tells us
\begin{equation}\label{eqcase4}
    0 \leq \frac{1}{r} - 2 \frac{r+c}{(r+c)^2 + d_1^2} + M_3,
\end{equation}
in which
\begin{equation}\label{M}
    M_3 = \frac{1 - \kappa}{2} + \frac{\mathcal{U}(C_{n_0}) C_{n_0}}{\mathscr{L}_0}.
\end{equation}
Note that \eqref{eqcase4} is equivalent to 
\begin{equation*}
    c^2 + \Big( \frac{2 M_3 r^2}{1+M_3 r}\Big) c + \Big(d_1^2 + \frac{M_3 r^3 - r^2}{1+M_3 r}\Big) \geq 0.
\end{equation*}
As the roots of the above quadratic equation occur at
\begin{equation*}
    c = \frac{- M_3r^2 \pm \sqrt{r^2 - d_1^2(1+M_3r)^2}}{1 + M_3r},
\end{equation*}
the last inequality above holds when 
\begin{equation*}
    c \geq \frac{\sqrt{r^2 - d_1^2(1+M_3r)^2} - M_3r^2}{1 + M_3r},
\end{equation*}
which yields
\begin{equation}\label{eqn:for_comps_case4}
    \beta_0 \leq 1 - \bigg(\frac{\sqrt{r^2 - d_1^2(1+M_3r)^2} - M_3r^2}{1 + M_3r}\bigg) \frac{1}{\mathscr{L}}
    =: 1 - \frac{\tau_3(r,d_1)}{\mathscr{L}} .
\end{equation}


\subsection{Proof of Theorem \ref{thm:for|t|<1}}\label{ssec:proof_small_t}

Recall that the coefficients of $\mathscr{L}^{-1}$ in \eqref{eqn:for_comps_case2}, \eqref{eqn:for_comps_case3}, and \eqref{eqn:for_comps_case4} respectively, which we aim to maximise, are
\begin{align*}
    \tau_1(r,d_2) &= \frac{a_1 + a_0\Big(1 + \Big(\frac{a_0 d_2}{a_1 r}\Big)^2\Big)^{-1} - r \left(\frac{a_0}{r} + \frac{a_1 r}{r^2 + d_2^2} + M_1\right)}{\frac{a_0}{r} + \frac{a_1 / r}{1 + (d_2/r)^2} + M_1} , \\
    \tau_2(r,d_1,d_2) &= \frac{a_1 + \frac{a_0}{1+ \frac{d_1^2 a_0^2}{r^2 a_1^2}} + \frac{a_0}{1+ \frac{d_2^2 a_0^2}{r^2 a_1^2}} + \sum\limits_{k=2}^{n} a_k Q(k,d_2,r;a_0,a_1)  
    - r \left( r \sum_{k=0}^n \frac{a_k}{r^2+k^2 d_1^2} + M_2 \right)}{r \sum_{k=0}^n \frac{a_k}{r^2+k^2 d_1^2} + M_2} , \\
    \tau_3(r,d_1) &= \frac{\sqrt{r^2 - d_1^2(1+M_3r)^2} - M_3r^2}{1 + M_3r} .
\end{align*}
Equivalently, we are seeking $r$, $d_1$, $d_2$ such that the largest value of $\tau_1(r,d_2)^{-1}$, $\tau_2(r,d_1,d_2)^{-1}$, and $\tau_3(r,d_1)^{-1}$ is minimised. We have already presented optimised computations for $\tau_1(r,d_2)^{-1}$ in Table \ref{tab:Case2_optimised}, which we expect to be the worst case, so all that remains is to fix the values of $d_2$ presented in Table \ref{tab:Case2_optimised}, then choose $r$ and $d_1$ such that $\max\{\tau_2(r,d_1,d_2)^{-1}, \tau_3(r,d_1)^{-1}\}$ is minimal. For these computations, we choose the polynomials based on numerical experiments, and we implement the lower bound from \eqref{eqn:TollisEtcLwrBound} for $d_L$. Furthermore, we note that our choice of $r$ does not need to be the same when computing $\tau_1(r,d_2)^{-1}$, $\tau_2(r,d_1,d_2)^{-1}$, and $\tau_3(r,d_1)^{-1}$; the only parameters that need to be consistent across each case are $d_1$ and $d_2$. Moreover, to ensure that $\tau_3(r,d_1)$ is real, $d_1$ must be chosen so that
$
    r^2 \geq d_1^2(1+M_3r)^2 .
$

Finally, fixing the choices of $d_2$ presented in Table \ref{tab:Case2_optimised} and using the polynomial $p_{46}$, we have determined that the choices of $r$ and $d_1$ presented in Table \ref{tab:optimisations_Sourabh} give the best outcomes (up to four decimal places). Theorem \ref{thm:for|t|<1} mainly follows from these resulting computations; the only remaining part of the proof is to deal with the real zeros that will be handled in Subsection \ref{ssec:ez_upper_bound}.

\begin{remark}
The computations in Table \ref{tab:optimisations_Sourabh} also demonstrate how much improvement would be available in Theorem \ref{thm:for|t|<1} if future advancements enabled us to reduce the size of $\tau_1(r,d_2)^{-1}$. 
\end{remark}

\begin{table}[]
    \centering
    \begin{tabular}{c|ccccccc}
        $n_L$ & polynomial & $r_{A}$ & $r_{B}$ & $d_1$ & $d_2$ & $\tau_2(r_{A},d_1,d_2)^{-1}$ & $\tau_3(r_{B},d_1)^{-1}$ \\
        \hline
        2 & $p_{46}$ & 0.4827 & 1.2294 & 0.5353 & 0.70445 & 6.099311 & 6.099299 \\
        3 & $p_{16}$ & 0.3348 & 0.8220 & 0.4140 & 0.56927 &  9.968097 & 9.963504 \\
        4 & $p_{8}$ & 0.3590 & 0.8656 & 0.4768 & 0.68000 & 9.890467 & 9.890536 \\
        5 & $p_{8}$ & 0.4522 & 1.1072 & 0.6004 & 0.83500 & 7.981275 & 7.976893 \\
        6 & $p_{8}$ & 0.5545 & 1.3776 & 0.7122 & 0.91803 & 6.029385 & 6.026121 \\
        7 & $p_{46}$ & 0.6533 & 1.8187 & 0.8173 & 0.98024 & 4.595864 & 4.595872 \\
        $\geq 8$ & $p_{46}$ & 0.7190 & 1.9524 & 0.8492 & 0.98024 & 3.884018 & 3.884011 \\
    \end{tabular}
    \caption{Optimised parameter choices and computations for $\tau_2$ and $\tau_3$.}
    \label{tab:optimisations_Sourabh}
\end{table}

\subsection{Proof of Theorem \ref{thm:exp-zero-ZFR}}\label{section::exceptional}

Suppose that an exceptional zero, denoted $\beta_1$, exists such that
$
    \beta_1 \geq 1 - \frac{\nu}{\mathscr{L}} ,
$ where $\nu > 0$. Note that if $k=0$ and $\varrho = \beta + i\gamma \in Z_L$ such that $\gamma = 0$, then 
\begin{align*}
    F(s_k,\varrho)-\kappa F(s'_k,\varrho) 
    &= \frac{\sigma-\beta}{(\sigma-\beta)^2 + (kt-\gamma)^2} + \frac{\sigma-1+\beta}{(\sigma-1+\beta)^2 + (kt-\gamma)^2} \\
    &\qquad - \kappa \Big(\frac{\sigma_1-\beta}{(\sigma_1-\beta)^2 + (kt-\gamma)^2} + \frac{\sigma_1-1+\beta}{(\sigma_1-1+\beta)^2 + (kt-\gamma)^2}\Big) \\ 
    &= \frac{1}{\sigma-\beta} + \frac{1}{\sigma-1+\beta} - \kappa \Big(\frac{1}{\sigma_1-\beta} + \frac{1}{\sigma_1-1+\beta}\Big) .
\end{align*}
Hence, it follows from \cite[Eqn.~(2.16)]{kadiri2012explicit} that if $k=0$ and $\varrho = \beta + i\gamma \in Z_L$ such that $\gamma = 0$, then 
\begin{equation}\label{eqn:v_important}
    F(s_k,\varrho)-\kappa F(s'_k,\varrho) \geq \frac{1}{\sigma-\beta} .
\end{equation}
Repeating the arguments in Sections \ref{case2}-\ref{case4}, while using the bound \eqref{eqn:v_important} at the opportune moment to account for the extra negative contribution given by the exceptional zero $\beta_1$, we see that an isolated zero $\varrho_0 = \beta_0 + i\gamma_0 \in Z_L$ such that $0 < |\gamma_0| \leq 1$ will satisfy
\begin{align*}
    \beta_0 
    &< 1 - 
    \begin{cases}
        \frac{\widehat{\tau_1}(r,d_2,\nu)}{\mathscr{L}} &\text{if $\frac{d_2}{\mathscr{L}} < |\gamma_0| \leq 1$;}\\
        \frac{\widehat{\tau_2}(r,d_1,d_2,\nu)}{\mathscr{L}} &\text{if $\frac{d_1}{\mathscr{L}} < |\gamma_0| \leq \frac{d_2}{\mathscr{L}}$;}\\
        \frac{\widehat{\tau_3}(r,d_1,\nu)}{\mathscr{L}} &\text{if $0 < |\gamma_0| \leq \frac{d_1}{\mathscr{L}}$,}
    \end{cases}
\end{align*}
in which
\begin{align*}
    \widehat{\tau_1}(r,d_2,\nu) &= \frac{a_1 + a_0\Big(1 + \Big(\frac{a_0 d_2}{a_1 r}\Big)^2\Big)^{-1} - r \left(\frac{a_0}{r} - \frac{a_0}{r+\nu} + \frac{a_1 r}{r^2 + d_2^2} + M_1\right)}{\frac{a_0}{r} - \frac{a_0}{r+\nu} + \frac{a_1 / r}{1 + (d_2/r)^2} + M_1} , \\
    \widehat{\tau_2}(r,d_1,d_2,\nu) &= \frac{a_1 + \frac{a_0}{1+ \frac{d_1^2 a_0^2}{r^2 a_1^2}} + \frac{a_0}{1+ \frac{d_2^2 a_0^2}{r^2 a_1^2}} + \sum\limits_{k=2}^{n} a_k Q(k,d_2,r;a_0,a_1)}{r \sum_{k=0}^n \frac{a_k}{r^2+k^2 d_1^2} - \frac{a_0}{r+\nu} + M_2} - r , \\
    \widehat{\tau_3}(r,d_1,\nu) & = \frac{- \left( M_3r^2 + r \left( \frac{\nu}{r+\nu} - 1 \right) \right) + \sqrt{r^2 - d_1^2 \left(M_3r + \frac{\nu}{r+\nu} \right)^2}}{M_3r + \frac{\nu}{r+\nu}}.
\end{align*}
Again, we need to choose the parameters $r$, $d_1$, and $d_2$ such that the largest value of $\widehat{\tau_1}(r,d_2)^{-1}$, $\widehat{\tau_2}(r,d_1,d_2)^{-1}$, and $\widehat{\tau_3}(r,d_1)^{-1}$ is minimised. 

For the optimisation carried out in Section \ref{ssec:proof_small_t}, we observed that the best $\tau_1(r,d_2)^{-1}$ was greater than the optimal $\max  \{ \tau_2(r,d_1,d_2)^{-1}, \tau_3(r,d_1)^{-1} \}$ values, when computing with the same $d_2$. Thus, we optimised $\tau_1(r,d_2)^{-1}$ first and used the $d_2$ obtained in the process to compute optimal $\max  \{ \tau_2(r,d_1,d_2)^{-1}, \tau_3(r,d_1)^{-1} \}$. 

Here, the situation is different. We notice that the $d_2$ obtained from the best $\widehat{\tau_1}(r,d_2)^{-1}$ produces values of $\max  \{ \widehat{\tau}_2(r,d_1,d_2,\nu)^{-1}, \widehat{\tau}_3(r,d_1^{-1},\nu)  \}$ which are greater than $\widehat{\tau_1}(r,d_2)^{-1}$. Therefore, we adopt a different strategy to address this case. We optimise the parameters $r$, $d_1$, and $d_2$ together such that 
$\max \{ \widehat{\tau_1}(r,d_2,\nu)^{-1},\widehat{\tau}_2(r,d_1,d_2,\nu)^{-1}, \widehat{\tau}_3(r,d_1,\nu)^{-1} \}$ 
is as small as possible. 
The case $\nu = 0.5$ is given in Table \ref{tab:optimisations_nu=0.5} and the case $\nu = 0.05$ is given in Table \ref{tab:optimisations_nu=0.05}. We remark that in all these computations, the polynomial $p_4$ produced the best values. Finally, we note that Theorem \ref{thm:exp-zero-ZFR} is a natural consequence of the resulting computations, except for the case of zeros on the real line, which is handled in Section \ref{ssec:ez_upper_bound_except}.

\begin{table}[]
    \centering
    \begin{small}
    \begin{tabular}{c|cccccccc}
        $n_L$ & $r_A$ & $r_{B}$ & $r_{C}$ & $d_1$ & $d_2$ & $\hat{\tau}_1(r_{A},d_2,0.5)^{-1}$& $\hat{\tau}_2(r_{B},d_1,d_2,0.5)^{-1}$ & $\hat{\tau}_3(r_{C},d_1,0.5)^{-1}$ \\
        \hline
        2 & 0.38 & 1.11 & 1.93 & 1.82 & 1.82 & 6.036555 & 6.011471 & 5.990305 \\
        3 & 0.30 & 0.47 & 0.82 & 0.95 & 1.22 & 8.253321 & 8.251776 & 5.056856 \\
        4 & 0.36 & 0.57 & 0.86 & 1.06 & 1.27 & 6.422941 & 6.415110 & 2.951337 \\
        5 &  0.45 & 0.72 & 1.10 & 1.44 & 1.56 &  4.668822 & 4.659528 & 2.192861  \\
        6 & 0.53 & 0.80 & 1.37  & 1.85 & 1.85 & 3.659933 & 3.659357 & 1.458647 \\
        7 & 0.47 & 0.79 & 1.81 & 2.40 & 2.50 & 3.640973 & 3.644519 & 1.309188 \\
        8 & 0.54 & 0.94 & 2.10 &  2.71 & 2.93 &  3.576987 & 3.571527 & 1.217929 \\
        9 & 0.54 & 1.50 & 2.54 & 2.93 & 2.93 & 3.576987 & 3.550708 & 2.090838 \\
        10 & 0.54 & 1.78 & 2.82 & 2.93 & 2.93 & 3.576987 & 3.562360 & 1.007469 \\
        11 & 0.54 & 2.00 & 3.31 & 2.93 & 2.93 & 3.576987 & 3.542913 & 0.9535690 \\
        $\geq$ 12 & 0.54 & 2.08 & 3.60 & 2.93 & 2.93 & 3.576987 & 3.545561 & 1.035471 \\
    \end{tabular}
    \end{small}
    \caption{Optimised parameter choices and computations for $\hat{\tau}_1, \hat{\tau}_2$ and $\hat{\tau}_3$ with $\nu = 0.5$ and $p_4$.}
    \label{tab:optimisations_nu=0.5}
\end{table}

\begin{table}[]
    \centering
    \begin{small}
    \begin{tabular}{c|cccccccc}
        $n_L$ & $r_A$ & $r_{B}$ & $r_{C}$ & $d_1$ & $d_2$ & $\hat{\tau}_1(r_{A},d_2,0.05)^{-1}$& $\hat{\tau}_2(r_{B},d_1,d_2,0.05)^{-1}$ & $\hat{\tau}_3(r_{C},d_1,0.05)^{-1}$ \\
        \hline
        2 & 0.54 & 0.58 & 1.93 & 2.14 & 2.14 & 2.576574 & 2.565449 & 1.135791 \\
        3 & 0.42 & 0.45 & 0.82 & 1.12 & 1.12 & 3.316475 &  3.311881 & 0.5795893 \\
        4 & 0.49 & 0.50 & 0.86 & 1.16 & 1.16 & 2.851720 & 2.847493 & 0.4563488 \\
        5 & 0.61 & 0.61 & 1.10 & 1.41 & 1.41 & 2.312646 & 2.310428 & 0.3490578 \\
        6 & 0.71 & 0.75 & 1.37 & 1.61 & 1.61 & 1.960456 & 1.954740 & 0.2828832 \\
        7 & 0.75 & 0.79 & 1.81 & 2.27 & 2.27 & 1.855168 & 1.852961 & 0.2713828 \\
        8 & 0.76 & 0.91 & 2.10 & 2.70 & 2.70 & 1.830414 & 1.826330 & 0.2799502 \\
        9 & 0.77 & 0.78 & 2.54 & 3.52 & 3.90 & 1.806471 & 1.804384 & 0.9316547 \\
        10 & 0.77 & 0.95 & 2.82 & 3.53 & 3.91 & 1.806301 & 1.802006 & 1.030697 \\
        11 & 0.77 & 1.13 & 3.23 & 3.55 & 3.91 & 1.806301 & 1.798172 & 1.801953 \\
        $\geq$ 12 & 0.77 & 1.20 & 3.23 & 3.55 & 3.91 & 1.806301 & 1.798172 & 1.801953 \\
    \end{tabular}
    \end{small}
    \caption{Optimised parameter choices and computations for $\hat{\tau}_1, \hat{\tau}_2$ and $\hat{\tau}_3$ with $\nu = 0.05$ and $p_4$.}
    \label{tab:optimisations_nu=0.05}
\end{table}


\section{Case 5: bounds for the real zero}\label{upper_bound}

Bring forward all of the notations and set-up from Section \ref{sec:setup}, in this section, we will complete the proof of Theorems \ref{thm:for|t|<1}-\ref{thm:onrealline-exceptional}. In particular, results from Subsection \ref{ssec:ez_upper_bound} complete proofs of Theorems \ref{thm:for|t|<1} and \ref{thm:onrealline}, and results from Subsection \ref{ssec:ez_upper_bound_except} complete proofs of Theorems \ref{thm:exp-zero-ZFR} and \ref{thm:onrealline-exceptional}.

\subsection{Case 5: real zeros with no assumption on the existence of an exceptional zero}\label{ssec:ez_upper_bound}

To begin, suppose that $\sigma - 1 = \frac{r}{\mathscr{L}}$, $1 - \beta_0 = \frac{c}{\mathscr{L}}$, and $\zeta_L(s)$ admits two real zeros $\beta_0$ and $\beta_1$ such that  $\beta_0 \leq \beta_1$. Similar to Subsection \ref{case4}, by \eqref{fL-expansion}, Lemmas \ref{Lemma2.4}-\ref{Lemma 2.6}, \eqref{eqn:v_important} with $\varrho = \beta_0$ and $\varrho = \beta_1$, and the fact that
$
    \frac{1}{\sigma - \beta_0} + \frac{1}{\sigma - \beta_1} \geq \frac{2}{\sigma - \beta_0},
$
we derive
\begin{equation}
\begin{split}
    f_L(\sigma, 0) 
    &\leq - \frac{2}{\sigma - \beta_0} + \frac{1-\kappa}{2}\mathscr{L} +  \frac{1}{\sigma -1} + \alpha_2 +  \Big(d(0) - \frac{(1-\kappa) \log \pi}{2} \Big) n_L.
\end{split}
\end{equation}
Therefore, $f_L(\sigma,0) \geq 0$ and $\mathscr{L} \geq \mathscr{L}_0 > 0$ imply
\begin{align}
    0 
    &\leq - \frac{2}{\sigma - \beta_0} + \mathscr{L} \Big(\frac{1-\kappa}{2} + \frac{1}{r}\Big) + \alpha_2 +  \Big( d(0) - \frac{(1-\kappa) \log \pi}{2} \Big) n_L \nonumber\\
    &= \mathscr{L} \Big(\frac{1-\kappa}{2} + \frac{1}{r} - \frac{2}{r + c}\Big) + \alpha_2 +  \Big( d(0) - \frac{(1-\kappa) \log \pi}{2} \Big) n_L \nonumber\\
    &\leq \mathscr{L} \Big(\frac{1-\kappa}{2} + \frac{1}{r} - \frac{2}{r + c} + \frac{\mathcal{U}(D_{n_L}) D_{n_L}}{\mathscr{L}_0}\Big) , \label{eqn:ub_important_2}
\end{align}
where $\mathcal{U}(x)$ was defined in \eqref{unit-step-func} and 
\begin{equation}\label{D0case5}
    D_{n_0} = \alpha_2 + n_0 \Big( d(0) - \frac{(1-\kappa) \log \pi}{2} \Big).
\end{equation}
Rearranging \eqref{eqn:ub_important_2}, we see that
\begin{equation}\label{eqn:ub_important1}
    c \geq \eta(n_L,r) := \frac{2}{\frac{1-\kappa}{2} + \frac{1}{r} + \frac{\mathcal{U}(D_{n_L}) D_{n_L}}{\mathscr{L}_0}} - r .
\end{equation}

For each $n_L\in \{2,3,4,5,6,7\}$, optimised choices for $r$ and the resulting values of $\eta(n_L,r)$ are presented in Table \ref{tab:optimisations_ez}. Similar to our earlier computations, we asserted $d_L \geq 400\,001$ when $n_L = 2$, $d_L \geq 240$ when $n_L = 3$, $d_L \geq 321$ when $n_L = 4$, and $d_L \geq d_{\text{min}}(n_L)$ when $n_L > 4$. However, we must be careful to choose $r$ such that $r < 0.15\mathscr{L}_0$, as most of our computations require $\sigma < 1.15$. Therefore, our algorithm to find the optimal $r$ follows. 

\begin{algorithmic}
\State $r \gets 0.00001$
\While{$\eta(n_L,r) < \eta(n_L,r + 0.00001)$}
    \If{$r+0.00001 \leq 0.15\mathscr{L}_0$}
    \State $r \gets r + 0.00001$
    \EndIf
\EndWhile
\end{algorithmic}

\noindent It is also clear that the lower bound in \eqref{eqn:ub_important1} grows with $n_L \geq 7$, so $n_L \geq 7$ implies $c \geq \eta(7,r)$. These computations complete the proof of Theorem \ref{thm:for|t|<1}.


\begin{table}[]
    \centering
    \begin{tabular}{ccc}
        $n_L$ & $r$ & $\eta(n_L,r)^{-1}$ \\
        \hline
        2 & 1.49859 & 1.61094 \\
        3 & 0.822093 & 1.93173 \\
        4 & 0.865713 & 1.88178 \\
        5 & 1.10750 & 1.69958 \\
        6 & 1.37771 & 1.61857 \\
        $\geq$ 7 & 1.49859 & 1.61094
    \end{tabular}
    \caption{Optimised parameter choices and computations for $\eta(n_L,r)$.}
    \label{tab:optimisations_ez}
\end{table}

\subsection{Case 5: real zeros with exceptional zero}\label{ssec:ez_upper_bound_except} 

Assume that $\sigma - 1 = \frac{r}{\mathscr{L}}$, $1 - \beta_0 = \frac{c}{\mathscr{L}}$, and $\zeta_L(s)$ admits two real zeros $\beta_0$ and $\beta_1$ such that  $\beta_0 \leq \beta_1$. It follows from \eqref{fL-expansion},  Lemmas \ref{Lemma2.4}- \ref{Lemma 2.6}, and \eqref{eqn:v_important} with $\varrho = \beta_0$ and $\varrho = \beta_1$
that
\begin{equation*}
\begin{split}
    f_L(\sigma, 0) 
    &\leq -\Big( \frac{1}{\sigma - \beta_1} + \frac{1}{\sigma - \beta_0}\Big) + \frac{1-\kappa}{2}\log d_{L} +  \frac{1}{\sigma -1} + \alpha_2 +  \Big( - \frac{(1-\kappa) \log \pi}{2} + d(0) \Big) n_L.
\end{split}
\end{equation*}
Therefore, $f_L(\sigma,0) \geq 0$ and $\mathscr{L} \geq \mathscr{L}_0 > 0$ imply
\begin{align}
    0 
    &\leq - \Big( \frac{1}{\sigma - \beta_1} + \frac{1}{\sigma - \beta_0}\Big) + \mathscr{L} \Big(\frac{1-\kappa}{2} + \frac{1}{r}\Big) + \alpha_2 +  \Big( d(0) - \frac{(1-\kappa) \log \pi}{2} \Big) n_L \nonumber\\
    &= \mathscr{L} \Big(\frac{1}{r} - \frac{1}{r+\nu} -  \frac{1}{r+c} + \frac{1 - \kappa}{2}\Big) + \alpha_2 +  \Big( d(0) - \frac{(1-\kappa) \log \pi}{2} \Big) n_L \nonumber\\
    &\leq \mathscr{L} \Big(\frac{1-\kappa}{2} + \frac{1}{r} - \frac{1}{r+\nu} -  \frac{1}{r+c} + \frac{\mathcal{U}(D_{n_L}) D_{n_L}}{\mathscr{L}_0}\Big) , \label{eqn:ub_important}
\end{align}
where $\mathcal{U}(x)$ and  $D_{n_0}$ are defined as in \eqref{unit-step-func} and \eqref{D0case5}, respectively.
Rearranging \eqref{eqn:ub_important} then yields
\begin{equation}\label{eqn:ub_important1-exception}
    c \geq \widehat{\eta}(n_L,r,\nu) := \frac{1}{\frac{1-\kappa}{2} + \frac{1}{r} - \frac{1}{r + \nu} + \frac{\mathcal{U}(D_{n_L}) D_{n_L}}{\mathscr{L}_0}} - r .
\end{equation}
Finally, it remains to compute $\hat{\eta}(n_L,r,\nu)$. For $\nu = 0.5$ and $0.05$, we complete the computations with the same optimisation strategy mentioned beneath  \eqref{eqn:ub_important1}; the results are given in Table \ref{tab:optimisations_ez_nu=0.5&0.05}.

\begin{table}[]
    \centering
    \begin{tabular}{ccccc}
        $n_L$ & $r_1$ & $\hat{\eta}(n_L,r_1,0.5)^{-1}$ & $r_2$ & $\hat{\eta}(n_L,r_2,0.05)^{-1}$ \\
        \hline
        2 & 1.48920 & 1.32086 & 0.949128 & 0.478632 \\
        3 & 0.822090 & 1.86631 & 0.822090 & 0.483802 \\
        4 & 0.865709 & 1.77210 & 0.865709 & 0.480747 \\
        5 & 1.10750 & 1.45550 & 0.949128 & 0.478632 \\
        6 & 1.37770 & 1.33079 & 0.949128 & 0.478632 \\
        $\geq$ 7 & 1.48920 & 1.32086 & 0.949128 & 0.478632 \\
    \end{tabular}
    \caption{Optimised parameter choices and computations for $\widehat{\eta}(n_L,r,0.5)$ and $\widehat{\eta}(n_L,r,0.05)$.}
    \label{tab:optimisations_ez_nu=0.5&0.05}
\end{table}

\newpage

\bibliographystyle{amsplain}
\bibliography{refs}

\newpage

\appendix

\section{Tables}\label{app:tables}

Table \ref{tab:d(k)} provides the admissible values of $\mathfrak{d}(\delta,k)$ for Lemma \ref{lem:McCurley_L1_refined}. Tables \ref{tab:case1_results_16}, \ref{tab:case1_results_40}, and \ref{tab:case1_results_46} contain computations toward Theorem \ref{thm:large_ordinates}.

\begin{table}[H]
\centering
\begin{small}
\begin{tabular}{c|cc}
    $k$  & $\delta=0$ & $\delta= 1$ \\
    \hline
    1 & -0.199351128738030570 & -0.0813946816693186803 \\
    2 & 
    -0.00124241978210990096 & 0.0273794535972986669 \\
       3 & 
       0.114084432427194876 & 0.122300543665333397 \\
       4 & 0.193622937944706114 & 0.196875436840859980\\
       5 & 0.254905404299203220 & 0.256570417484008384 \\
       6 & 0.304939896564586199 & 0.305953879673129092 \\
       7 & 0.347272949065725700 & 0.347963044467412053 \\
       8 & 0.383977593776409942 & 0.384482381069023826 \\
       9 & 0.416381018169095007 & 0.416768771337744870 \\
       10 & 0.445387629210662161 & 0.445696071325147825\\
       11 & 0.471642652575186894 & 0.471894504563624251 \\
       12 & 0.495623005500675062 & 0.495832877025740915 \\
       13 & 0.517691340292122715 & 0.517869110940852861 \\
       14 & 0.538129924980734864 & 0.538282546011518703 \\
       15 & 0.557162798876846832 & 0.557295319763611574 \\ 
       16 & 0.574970772272345387 & 0.575086958395945369 \\
       17 & 0.591701881908478833 & 0.591804603327346634 \\
       18 & 0.607478862713224155 & 0.607570348191724730 \\
       19 & 0.622404603717508165 & 0.622486612036975084 \\
       20 & 0.636566207972667386 & 0.636640146614778746 \\
       21 & 0.650038064524389947 & 0.650105073828502023 \\
       22 & 0.662884207683184790 & 0.662945221743776680 \\
       23 & 0.675160153278594466 & 0.675215944805046542 \\
       24 & 0.686914345156188544 & 0.686965559046277541 \\
       25 & 0.698189307163083250 & 0.698236485988400823 \\
       26 & 0.709022569768260946 & 0.709066173368575803 \\
       27 & 0.719447422233358336 & 0.719487842956925361 \\
       28 & 0.729493528314488215 & 0.729531102997373782 \\ 
       29 & 0.739187434165484158 & 0.739222453638918475 \\
       30 & 0.748552990323224754 & 0.748585707026276914 \\
       31 & 0.757611704643699757 & 0.757642338768938384 \\
       32 & 0.766383039316073544 & 0.766411783808659042 \\
       33 & 0.774884662259901380 & 0.774911686912753805 \\
       34 & 0.783132661061269442 & 0.783158115891840700 \\
       35 & 0.79114172595177534 & 0.791165744003572424 \\
       36 & 0.798925307054014988 & 0.798948006734258587 \\
       37 & 0.806495750117107169 & 0.806517237157871714 \\
       38 & 0.813864414178669282 & 0.813884783290518610 \\
       39 & 0.821041773965830557 & 0.821061110238844649 \\
       40 & 0.828037509350176948 & 0.828055889446331683 \\
       41 & 0.834860583772007780 & 0.834878076944323810 \\
       42 & 0.841519313226652188 & 0.841535982193814758 \\
       43 & 0.848021427143618833 & 0.848037328843449179 \\
       44 & 0.854374122275479042 & 0.854389308516414592 \\
       45 & 0.860584110537916169 & 0.860598628564266499 \\
       46 & 0.866657661597714180 & 0.866671554581723291
   \end{tabular}
   \end{small}
\caption{Values of $\mathfrak{d}(\delta,k)$}
\label{tab:d(k)}
\end{table}

\begin{table}[H]
    \centering
    \begin{small}
    \begin{tabular}{c|ccccc}
        $n_0$ & $\varepsilon$ & $C_1$ & $C_2$ & $C_3$ & $C_4$ \\
        \hline
        $3$ & 0.1295 & 12.24107 & 9.53466 & -11.79351 & 4.7255 \\
        $4$ & 0.0735 & 12.24107 & 9.53466 & -12.06914 & 3.60145 \\
        $5$ & 0.0311 & 12.24107 & 9.53466 & -12.18761 & 2.7179 \\
        $6$ & 0.0240 & 12.24107 & 9.53466 & -12.20789 & 2.56754 \\
        $7$ & 0.0154 & 12.24107 & 9.53466 & -12.23261 & 2.38456 \\
        $8$ & 0.0131 & 12.24107 & 9.53466 & -12.23926 & 2.33546 \\
        $9$ & 0.0101 & 12.24107 & 9.53466 & -12.24795 & 2.27133 \\
        $10$ & 0.0091 & 12.24107 & 9.53466 & -12.25085 & 2.24992 \\
        $11$ & 0.0073 & 12.24107 & 9.53466 & -12.25608 & 2.21137 \\
        $12$ & 0.0067 & 12.24107 & 9.53466 & -12.25782 & 2.19851 \\
        $13$ & 0.0057 & 12.24107 & 9.53466 & -12.26073 & 2.17707 \\
        $14$ & 0.0053 & 12.24107 & 9.53466 & -12.26190 & 2.16848 \\
        $15$ & 0.0046 & 12.24107 & 9.53466 & -12.26394 & 2.15346 \\
        $16$ & 0.0044 & 12.24107 & 9.53466 & -12.26452 & 2.14917 \\
        $17$ & 0.0039 & 12.24107 & 9.53466 & -12.26598 & 2.13844 \\
        $18$ & 0.0037 & 12.24107 & 9.53466 & -12.26656 & 2.13414 \\
        $19$ & 0.0033 & 12.24107 & 9.53466 & -12.26773 & 2.12555 \\
        $20$ & 0.0032 & 12.24107 & 9.53466 & -12.26802 & 2.1234 \\
        $21$ & 0.0030 & 12.24107 & 9.53466 & -12.2686 & 2.11911
    \end{tabular}
    \end{small}
    \caption{Admissible computations for $C_1$, $C_2$, $C_3$, $C_4$ upon choosing the polynomial $p_{16}$ such that \eqref{eqn:dlvp_nf} holds with $T=1$.}
    \label{tab:case1_results_16}
\end{table}

\begin{table}[H]
    \centering
    \begin{small}
    \begin{tabular}{c|ccccc}
        $n_0$ & $\varepsilon$ & $C_1$ & $C_2$ & $C_3$ & $C_4$ \\
        \hline
        $3$ & 0.1250 & 12.21608 & 9.53979 & -11.63414 & 4.60646 \\
        $4$ & 0.0708 & 12.21608 & 9.53979 & -11.89009 & 3.52232 \\
        $5$ & 0.0305 & 12.21608 & 9.53979 & -12.0016 & 2.68709 \\
        $6$ & 0.0235 & 12.21608 & 9.53979 & -12.02138 & 2.53978 \\
        $7$ & 0.0152 & 12.21608 & 9.53979 & -12.04498 & 2.3643 \\
        $8$ & 0.0130 & 12.21608 & 9.53979 & -12.05127 & 2.31765 \\
        $9$ & 0.0100 & 12.21608 & 9.53979 & -12.05986 & 2.25394 \\
        $10$ & 0.0090 & 12.21608 & 9.53979 & -12.06273 & 2.23267 \\
        $11$ & 0.0072 & 12.21608 & 9.53979 & -12.0679 & 2.19437 \\
        $12$ & 0.0067 & 12.21608 & 9.53979 & -12.06934 & 2.18373 \\
        $13$ & 0.0056 & 12.21608 & 9.53979 & -12.07251 & 2.16029 \\
        $14$ & 0.0053 & 12.21608 & 9.53979 & -12.07337 & 2.1539 \\
        $15$ & 0.0046 & 12.21608 & 9.53979 & -12.07539 & 2.13898 \\
        $16$ & 0.0043 & 12.21608 & 9.53979 & -12.07625 & 2.13258 \\
        $17$ & 0.0039 & 12.21608 & 9.53979 & -12.07741 & 2.12405 \\
        $18$ & 0.0037 & 12.21608 & 9.53979 & -12.07798 & 2.11979 \\
        $19$ & 0.0033 & 12.21608 & 9.53979 & -12.07914 & 2.11125 \\
        $20$ & 0.0032 & 12.21608 & 9.53979 & -12.07943 & 2.10912 \\
        $21$ & 0.0030 & 12.21608 & 9.53979 & -12.0800 & 2.10485
    \end{tabular}
    \end{small}
    \caption{Admissible computations for $C_1$, $C_2$, $C_3$, $C_4$ upon choosing the polynomial $p_{40}$ such that \eqref{eqn:dlvp_nf} holds with $T=1$.}
    \label{tab:case1_results_40}
\end{table}

\begin{table}[H]
    \centering
    \begin{small}
    \begin{tabular}{c|ccccc}
        $n_0$ & $\varepsilon$ & $C_1$ & $C_2$ & $C_3$ & $C_4$ \\
        \hline
        $3$ & 0.1239 & 12.21124 & 9.54177 & -11.59548 & 4.57803 \\
        $4$ & 0.0701 & 12.21124 & 9.54177 & -11.84681 & 3.50267 \\
        $5$ & 0.0303 & 12.21124 & 9.54177 & -11.9567 & 2.67879 \\
        $6$ & 0.0234 & 12.21124 & 9.54177 & -11.97615 & 2.53379 \\
        $7$ & 0.0151 & 12.21124 & 9.54177 & -11.9997 & 2.35857 \\
        $8$ & 0.0129 & 12.21124 & 9.54177 & -12.00597 & 2.31198 \\
        $9$ & 0.0099 & 12.21124 & 9.54177 & -12.01454 & 2.24836 \\
        $10$ & 0.0089 & 12.21124 & 9.54177 & -12.0174 & 2.22713 \\
        $11$ & 0.0072 & 12.21124 & 9.54177 & -12.02227 & 2.19101 \\
        $12$ & 0.0067 & 12.21124 & 9.54177 & -12.02371 & 2.18038 \\
        $13$ & 0.0056 & 12.21124 & 9.54177 & -12.02686 & 2.15698 \\
        $14$ & 0.0053 & 12.21124 & 9.54177 & -12.02773 & 2.1506 \\
        $15$ & 0.0046 & 12.21124 & 9.54177 & -12.02974 & 2.1357 \\
        $16$ & 0.0043 & 12.21124 & 9.54177 & -12.0306 & 2.12931 \\
        $17$ & 0.0038 & 12.21124 & 9.54177 & -12.03204 & 2.11866 \\
        $18$ & 0.0037 & 12.21124 & 9.54177 & -12.03233 & 2.11653 \\
        $19$ & 0.0033 & 12.21124 & 9.54177 & -12.03348 & 2.10801 \\
        $20$ & 0.0032 & 12.21124 & 9.54177 & -12.03377 & 2.10588 \\
        $21$ & 0.0030 & 12.21124 & 9.54177 & -12.03434 & 2.10162
    \end{tabular}
    \end{small}
    \caption{Admissible computations for $C_1$, $C_2$, $C_3$, $C_4$ upon choosing the polynomial $p_{46}$ such that \eqref{eqn:dlvp_nf} holds with $T=1$.}
    \label{tab:case1_results_46}
\end{table}


\begin{table}[H]
\centering
\begin{small}
\begin{tabular}{cl|cl}
    $k$ & $a_k$ & $k$ & $a_k$ \\
    \hline 
    0 & 1 & 21  & $4.66702819061453\cdot 10^{-7}$ \\ 
    1 & 1.74600190914994        & 22 & $8.88183754657211\cdot 10^{-7}$ \\ 
    2 & 1.14055431833244        & 23 & $6.61799442215331\cdot 10^{-5}$ \\ 
    3 & 0.518966962914028       & 24 & $3.70153227317542\cdot 10^{-5}$ \\ 
    4 & 0.130885859164882       & 25 & $6.2332255794641\cdot 10^{-8}$  \\
    5 & $8.86418531143308\cdot 10^{-8}$ & 26 & $3.29243016002061\cdot 10^{-5}$ \\ 
    6 & $1.79787121328335\cdot 10^{-6}$ & 27 & $4.89938220699415\cdot 10^{-5}$ \\ 
    7 & 0.0137716529944408      & 28 & $1.50988491954013\cdot 10^{-5}$ \\ 
    8 & 0.00825900683475376     & 29 & $1.13051732969427\cdot 10^{-7}$ \\ 
    9 & $4.91544374578637\cdot 10^{-6}$ & 30 & $2.11823533257304\cdot 10^{-5}$ \\ 
    10  & $2.20263007866541\cdot 10^{-6}$ & 31 & $2.13859401551174\cdot 10^{-5}$ \\ 
    11 & 0.00243120523137902     & 32 & $1.55071932288034\cdot 10^{-6}$ \\ 
    12 & 0.00172926530269636     & 33 & $1.51812185041036\cdot 10^{-6}$ \\ 
    13 & $1.35500078722447\cdot 10^{-6}$ & 34 & $1.67615806595912\cdot 10^{-5}$ \\ 
    14 & $2.20879127662495\cdot 10^{-6}$ & 35 & $1.60031224178442\cdot 10^{-5}$ \\ 
    15 & 0.00069712400164774     & 36 & $3.94634065729451\cdot 10^{-6}$ \\ 
    16 & 0.000530583559753362    & 37 & $4.08859029078879\cdot 10^{-7}$ \\ 
    17 & $6.3973072524226\cdot 10^{-7}$  & 38 & $1.77819241241605\cdot 10^{-6}$ \\ 
    18 & $5.37323136636712\cdot 10^{-7}$ & 39 & $5.06885733758335\cdot 10^{-8}$ \\ 
    19 & 0.000234320877800568 & 40 & $7.50406436813653\cdot 10^{-9}$ \\
    20  & 0.000177364641910045 &   &  
\end{tabular}
\end{small}
\caption{Table of coefficients for Mossinghoff--Trudgian--Yang's polynomial $p_{40}(\varphi)\in P_{40}$.}
\label{table:coefficients2}
\end{table}

\begin{table}[H]
\centering
\begin{small}
\begin{tabular}{cl|cl}
    $k$ & $a_k$ & $k$ & $a_k$ \\
    \hline 
    0  & 1                        & 24 & 0.000127104592072581     \\
    1  & 1.74708744081848         & 25 & $1.74058423843506\cdot 10^{-7}$ \\
    2  & 1.14338015090023         & 26 & $6.156980223188\cdot 10^{-9  }$ \\
    3  & 0.521864216745001        & 27 & $7.4923012998548\cdot 10^{-5 }$ \\
    4  & 0.132187571762225        & 28 & $6.29610657045172\cdot 10^{-5}$ \\
    5  & $1.44250682908725\cdot 10^{-7}$  & 29 & $4.51492091998615\cdot 10^{-7}$ \\
    6  & $4.69075278525482\cdot 10^{-9}$  & 30 & $1.76696516341167\cdot 10^{-8}$ \\
    7  & 0.0141904926848435       & 31 & $3.57616762286565\cdot 10^{-5}$ \\
    8  & 0.00859097729886965      & 32 & $2.9356535048273\cdot 10^{-5 }$ \\
    9  & $5.05758761820625\cdot 10^{-7}$  & 33 & $2.6547976338407\cdot 10^{-7 }$ \\
    10 & $4.42284301054098\cdot 10^{-10}$ & 34 & $7.39578841754684\cdot 10^{-7}$ \\
    11 & 0.00262452919575262      & 35 & $1.5703528751761\cdot 10^{-5 }$ \\
    12 & 0.0018969952017721       & 36 & $1.16349907747152\cdot 10^{-5}$ \\
    13 & $4.69472495111911\cdot 10^{-10}$ & 37 & $1.01423339047177\cdot 10^{-7}$ \\
    14 & $2.18058618368512\cdot 10^{-7}$  & 38 & $1.71248131672039\cdot 10^{-6}$ \\
    15 & 0.000818384876659817     & 39 & $7.84636117271159\cdot 10^{-6}$ \\
    16 & 0.000639651965532567     & 40 & $5.93829512034697\cdot 10^{-6}$ \\
    17 & $3.11262094946825\cdot 10^{-8}$  & 41 & $9.47232309558493\cdot 10^{-7}$ \\
    18 & $7.74994211145798\cdot 10^{-7}$  & 42 & $4.84440446543232\cdot 10^{-8}$ \\
    19 & 0.000329183630974004     & 43 & $9.72548049252508\cdot 10^{-7}$ \\
    20 & 0.000268358318561904     & 44 & $8.45180184576162\cdot 10^{-7}$ \\
    21 & $4.43747297378809\cdot 10^{-7}$  & 45 & $2.25111200007826\cdot 10^{-7}$ \\
    22 & $1.87358718910571\cdot 10^{-7}$  & 46 & $6.56678999833493\cdot 10^{-10}$ \\
    23 & 0.000151428354073652     &    &    
\end{tabular}
\end{small}
\caption{Table of coefficients for Mossinghoff--Trudgian--Yang's polynomial $p_{46}(\varphi)\in P_{46}$.}
\label{table:coefficients3}
\end{table}

\end{document}